\newtheorem{lemma}{Lemma}[section]
\newtheorem{corollary}[lemma]{Corollary}
\newtheorem{theorem}[lemma]{Theorem}
\newtheorem{assumptions}[lemma]{Standing Assumptions}
\theoremstyle{definition} 
\newtheorem{definition}[lemma]{Definition}
\newtheorem{remark}[lemma]{Remark}
\newcommand{\Nat}{{\mathbb N}}
\newcommand\reals{{\mathbb R}}
\newcommand{\dg}{\sp{\text{\rm o}}}
\newcommand\bul{\noindent$\bullet\ $}
\begin{document}

\title{Banach synaptic algebras }

\author{David J. Foulis{\footnote{Emeritus Professor, Department of
Mathematics and Statistics, University of Massachusetts, Amherst,
MA; Postal Address: 1 Sutton Court, Amherst, MA 01002, USA;
foulis@math.umass.edu.}}\hspace{.05 in} and Sylvia
Pulmannov\'{a}{\footnote{ Mathematical Institute, Slovak Academy of
Sciences, \v Stef\'anikova 49, SK-814 73 Bratislava, Slovakia;
pulmann@mat.savba.sk. The second  author was supported by grant
VEGA No.2/0069/16.}}}

\date{}

\maketitle

\begin{abstract}
\noindent Using a representation theorem of Erik Alfsen, Frederic Schultz, and
Erling St{\o}rmer for special JB-algebras, we prove that a synaptic algebra
is norm complete (i.e., Banach) if and only if it is isomorphic to the
self-adjoint part of a Rickart C$\sp{\ast}$-algebra. Also, we give conditions
on a Banach synaptic algebra that are equivalent to the condition that it is
isomorphic to the self-adjoint part of an AW$\sp{\ast}$-algebra. Moreover, we
study some relationships between synaptic algebras and so-called generalized
Hermitian algebras.
\end{abstract}

\section{Introduction}

A synaptic algebra \cite{FSyn, FPproj, FPtype, FPsym, FPcom, FJP2proj, FJPpande,
vectlat, FJPstat, FJPMSR, Pid} (see Section \ref{sc:SAs} below) is a generalization
of several structures based on operator algebras. The adjective `synaptic' is
derived from the Greek word `sunaptein,' meaning to join together; indeed
synaptic algebras unite the notions of an order-unit normed space \cite
[p. 69]{Alf}, a real unital special Jordan algebra \cite{McC}, a convex
effect algebra \cite{GPBB}, and an orthomodular lattice \cite{Beran, Kalm}.

Important examples of synaptic algebras include the JW-algebras, the AJW-algebras,
and the spin factors of D. Topping \cite{Top65, FPSpin}, as well as the
ordered special Jordan algebras of T. Sarymsakov \emph{et al.} \cite{Sary}.
Also the generalized Hermitian algebras introduced and studied in
\cite{GHAlg1, GHAlg2} are synaptic algebras. Moreover, the self-adjoint
parts of von Neumann algebras \cite{Sakai}, Rickart C$\sp{\ast}$-algebras
\cite[\S 3]{HHL}, and AW$\sp{\ast}$-algebras \cite{Kap} are synaptic algebras.
Whereas most of the preceeding examples are Banach (i.e., norm-complete)
algebras, a synaptic algebra need not be norm-complete (e.g., \cite[Example
1.2]{FSyn}).

We shall be using results of Erik Alfsen, Frederic Schultz, and Erling
St{\o}rmer in \cite{AlfSchSto} pertaining to so-called JB-algebras to prove
that a synaptic algebra is isomorphic to a Rickart JC-algebra (i.e., the
self-adjoint part of a Rickart C$\sp{\ast}$-algebra of bounded linear
operators on a Hilbert space) if and only if it is a Banach algebra.

In Section \ref{sc:prelim} we review some basic definitions pertaining to
$\ast$-algebras, we recall the definition and some of the relevant properties
of a JB-algebra in Section \ref{sc:JB}, and in Section \ref{sc:SAs}, we present
the axioms SA1--SA8 for a synaptic algebra and remark on the extent to which
these axioms hold for a JB-algebra. In Theorem \ref{th:BanachSA}, we prove that
a Banach synaptic algebra is a special JB-algebra.  In Section \ref{sc:RepTh},
we relate the carrier and Rickart properties and prove that a synaptic algebra
is Banach if and only if it is isomorphic to the self-adjoint part of a Rickart
C$\sp{\ast}$-algebra (Theorem \ref{th:RBSA}). In Section \ref{sc:Additional},
we present some additional properties of a synaptic algebra, in Section \ref
{sc:blocks} we review some facts about blocks and C-blocks in a synaptic
algebra, and in Section \ref{sc:AW*}, we characterize Banach synaptic algebras
that are isomorphic to the self-adjoint part of an AW$\sp{\ast}$-algebra.
Finally, in Section \ref{sc:SA&GH}, we investigate some of the relationships
between synaptic algebras and so-called generalized Hermitian (GH) algebras.

\section{Preliminaries} \label{sc:prelim}

In this paper we use `iff' as an abbreviation for `if and only if,' the
notation `$:=$' means `equals by definition,' $\reals$ is the ordered field
of real numbers, and $\Nat:=\{1,2,3,...\}$ is the well-ordered set of
natural numbers.

A Jordan algebra $J$ \cite{McC} with Jordan product $\odot$ is \emph{unital}
iff there is a \emph{unit element} $1\in J$ such that $a\odot 1=a$ for all
$a\in J$. The Jordan algebra $J$ is \emph{special} iff it can be embedded
in an associative algebra $R$ in such a way that $a\odot b=\frac12(ab+ba)$
for all $a,b\in J$, where $ab+ba$ is calculated in $R$ \cite{McC}.

Let ${\mathcal C}$ be an associative algebra over the complex numbers. If
there is a unit element $1\in{\mathcal C}$ such that $1c=c1=c$ for all $c\in
{\mathcal C}$, then ${\mathcal C}$ is said to be \emph{unital}. If there is
an ``adjoint mapping" $c\mapsto c\sp{\ast}$ on ${\mathcal C}$ such that, for
all $c,d\in {\mathcal C}$ and for every complex number $\alpha$, (1) $(\alpha c)
\sp{\ast}={\bar{\alpha}}c\sp{\ast}$, (2) $(c+d)\sp{\ast}=c\sp{\ast}+d\sp{\ast}$,
(3) $(cd)\sp{\ast}=d\sp{\ast}c\sp{\ast}$, and (4) $(c\sp{\ast})\sp{\ast}=c$,
then ${\mathcal C}$ is called a \emph{$\ast$-algebra}. An \emph{isomorphism}
from one $\ast$-algebra to another is understood to be an algebra isomorphism
that preserves the adjoint mapping. If the $\ast$-algebras are unital, then
such an isomorphism automatically preserves $1$.

Let ${\mathcal C}$ be a $\ast$-algebra. Then an element $c\in{\mathcal C}$
is said to be \emph{self-adjoint} iff $c=c\sp{\ast}$, and a self-adjoint
idempotent $p=p\sp{\ast}=p\sp{2}\in{\mathcal C}$ is called a \emph{projection}.
We denote by ${\mathcal C}\sp{sa}$ the set of all self-adjoint elements in
${\mathcal C}$ and we refer to ${\mathcal C}\sp{sa}$ as the \emph{self-adjoint
part} of ${\mathcal C}$. Clearly, if $c\in{\mathcal C}$, then $c+c\sp{\ast}$,
$\frac{1}{i}(c-c\sp{\ast})$, $cc\sp{\ast}, c\sp{\ast}c\in{\mathcal C}\sp{sa}$,
and if ${\mathcal C}$ is unital, then $1\in{\mathcal C}\sp{sa}$. Moreover,
${\mathcal C}\sp{sa}$ is a special real Jordan algebra under the Jordan product
$c\odot d:=\frac12(cd+dc)$ for all $c,d\in{\mathcal C}\sp{sa}$. Also, if
${\mathcal C}$ is unital, then the Jordan algebra ${\mathcal C}\sp{sa}$ is
unital.

An (abstract) unital C$\sp{\ast}$-\emph{algebra} is defined to be a unital
Banach ${\ast}$-algebra ${\mathcal C}$ such that the norm satisfies
$\|cc\sp{\ast}\|=\|c\|\sp{2}$ for all $c\in{\mathcal C}$. In what follows,
\emph{we shall consider only unital C$\sp{\ast}$ algebras; thus we shall
omit the adjective ``unital."} An \emph{isomorphism} of one
C$\sp{\ast}$-algebra onto another is understood to be an isomorphism of
${\ast}$-algebras that is also an isometry.

\begin{lemma} \label{lm:cc*}
Let ${\mathcal C}$ be a C$\sp{\ast}$-algebra, let $c\in{\mathcal C}$, and
let $p=p\sp{2}$ be a projection in ${\mathcal C}$. Then{\rm: (i)} $cc
\sp{\ast}=0\Leftrightarrow c=0$. {\rm (ii)} $cc\sp{\ast}=pcc\sp{\ast}
\Leftrightarrow c=pc$.
\end{lemma}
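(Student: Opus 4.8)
The plan is to derive both statements directly from the C$\sp{\ast}$-norm identity $\|cc\sp{\ast}\|=\|c\|\sp 2$, using (i) as a lemma for (ii).

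For part (i), the reverse implication is trivial, and the forward implication is immediate: if $cc\sp{\ast}=0$ then $\|c\|\sp 2=\|cc\sp{\ast}\|=0$, hence $c=0$ since the norm is definite. No obstacle here.

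For part (ii), the forward-to-backward direction ($c=pc\Rightarrow cc\sp{\ast}=pcc\sp{\ast}$) is a one-line computation: $cc\sp{\ast}=(pc)c\sp{\ast}=p(cc\sp{\ast})$. For the converse, assume $cc\sp{\ast}=pcc\sp{\ast}$ and set $d:=c-pc=(1-p)c$, where $1-p$ is again a projection. Then $dd\sp{\ast}=(1-p)cc\sp{\ast}(1-p)\sp{\ast}=(1-p)cc\sp{\ast}(1-p)$, and the hypothesis gives $(1-p)cc\sp{\ast}=cc\sp{\ast}-pcc\sp{\ast}=0$, so $dd\sp{\ast}=0$. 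Applying part (i) to $d$ yields $d=0$, i.e., $c=pc$.

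I expect no genuine obstacle; the only point requiring a little care is keeping track of adjoints when manipulating $pc$ and $(1-p)c$ (using $p\sp{\ast}=p$), and remembering that $1-p$ is a projection so that the C$\sp{\ast}$-identity and part (i) apply to it as well. The whole argument is a routine application of the C$\sp{\ast}$-identity.
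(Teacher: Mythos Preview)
Your proof is correct and essentially identical to the paper's: both use the C$\sp{\ast}$-identity for part (i), and for the nontrivial direction of (ii) both compute $(1-p)c\bigl[(1-p)c\bigr]\sp{\ast}=(1-p)cc\sp{\ast}(1-p)$, observe it vanishes by the hypothesis, and then invoke (i). One small remark: part (i) applies to \emph{any} element of $\mathcal{C}$, so there is no need for $1-p$ to be a projection in order to apply it to $d=(1-p)c$.
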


\begin{proof}
(i) $cc\sp{\ast}=0\Leftrightarrow 0=\|cc\sp{\ast}\|=\|c\|\sp{2}
\Leftrightarrow c=0$. (ii) Suppose that $cc\sp{\ast}=pcc\sp{\ast}$.
Then $(1-p)p=0$, so $(1-p)c[(1-p)c]\sp{\ast}=(1-p)cc\sp{\ast}(1-p)=
(1-p)pcc\sp{\ast}(1-p)=0$, whence by (i), $(1-p)c=0$, i.e., $c=pc$.
The converse is obvious.
\end{proof}

If ${\mathfrak H}$ is a complex Hilbert space then ${\mathcal B}({\mathfrak H})$
denotes the unital $\ast$-algebra of all bounded linear operators on ${\mathfrak H}$
under the formation of operator adjoints and with the uniform operator norm.
A norm-closed $\ast$-subalgebra ${\mathcal C}$ of ${\mathcal B}({\mathfrak H})$
with $1\in{\mathcal C}$ can be shown to be a C$\sp{\ast}$-algebra and as such,
it is called a \emph{concrete} C$\sp{\ast}$-\emph{algebra}. By a classical result
of I. Gelfand and M. Neumark \cite{GandN}, every C$\sp{\ast}$-algebra is isomorphic
to a concrete C$\sp{\ast}$-algebra.

If ${\mathcal C}$ is a $C\sp{\ast}$-algebra, then the self-adjoint part
${\mathcal C}\sp{sa}$ of ${\mathcal C}$ is not only a special real unital Jordan
algebra, but it turns out to be a JB-algebra as per Section \ref{sc:JB} below.
By definition, a \emph{JC-algebra} is the self-adjoint part ${\mathcal C}\sp{sa}$
of a concrete ${\mathcal C}\sp{\ast}$-algebra, i.e., it is a norm closed
unital special Jordan algebra of self-adjoint operators on a complex Hilbert space
\cite{Top65}. Thus, a JC-algebra is also a JB-algebra.

Both JB-algebras and synaptic algebras are so-called order-unit normed spaces,
according to the following definition.

\begin{definition} [{\cite[pp. 67--69]{Alf}}] \label{df:OUNS}
An \emph{order-unit space}  is a real partially ordered vector space $V$
with a positive cone $V\sp{+}:=\{v\in V:0\leq v\}$ and with a distinguished
element $1\in V\sp{+}$, called the \emph{order unit} such that:
\begin{enumerate}
\item[(1)] For every $v\in V$, there exists $n\in\Nat$ such that $v\leq n1$.
\end{enumerate}
An \emph{order-unit normed space} with order unit $1$ is defined to be an
order-unit space $V$ with order unit $1$ such that:
\begin{enumerate}
\item[(2)] $V$ is \emph{archimedean}, i.e., if $v,w\in V$ and $nv\leq w$
 for all $n\in\Nat$, then $-v\in V\sp{+}$ (equivalently, $v\leq 0$).
\item[(3)] The \emph{order-unit norm} $\|\cdot\|$ on $V$ is defined by
 $\|v\| :=\inf\{\lambda\in\reals:0<\lambda\text{\ and\ }-\lambda1\leq v
 \leq\lambda1\}$.
\end{enumerate}
\end{definition}

\section{JB algebras} \label{sc:JB}

In this section we review the definition, some notation, and some facts
pertaining to JB-algebras as per \cite{AlfSchSto}.

\begin{definition} [{\cite[page 13]{AlfSchSto}}] \label{df:JBalg}
A \emph{JB-algebra} $B$ is both an order-unit space with order unit $1\in B$
and a (not necessarily special) unital Jordan algebra over $\reals$.  Moreover,
$B$ is a Banach space under a norm $\|\cdot\|$ that satisfies the following
conditions for all $b,c\in B$:
\begin{enumerate}
\item[(1)] $\|b\odot c\|\leq\|b\|\|c\|$.
\item[(2)] $\|b\sp{2}\|=\|b\|\sp{2}$. (Note: $b\sp{2}:=b\odot b$.)
\item[(3)] $\|b\sp{2}\|\leq\|b\sp{2}+c\sp{2}\|$.
\end{enumerate}
\end{definition}

\begin{theorem} [{\cite[Theorem 2.1]{AlfSchSto}}] \label{th:JB}
\ \newline
\indent Let $B$ be a JB-algebra. Then{\rm:}
\begin{enumerate}
\item $B$ is a {\rm(}not necessarily special{\rm)} real Jordan algebra
 with  Jordan product $\odot$, and $b\odot 1=b$ for all $b\in B$.
\item $B$ is an order-unit normed space with order unit $1$ and the
 given norm $\|\cdot\|$ on $B$ is the order-unit norm.
\item The positive cone in $B$ satisfies $B\sp{+}:=\{b\in B:0\leq b\}
 =\{b\sp{2}:b\in B\}$.
\item Under $\|\cdot\|$, $B$ is a norm-complete {\rm(}i.e., a Banach{\rm)}
 space.
\item For all $b\in B$, $-1\leq b\leq 1\Rightarrow 0\leq b\sp{2}\leq 1$.
\end{enumerate}

Conversely, if $B$ is a norm-complete order-unit normed space with order unit
$1$ and also a {\rm(}not necessarily special{\rm)} real unital Jordan algebra,
and if condition {\rm(v)} above holds, then $B$ is a JB-algebra under the
order-unit norm.
\end{theorem}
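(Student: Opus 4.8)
The plan is to verify the three norm conditions of Definition~\ref{df:JBalg} for the order-unit norm $\|\cdot\|$ on $B$; the rest of what that definition demands --- that $B$ is a norm-complete real unital Jordan algebra (its Jordan unit being the order unit $1$) and an order-unit normed space --- is exactly the hypothesis. So I must show, for all $b,c\in B$: (1) $\|b\odot c\|\leq\|b\|\,\|c\|$, (2) $\|b\sp{2}\|=\|b\|\sp{2}$, and (3) $\|b\sp{2}\|\leq\|b\sp{2}+c\sp{2}\|$.

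First I would extract from hypothesis (v) the two facts that carry most of the weight. For $b\in B$ and $\epsilon>0$ we have $-(\|b\|+\epsilon)1\leq b\leq(\|b\|+\epsilon)1$, so (v) applied to $b/(\|b\|+\epsilon)$ gives $0\leq b\sp{2}\leq(\|b\|+\epsilon)\sp{2}1$; letting $\epsilon\downarrow0$ yields $b\sp{2}\geq0$ and $\|b\sp{2}\|\leq\|b\|\sp{2}$. Condition (3) is then immediate, since $c\sp{2}\geq0$ forces $0\leq b\sp{2}\leq b\sp{2}+c\sp{2}$ and, in any order-unit normed space, $0\leq x\leq y$ implies $\|x\|\leq\|y\|$. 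For (1) I would reduce by homogeneity to $\|b\|=\|c\|=1$, so that $-1\leq b,c\leq1$; then $(b+c)\sp{2}\geq0$ and $(b-c)\sp{2}\geq0$ combine to give $-\tfrac12(b\sp{2}+c\sp{2})\leq b\odot c\leq\tfrac12(b\sp{2}+c\sp{2})$, and since (v) gives $0\leq b\sp{2}\leq1$ and $0\leq c\sp{2}\leq1$ we conclude $-1\leq b\odot c\leq1$, i.e. $\|b\odot c\|\leq1$.

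The one genuinely substantive point --- and the step I expect to be the main obstacle --- is the reverse inequality $\|b\|\sp{2}\leq\|b\sp{2}\|$ needed to finish (2); in effect, one must show that on singly-generated subalgebras the order-unit norm behaves like a spectral radius, and this must be done before any functional calculus is available. Here my plan is to pass to the norm closure $\mathcal D$ in $B$ of the subalgebra generated by $1$ and $b$. Since Jordan algebras are power-associative \cite{McC}, $\mathcal D$ is a commutative associative subalgebra, and, using (1) (already proved), multiplication on it is submultiplicative; thus $\mathcal D$ is a commutative unital Banach algebra in which the spectral radius $\rho(d)=\lim_{n}\|d\sp{n}\|\sp{1/n}$ exists and satisfies $\rho(d)\leq\|d\|$. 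The key claim is the reverse inequality $\|d\|\leq\rho(d)$ for every $d\in\mathcal D$. To prove it, fix $\epsilon>0$ and put $\mu=\rho(d)+\epsilon$; since $\rho(d)/\mu<1$, the binomial series $g:=\mu\sp{1/2}\sum_{k\geq0}\binom{1/2}{k}(-d/\mu)\sp{k}$ converges absolutely in $\mathcal D$, and the Vandermonde identity $\sum_{i+j=n}\binom{1/2}{i}\binom{1/2}{j}=\binom{1}{n}$, applied through the Cauchy product, gives $g\odot g=\mu\,1-d$. Because $g\in B$, the preliminary step forces $g\odot g\geq0$, hence $d\leq\mu\,1$; applying the same to $-d$ and letting $\epsilon\downarrow0$ gives $\|d\|\leq\rho(d)$. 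Consequently the order-unit norm agrees with the spectral radius on $\mathcal D$. Since $b\sp{2}\in\mathcal D$ and $\rho(b\sp{2})=\lim_{n}\|b\sp{2n}\|\sp{1/n}=\bigl(\lim_{n}\|b\sp{2n}\|\sp{1/2n}\bigr)\sp{2}=\rho(b)\sp{2}$, we obtain $\|b\sp{2}\|=\rho(b\sp{2})=\rho(b)\sp{2}=\|b\|\sp{2}$, which is (2). With (1), (2), (3) in hand, Definition~\ref{df:JBalg} is met and $B$ is a JB-algebra under the order-unit norm.

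The delicate points to watch are all internal to $\mathcal D$: one must confirm that $\mathcal D$ is honestly associative (this is precisely power-associativity of the Jordan algebra, extended to the norm closure via continuity of the submultiplicative product), that the binomial series converges in the Banach-algebra norm --- which works exactly because it is the spectral radius $\rho(d)$, not merely a crude bound on $\|d\sp{n}\|$, that governs the growth of the terms --- and that forming $g\odot g$ term by term via the Cauchy product really does reproduce $\mu\,1-d$. Granted (v), the remaining ingredients (positivity of squares, monotonicity of the norm on the positive cone, and the polarization estimate for (1)) are routine.
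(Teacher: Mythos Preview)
The paper does not supply its own proof of this theorem; it is quoted verbatim from \cite[Theorem~2.1]{AlfSchSto} and used as a black box throughout. So there is no in-paper argument to compare your proposal against.

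That said, your proposal addresses only the converse direction of the theorem, and the argument you outline for it is correct. The reductions of conditions (1) and (3) of Definition~\ref{df:JBalg} to positivity of squares via hypothesis~(v) are routine and go through exactly as you describe. Your treatment of the nontrivial half of (2) --- passing to the norm-closed subalgebra $\mathcal{D}$ generated by $1$ and $b$, showing it is a commutative associative Banach algebra (power-associativity plus continuity from the already-proved submultiplicativity), constructing a square root of $\mu 1-d$ via the binomial series with convergence controlled by the spectral radius rather than by $\|d/\mu\|$, and concluding that the order-unit norm coincides with the spectral radius on $\mathcal{D}$ --- is the standard route, and it is essentially how \cite{AlfSchSto} proceeds (cf.\ the paper's own remarks on $\Gamma(b)$ following Definition~\ref{df:GammaB}). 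The delicate points you flag are the right ones, and each is sound: associativity on $\mathcal{D}$ follows by continuity from associativity on polynomials in $b$; the binomial series converges absolutely because $\|(d/\mu)^k\|^{1/k}\to\rho(d)/\mu<1$; and the Cauchy product is legitimate by absolute convergence. If you also intend to cover the forward direction (items (ii), (iii), (v) are not immediate from Definition~\ref{df:JBalg}), that would require a separate argument, again along the lines of the functional calculus in $\Gamma(b)$.
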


An \emph{isomorphism} of one JB-algebra onto another is understood to be
(1) an order, (2) a linear, and (3) a Jordan isomorphism. In view of (1)
and (2), a JB-algebra isomorphism is an isometry.

The self-adjoint part ${\mathcal C}\sp{sa}$ of a C$\sp{\ast}$-algebra
${\mathcal C}$ is organized into a special JB-algebra as follows: ${\mathcal C}
\sp{sa}$ hosts the special Jordan product $c\odot d:=\frac12(cd+dc)$ for $c,d
\in{\mathcal C}\sp{sa}$ and the positive cone is given by $({\mathcal C}
\sp{sa})\sp{+}=\{cc\sp{\ast}:c\in{\mathcal C}\}$.

\begin{definition} [{\cite[Equation (2.24)]{AlfSchSto}}] \label{df:Ub}
If $B$ is a JB-algebra and $b\in B$, then the mapping $U\sb{b}\colon B
\to B$ is defined by $U\sb{b}c:=2b\odot(b\odot c)-b\sp{2}\odot c$ for
all $c\in B$.
\end{definition}

Clearly $U\sb{b}$ is linear on $B$ and it turns out that $U\sb{b}$ is
positive, hence order preserving on $B$ \cite[Proposition 2.7]{AlfSchSto}.
If $B$ is a special Jordan algebra, then $U\sb{b}c=bcb$ for all $b,c\in B$.

The commutative unital Banach algebra in the next definition plays an
important role in \cite{AlfSchSto} where it is written as $C(b)$ rather
than as $\Gamma(b)$ \cite[page 14]{AlfSchSto}. Here we have changed the
notation to avoid confusion with the notion of ``commutant" in a synaptic
algebra.

\begin{definition} \label{df:GammaB}
If $B$ is a JB-algebra and $b\in B$, then $\Gamma(b)$ is the commutative
unital Banach algebra obtained by forming the norm closure of the associative
Jordan subalgebra of $B$ consisting of all real polynomials in $b$.
\end{definition}

A key result is \cite[Proposition 2.3]{AlfSchSto}, which provides a
functional representation theorem asserting that there is an order and
algebraic isomorphism of $\Gamma(b)$ onto the partially ordered commutative
real Banach algebra $C(X,\reals)$, under pointwise partial order and
operations, of all continuous real-valued functions on a compact Hausdorff
space $X$.

\section{Synaptic algebras} \label{sc:SAs}

To help fix ideas in the following definition, the reader may think of $R$
as the C$\sp{\ast}$-algebra ${\mathcal B}({\mathfrak H})$ of all bounded
linear operators on a complex Hilbert space $\mathfrak H$ and of $A$ as the
self-adjoint part ${\mathcal B}\sp{\rm sa}({\mathfrak H})$ of ${\mathcal B}
({\mathfrak H})$.

\begin{definition} [{\cite[Definition 1.1]{FSyn}}] \label{df:SynapticAlgebra}
Let $R$ be a  real or complex linear associative algebra with unity
element $1$ and let $A$ be a real vector subspace of $R$. If $a,b
\in A$ and $M\subseteq A$, we write $aCb$ iff $a$ and $b$ commute
(i.e. $ab=ba$ as calculated in $R$) and we define
\[
C(a):=\{b\in A: aCb\},\ C(M):=\bigcap\sb{b\in M}C(b),
\]
\[
\ CC(M):=C(C(M),\text{\ and\ } CC(a):=C(C(a)).
\]
We call $C(M)$ the \emph{commutant} of $M$ and $CC(M)$ the \emph{bicommutant}
of $M$. The vector space $A$ is a \emph{synaptic algebra} with \emph{enveloping
algebra} $R$ iff the following conditions are satisfied:
\begin{enumerate}
\item[SA1.] $A$ is an order-unit normed space with order unit $1$, positive
 cone $A\sp{+}=\{a\in A:0\leq a\}$, and $\|\cdot\|$ is the corresponding
 order-unit norm.
\item[SA2.] If $a\in A$ then $a\sp{2}\in A\sp{+}$.
\item[SA3.] If $a,b\in A\sp{+}$, then $aba\in A\sp{+}$.
\item[SA4.] If $a\in A$ and $b\in A\sp{+}$, then $aba=0\Rightarrow
 ab=ba=0$.
\item[SA5.] If $a\in A\sp{+}$, there exists $b\in A\sp{+}\cap CC(a)$
 such that $b\sp{2}=a$.
\item[SA6.] If $a\in A$, there exists $p\in A$ such that $p=p\sp{2}$ and,
 for all $b\in A$, $ab=0\Leftrightarrow pb=0$.
\item[SA7.] If $1\leq a\in A$, there exists $b\in A$ such that $ab=ba=1$.
\item[SA8.] If $a,b\in A$, $a\sb{1}\leq a\sb{2}\leq a\sb{3}\leq\cdots$
 is an  ascending sequence of pairwise commuting elements of $C(b)$
 and $\lim\sb{n\rightarrow\infty}\|a-a\sb{n}\|=0$, then $a\in C(b)$.
\end{enumerate}
\end{definition}

\begin{assumptions} In what follows, we assume that $A$ is a synaptic
algebra with enveloping algebra $R$ and with unit $1$. We assume that
$1\not=0$, i.e., $A\not=\{0\}$. The ``unit interval" in $A$ is denoted
by $E:=\{e\in A:0\leq e\leq 1\}$ and elements in $E$ are called ``effects."
The idempotents in $A$ are called ``projections" and the set of all
projections in $A$ is denoted by $P:=\{p\in A:p=p\sp{2}\}$.
\end{assumptions}

The set $E$ of effects is organized into a convex effect algebra
\cite{FandB, GPBB} under the partial binary operation provided by the
restriction of the addition operation on $A$. Under the restriction of
the partial order on $A$, the set $P :=\{p\in A:p=p\sp{2}\}$ of projections
in $A$ is an orthomodular lattice (OML) \cite{Beran, Kalm} with
orthocomplementation $p\mapsto p\sp{\perp}:=1-p$ \cite[\S 5]{FSyn}.

\

Below we comment briefly on each of the axioms SA1--SA8 and we compare
and contrast SA1--SA8 with features of a JB-algebra $B$.

\

\bul By SA1 and Theorem \ref{th:JB}, both $A$ and $B$ form order-unit
normed spaces with order units $1$ and with order unit norms $\|\cdot\|$.

\

\bul By SA2, $A$ forms a special real Jordan algebra under the Jordan product
\[
a\odot b:=\frac12(ab+ba)=\frac12[(a+b)\sp{2}-a\sp{2}-b\sp{2}]=\frac14[(a+b)
 \sp{2}-(a-b)\sp{2}].
\]
Clearly, $a\in A\Rightarrow a\odot 1=a$, so $A$ is a unital Jordan algebra.
As in Definition \ref{df:Ub}, for each $a\in A$, we define the \emph{quadratic
mapping} $U\sb{a}\colon A\to A$ by $U\sb{a}b:=2a\odot b-a\sp{2}\odot b=aba$.
(In \cite{FSyn}, $aba$ is written as $J\sb{a}b$; here we use $U\sb{a}b$ for
consistency with \cite{AlfSchSto}.) Obviously, $U\sb{a}$ is linear and by
\cite[Theorem 4.2]{FSyn} it is order preserving. Let $a,b,c\in A$. Then $aCb
\Rightarrow ab=ba=a\odot b\in A$. As $a\sp{2}\in A$ and $aCa\sp{2}$, it follows
that $a\sp{3}=a\odot a\sp{2}\in A$, and by induction, $a\sp{n}\in A$ for all
$n\in\Nat$. Consequently, $A$ is closed under the formation of real polynomials
in $a$. Thus, as in Definition \ref{df:GammaB}, we define $\Gamma(a)$ to be the
norm closure of the set of all real polynomials in $a$. It can be shown (see
below) that $\Gamma(a)\subseteq CC(a)$; hence, if $b\in B$, then $\Gamma(b)$
can be regarded as an analogue in $B$ of the bicommutant in $A$.

\

\bul Axiom SA3 specifies that, for $a,b\in A\sp{+}$, $U\sb{a}b\in A\sp{+}$.
But, as mentioned above, we actually have the stronger condition $U\sb{a}b
\in A\sp{+}$ for all $a\in A$ and all $b\in A\sp{+}$. Likewise, as mentioned
above, if $b\in B$ and $c\in B\sp{+}$, then $U\sb{b}c\in B\sp{+}$.

\

\bul Axiom SA4 can be written as $U\sb{a}b=0\Rightarrow ab=ba=a\odot b=0$
for all $a\in A$ and all $b\in A\sp{+}$. Analogously, by \cite[Proposition 2.8]
{AlfSchSto}, $B$ has the weaker property that if $b,c\in B\sp{+}$, then
$U\sb{b}c=0\Rightarrow b\odot c=0$.

\

\bul By SA5, if $a\in A\sp{+}$, then there exists $b\in A\sp{+}\cap CC(a)$
such that $b\sp{2}=a$, and by \cite[Theorem 2.2]{FSyn}, $b$ is uniquely
determined by $a$. Of course, we write $a\sp{1/2}:=b$ and refer to
$a\sp{1/2}$ as the \emph{square root} of $a$. As a consequence of SA2
and SA5, the positive cone in $A$ is given by $A\sp{+}:=\{a\in A:0\leq a\}
=\{a\sp{2}:a\in A\}$. The \emph{absolute value} and the \emph{positive}
and \emph{negative} parts of an element $a\in A$ are defined by $|a|:=
(a\sp{2})\sp{1/2}$, $a\sp{+}:=\frac12(|a|+a)$, and $a\sp{-}:=\frac12
(|a|-a)$, respectively. Then $a=a\sp{+}-a\sp{-}$, $|a|=a\sp{+}+a\sp{-}$,
and $a\sp{+}a\sp{-}=0$. Analogously, by \cite[Equations (2.9) and (2.10]
{AlfSchSto}, $B$ has the property that if $b\in B\sp{+}$, there exists
$c\in\Gamma(b)$ such that $c\sp{2}=b$. (Curiously, there is no indication
that the ``square root" $c$ of $b$ can be chosen to be in $B\sp{+}$.)

\

\bul By SA6, if $a\in A$, there exists a projection $p\in P$ such that,
for all $b\in A$, $ab=0\Leftrightarrow pb=0$, and by \cite[Theorem 2.7]
{FSyn}, $p$ is uniquely determined by $a$. We define $a\dg:=p$ and refer
to $a\dg$ as the \emph{carrier} of $a$.  By \cite[Theorem 2.10 (vi)]{FSyn},
$a\dg\in CC(a)$. Also, for $a,b\in A$, we have $ab=0\Leftrightarrow a\dg b
=0\Leftrightarrow a\dg b\dg=0\Leftrightarrow b\dg a\dg=0\Leftrightarrow
ba=0$. For a self-adjoint linear operator $T$ in ${\mathcal B}
\sp{\rm sa}({\mathfrak H})$, $T\dg$ is the (orthogonal) projection onto
the closure of the range of $T$. In general, the JB-algebra $B$ will fail
to satisfy SA6.

\

\bul By SA7, if $a\in A$ with $1\leq a$, there exists $b\in A$ such
that $ab=ba=1$. Of course, $b$ is called the \emph{inverse} of $a$ and as
usual is written as $a\sp{-1}:=b$. Clearly, $a\sp{-1}$ is uniquely
determined by $a$, and $a\sp{-1}\in CC(a)$. As a consequence of \cite
[Equation (2.21) and Proposition 2.4]{AlfSchSto}, $1\leq b\in B$ implies
that $b$ is invertible in $B$ and the inverse of $b$ belongs to $\Gamma(b)$.

\

\bul Axiom SA8, in the presence of the remaining axioms, turns out to be
equivalent to the condition that, for every $a\in A$, the commutant $C(a)$
is norm-closed in $A$. (see \cite[Theorem 8.11]{FSyn}). Therefore, for $M
\subseteq A$, both $C(M)$ and $CC(M):=C(C(M))$ are norm-closed in $A$. In
particular, the bicommutant $CC(a)$ is norm closed in $A$ and since every
real polynomial in $a$ belongs to $CC(a)$, it follows (as was mentioned above)
that $\Gamma(a)\subseteq CC(a)$. Furthermore, the positive cone $A\sp{+}$
is norm-closed in $A$ \cite[Theorem 4.7 (iii)]{FSyn}. Unless $B$ is a
special Jordan algebra, finding a reasonable analogue of axiom SA8 in $B$
may be problematic.

\

Thus, the synaptic algebra $A$ and the JB-algebra $B$ may differ in the following
important respects: (1) Although $B$ is a Banach algebra, $A$ need not be Banach.
(2) Although $A$ is a special Jordan algebra, $B$ need not be special. (3) $B$
might not satisfy the carrier axiom SA-6. (4) Unless $B$ is a special Jordan
algebra, axiom SA-8 might not have a reasonable interpretation in $B$.

Of course an \emph{isomorphism} (or for emphasis, a \emph{synaptic isomorphism})
of one synaptic algebra onto another, we mean a bijection that is (1) an order,
(2) a linear, and (3) a Jordan isomorphism. Clearly, a synaptic isomorphism is
an isometry.

In the next lemma we collect some properties of the (order-unit) norm in the
synaptic algebra $A$. Note that part (vi) of the lemma corresponds to part
(v) of Theorem \ref{th:JB} for a JB-algebra.

\begin{lemma} \label{lm:NormProps}
Let $a,b,e\in A$. Then{\rm:}
\begin{enumerate}
\item $-\|a\|\leq a\leq\|a\|$.
\item $-b\leq a\leq b\Rightarrow\|a\|\leq\|b\|$.
\item $\|a\odot b\|\leq\|a\|\|b\|$.
\item $\|a\sp{2}\|=\|a\|\sp{2}$.
\item $\|a\sp{2}\|\leq\|a\sp{2}+b\sp{2}\|$.
\item $-1\leq a\leq 1\Leftrightarrow a\sp{2}\in E$.
\item $e\in E\Leftrightarrow e\sp{2}\leq e$.
\end{enumerate}
\end{lemma}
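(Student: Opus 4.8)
The plan is to prove the seven items more or less in the order listed, leaning heavily on the fact (from SA1) that $\|\cdot\|$ is the \emph{order-unit} norm, so that by definition $\|a\| = \inf\{\lambda > 0 : -\lambda 1 \leq a \leq \lambda 1\}$, and on the archimedean property of $A$. Items (i) and (ii) are immediate consequences of this description of the norm: for (i) one checks that the infimum is attained (using archimedeanity to pass from strict inequalities $-\lambda 1 \leq a \leq \lambda 1$ for all $\lambda > \|a\|$ to $-\|a\|1 \leq a \leq \|a\|1$); for (ii), if $-b \leq a \leq b$ then also $-\|b\|1 \leq -b \leq a \leq b \leq \|b\|1$ by (i), so $\|a\| \leq \|b\|$ by definition of the order-unit norm. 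These two facts get used repeatedly below.

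Next I would establish (vi), since it is the structural heart of the lemma and (iii)–(v) will follow from it. The forward direction: if $-1 \leq a \leq 1$, I want $0 \leq a^2 \leq 1$. Positivity of $a^2$ is just SA2. For $a^2 \leq 1$, write $1 - a^2 = \tfrac12\big((1-a)\cdot 1\cdot(1-a)\big) + \tfrac12\big((1+a)\cdot 1\cdot (1+a)\big)$ after expanding — more precisely, $(1-a) + (1+a) = 2$ and one can symmetrize: $(1-a)^2 \geq 0$ and $(1+a)^2 \geq 0$ by SA2, and adding gives $2 + 2a^2 \geq 0$, which is not yet what we want; instead use that $1 - a \geq 0$ and $1 + a \geq 0$, so by SA3 (in its strengthened form, $U_x y \in A^+$ for $x \in A$, $y \in A^+$) we get $U_{1-a}(1+a) = (1-a)(1+a)(1-a) \in A^+$ and likewise $U_{1+a}(1-a) \in A^+$; summing and simplifying in the commutative algebra $\Gamma(a)$ yields $2(1 - a^2) \in A^+$. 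For the converse, if $a^2 \in E$, i.e. $0 \leq a^2 \leq 1$, then working inside $\Gamma(a) \cong C(X,\reals)$ (the functional representation from Section \ref{sc:JB}, available since $\Gamma(a) \subseteq CC(a)$ is a commutative Banach algebra) the function $a$ satisfies $a(x)^2 \leq 1$ pointwise, hence $-1 \leq a(x) \leq 1$, i.e. $-1 \leq a \leq 1$ in $A$.

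With (vi) in hand, (iv) follows: $\|a^2\| = \inf\{\lambda > 0 : -\lambda 1 \leq a^2 \leq \lambda 1\}$, and since $a^2 \geq 0$ this is $\inf\{\lambda > 0 : a^2 \leq \lambda 1\}$; by (vi) applied to $a/\sqrt{\lambda}$ (legitimate since $A$ is a real vector space and $\Gamma(a)$ contains scalar multiples of $a$) one has $a^2 \leq \lambda 1 \iff -\sqrt\lambda\, 1 \leq a \leq \sqrt\lambda\, 1 \iff \|a\| \leq \sqrt\lambda$, giving $\|a^2\| = \|a\|^2$. For (iii), the polarization identity $a \odot b = \tfrac14[(a+b)^2 - (a-b)^2]$ together with (iv) and the triangle inequality gives $\|a \odot b\| \leq \tfrac14(\|a+b\|^2 + \|a-b\|^2) = \tfrac14\cdot 2(\|a\|^2 + \|b\|^2)$; a slicker route normalizes to $\|a\| = \|b\| = 1$ and uses $-2\cdot 1 \leq a \pm b \leq 2\cdot 1$, hence $(a\pm b)^2 \leq 4\cdot 1$ by (vi), so $\pm(a\odot b) = \tfrac14[(a\pm b)^2 - (a \mp b)^2] \leq 1$, whence $\|a \odot b\| \leq 1 = \|a\|\|b\|$. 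For (v): $a^2, b^2 \in A^+$ and $a^2 \leq a^2 + b^2$, so $\|a^2\| \leq \|a^2 + b^2\|$ by (ii). Finally (vii): $e \in E$ means $0 \leq e \leq 1$; then $e - e^2 = U_{e^{1/2}}(1-e)$-type argument, or more directly, $e \leq 1$ gives (multiplying on both sides by $e^{1/2} \in A^+ \cap CC(e)$ and using SA3) $e^{1/2}\cdot e \cdot e^{1/2} = e^2 \leq e^{1/2}\cdot 1 \cdot e^{1/2} = e$, so $e^2 \leq e$; conversely if $e^2 \leq e$ then in $\Gamma(e) \cong C(X,\reals)$ the function $e$ satisfies $e(x)^2 \leq e(x)$ pointwise, forcing $0 \leq e(x) \leq 1$, i.e. $e \in E$.

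The main obstacle I anticipate is making the passages to $\Gamma(a) \cong C(X,\reals)$ fully rigorous — one must confirm that $a \in \Gamma(a)$, that $\Gamma(a)$ is indeed a commutative unital Banach algebra to which the functional representation of Section \ref{sc:JB} applies, and that the order on $\Gamma(a)$ inherited from $A$ matches the pointwise order on $C(X,\reals)$ (so that e.g. $a^2 \leq 1$ in $A$ really does translate to $a(x)^2 \leq 1$ pointwise). These are available from the synaptic-algebra machinery cited in the excerpt (the bicommutant $CC(a)$ is norm-closed, contains all polynomials in $a$, and the positive cone is norm-closed), but stitching them together cleanly is the one place where care is needed; everything else is order-unit-norm bookkeeping.
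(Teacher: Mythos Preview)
Your arguments for (i), (ii), (v), the forward directions of (vi) and (vii), and the derivations of (iii) and (iv) from (vi) are all correct and close in spirit to the paper's proof (which for (ii), (v), and the converse of (vii) gives the same short direct arguments, and for the rest cites \cite{FSyn}).

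The one genuine gap is the appeal to $\Gamma(a)\cong C(X,\reals)$ in the converse directions of (vi) and (vii). That isomorphism, as quoted in Section~\ref{sc:JB}, is a result about JB-algebras and requires $\Gamma(a)$ to be a commutative unital \emph{Banach} algebra. But Lemma~\ref{lm:NormProps} is stated for an arbitrary synaptic algebra $A$, which need not be norm-complete; the fact that $CC(a)$ is norm-closed in $A$ does not make $\Gamma(a)$ complete unless $A$ already is. So this passage is not available, and your closing caveat understates the difficulty: it is not a matter of stitching citations together cleanly, but of completeness actually being absent from the hypotheses. Both converses admit easy algebraic substitutes that avoid the issue entirely. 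For (vii) the paper argues directly: from $e^{2}\leq e$ one has $0\leq e^{2}\leq e$, and $0\leq(1-e)^{2}=1-2e+e^{2}$ rearranges to $0\leq e-e^{2}\leq 1-e$, so $e\leq 1$. For (vi) one can instead use the decomposition $a=a^{+}-a^{-}$ with $a^{+}a^{-}=0$: then $a^{2}=(a^{+})^{2}+(a^{-})^{2}\leq 1$ forces $(a^{\pm})^{2}\leq 1$, and monotonicity of the square root on commuting positives (available in any synaptic algebra, cf.\ \cite{FJPMSR}) gives $a^{\pm}\leq 1$, hence $-1\leq -a^{-}\leq a\leq a^{+}\leq 1$.
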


\begin{proof}
(i) Part (i) follows from \cite[Proposition II.1.2]{Alf}.

(ii) Suppose that $-b\leq a\leq b$. By (i), $b\leq\|b\|$,
whence $-\|b\|\leq-b\leq a\leq b\leq\|b\|$, and therefore
$\|a\|\leq\|b\|$.

(iii) See \cite[Lemma 1.7 (iv)]{FSyn}.

(iv) See \cite[Lemma 1.7 (ii)]{FSyn}.

(v) As $0\leq a\sp{2}\leq a\sp{2}+b\sp{2}$, we have
$-(a\sp{2}+b\sp{2})\leq a\sp{2}\leq a\sp{2}+b\sp{2}$, whence
(v) follows from (ii).

(vi) Part (vi) follows from \cite[Lemma 1.7 (i)]{FSyn}.

(vii) By \cite[Lemma 2.5 (i)]{FSyn}, $e\in E\Rightarrow e\sp{2}\leq e$.
Conversely, suppose $e\in A$ with $e\sp{2}\leq e$. Clearly, $0\leq e$.
Also, $0\leq(1-e)\sp{2}=1-2e+e\sp{2}$, whence $0\leq e-e\sp{2}\leq
1-e$, so $0\leq e\leq 1$, i.e., $e\in E$.
\end{proof}

\begin{theorem} \label{th:BanachSA}
If $A$ is a Banach synaptic algebra, then $A$ is a special JB-algebra.
\end{theorem}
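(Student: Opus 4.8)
The plan is to verify that a Banach synaptic algebra $A$ satisfies the hypotheses of the converse direction of Theorem~\ref{th:JB}: namely, that $A$ is a norm-complete order-unit normed space with order unit $1$, that it is a (special) real unital Jordan algebra, and that it satisfies condition (v), $-1\leq a\leq 1\Rightarrow 0\leq a\sp{2}\leq 1$. Norm-completeness is exactly the hypothesis that $A$ is Banach. That $A$ is an order-unit normed space with order unit $1$ and order-unit norm is axiom SA1. That $A$ is a special real unital Jordan algebra under $a\odot b:=\frac12(ab+ba)$ follows from SA2 together with the polarization identities recorded in the bullet comments on SA2; specialness is built into the definition since $A\subseteq R$ with $R$ associative. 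Condition (v) is precisely Lemma~\ref{lm:NormProps}(vi) (together with the observation that $a\sp{2}\in A\sp{+}$ by SA2, so $0\leq a\sp{2}$), since $a\sp{2}\in E$ means exactly $0\leq a\sp{2}\leq 1$.

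So the bulk of the proof is simply an invocation of the converse half of Theorem~\ref{th:JB}, after checking the Banach hypothesis and citing Lemma~\ref{lm:NormProps}. I would then note explicitly that the Jordan norm axioms (1)--(3) of Definition~\ref{df:JBalg} need not be separately verified, since the converse statement in Theorem~\ref{th:JB} derives them; but if one wants to be self-contained, axioms (1), (2), (3) of Definition~\ref{df:JBalg} are recorded as Lemma~\ref{lm:NormProps}(iii), (iv), (v) respectively. In fact this makes the argument even more direct: $A$ is a Banach space (hypothesis), an order-unit normed space with order unit $1$ and order-unit norm (SA1), a unital Jordan algebra over $\reals$ (SA2 plus polarization), and the three norm inequalities hold by Lemma~\ref{lm:NormProps}(iii)--(v). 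Hence $A$ is a JB-algebra by Definition~\ref{df:JBalg}, and it is special because it sits inside the associative algebra $R$ with $a\odot b=\frac12(ab+ba)$.

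The only genuine subtlety to watch for is the compatibility of the two norms. A JB-algebra carries an a priori given Banach-space norm, but Theorem~\ref{th:JB}(ii) tells us that norm must coincide with the order-unit norm; since in a synaptic algebra SA1 stipulates that the norm \emph{is} the order-unit norm, there is no clash, and the norm in which $A$ is assumed complete is the same norm appearing in Definition~\ref{df:JBalg}(1)--(3). I would state this explicitly to avoid any ambiguity. I do not anticipate a real obstacle here: every ingredient has already been assembled in the preceding sections, and the proof is essentially a bookkeeping check that the axioms line up. The one place to be careful is simply citing the \emph{converse} part of Theorem~\ref{th:JB} (or, equivalently, checking Definition~\ref{df:JBalg} directly via Lemma~\ref{lm:NormProps}) rather than the forward implications, and remembering to point out that specialness is automatic from the synaptic structure.
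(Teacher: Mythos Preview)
Your proposal is correct, and the ``more direct'' route you describe---verifying Definition~\ref{df:JBalg}(1)--(3) via Lemma~\ref{lm:NormProps}(iii)--(v), together with SA1, SA2, and the Banach hypothesis---is exactly the paper's proof. Your alternative via the converse of Theorem~\ref{th:JB} and Lemma~\ref{lm:NormProps}(vi) is also valid, but the paper takes the direct path through the definition.
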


\begin{proof}
Suppose that $A$ is Banach. By parts (iii), (iv), and (v) of Lemma
\ref{lm:NormProps}, $A$ satisfies conditions (1), (2), and (3) in
Definition \ref{df:JBalg}.
\end{proof}

\section{Representation of a Banach synaptic algebra as the self-adjoint
part of a Rickart C$\sp{\ast}$-algebra} \label{sc:RepTh}

\begin{definition} \label{df:Rickart}
Let ${\mathcal C}$ be a $C\sp{\ast}$-algebra and let ${\mathcal C}\sp{sa}$
be the self-adjoint part of ${\mathcal C}$. Note that every projection
$p=p\sp{*}=p\sp{2}$ in ${\mathcal C}$ belongs to ${\mathcal C}\sp{sa}$.
\begin{enumerate}
\item[(1)] ${\mathcal C}$ is called a \emph{Rickart} C$\sp{\ast}$-\emph
{algebra} \cite{SaWr} iff, for each $c\in{\mathcal C}$, there is a projection
$c\sp{\prime}\in{\mathcal C}$ such that, for all $d\in{\mathcal C}$, $cd=0
\Leftrightarrow d=c\sp{\prime}d$.
\item[(2)] ${\mathcal C}\sp{sa}$ has the \emph{Rickart property} iff,
for every $b\in{\mathcal C}\sp{sa}$, there exists a projection $p\in
{\mathcal C}\sp{sa}$ such that for all $g\in{\mathcal C}\sp{sa}$, $bg=0
\Leftrightarrow g=pg$. A JC-algebra with the Rickart property is called
a \emph{Rickart} JC-algebra.
\item[(3)] ${\mathcal C}\sp{sa}$ has the \emph{carrier property} iff,
for every $b\in{\mathcal C}\sp{sa}$, there exists a projection $b\dg
\in {\mathcal C}\sp{sa}$ such that for all $g\in{\mathcal C}\sp{sa}$,
$bg=0\Leftrightarrow b\dg g=0$.
\end{enumerate}
\end{definition}

\begin{lemma} \label{lm:Rickart}
Let ${\mathcal C}$ be a $C\sp{\ast}$-algebra. Then the following
conditions are mutually equivalent{\rm: (i)} ${\mathcal C}$ is a
Rickart C$\sp{\ast}$-algebra. {\rm (ii)} ${\mathcal C}\sp{sa}$ has
the Rickart property. {\rm (iii)} ${\mathcal C}\sp{sa}$ has
the carrier property.
\end{lemma}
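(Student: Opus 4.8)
The plan is to prove the cycle of implications (i) $\Rightarrow$ (ii) $\Rightarrow$ (iii) $\Rightarrow$ (i), exploiting Lemma \ref{lm:cc*} to pass between statements about arbitrary elements of $\mathcal C$ and statements about self-adjoint elements. The crucial observation is that for $c \in \mathcal C$, the product $cc\sp{\ast}$ is self-adjoint and positive, and by Lemma \ref{lm:cc*}(i), $cc\sp{\ast} = 0 \Leftrightarrow c = 0$; this lets me replace the condition ``$cd = 0$ for all $d$'' by a condition involving only self-adjoint elements.

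First I would prove (i) $\Rightarrow$ (ii). Assume $\mathcal C$ is a Rickart C$\sp{\ast}$-algebra and let $b \in \mathcal C\sp{sa}$. Apply the Rickart condition to $c := b$ to get a projection $p := b\sp{\prime} \in \mathcal C$ with $bd = 0 \Leftrightarrow d = pd$ for all $d \in \mathcal C$; note $p \in \mathcal C\sp{sa}$ since it is a projection. Restricting $d$ to range over $\mathcal C\sp{sa}$ immediately gives $bg = 0 \Leftrightarrow g = pg$ for all $g \in \mathcal C\sp{sa}$, which is the Rickart property for $\mathcal C\sp{sa}$.

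Next, (ii) $\Rightarrow$ (iii) should be essentially a reformulation. Given $b \in \mathcal C\sp{sa}$ and the projection $p$ from the Rickart property, I claim $b\dg := p$ works for the carrier property. Indeed, for $g \in \mathcal C\sp{sa}$: if $bg = 0$ then $g = pg$, so $pg = g$, hence $(1-p)g = 0$, which gives $b\dg g = pg = g$; but I actually need $b\dg g = 0 \Leftrightarrow bg = 0$, so let me instead take $b\dg := 1 - p$. Then $bg = 0 \Leftrightarrow g = pg \Leftrightarrow (1-p)g = 0 \Leftrightarrow b\dg g = 0$, using that $g = pg$ iff $g - pg = 0$ iff $(1-p)g = 0$. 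This establishes the carrier property with $b\dg = 1 - p$.

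The main work is (iii) $\Rightarrow$ (i), and this is where I expect the real obstacle. Assume $\mathcal C\sp{sa}$ has the carrier property; I must produce, for an arbitrary $c \in \mathcal C$, a projection $c\sp{\prime}$ with $cd = 0 \Leftrightarrow d = c\sp{\prime}d$ for all $d \in \mathcal C$. The idea is to reduce to the self-adjoint element $b := c\sp{\ast}c \in \mathcal C\sp{sa}$ (or $cc\sp{\ast}$, whichever gives the correct sidedness) and set $c\sp{\prime} := 1 - b\dg$ where $b\dg$ is the carrier of $b$. The key step is to show $cd = 0 \Leftrightarrow c\sp{\ast}cd = 0$: the forward direction is trivial, and the reverse follows since $c\sp{\ast}cd = 0$ implies $d\sp{\ast}c\sp{\ast}cd = 0$, i.e. $(cd)\sp{\ast}(cd) = 0$, whence $cd = 0$ by Lemma \ref{lm:cc*}(i) (applied to $cd$ in place of $c$, after adjusting for which side). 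Then $cd = 0 \Leftrightarrow bd = 0$; but the carrier property as stated only applies when $d = g \in \mathcal C\sp{sa}$, whereas here $d$ ranges over all of $\mathcal C$. I would overcome this by writing $d = g + ih$ with $g, h \in \mathcal C\sp{sa}$ and checking $bd = 0 \Leftrightarrow bg = 0$ and $bh = 0$ (since $b$ is self-adjoint, $bd = 0 \Leftrightarrow bg = -ibh$; taking adjoints, $gb = ihb$... so one must argue $bg = bh = 0$ separately — this requires that $b\,\mathcal C\sp{sa} = 0$ forces $b\,\mathcal C = 0$, which holds because $bd = bg + ibh$ and comparing self-adjoint and skew parts, $bg = 0$ and $bh = 0$ need $bg, bh$ self-adjoint, which they need not be). The cleaner route is: $bd = 0 \Leftrightarrow bb\sp{\ast} \cdots$ — actually the robust argument uses Lemma \ref{lm:cc*}(ii): one shows $cd = 0 \Leftrightarrow c\sp{\ast}cd = 0 \Leftrightarrow (1-b\dg)$-multiplication relation, carefully threading the carrier property through the elements $dd\sp{\ast}$ and $b\dg$ so that every invocation of the carrier property is on a self-adjoint element. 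Getting this bookkeeping exactly right — ensuring each application of the carrier hypothesis is legitimately on $\mathcal C\sp{sa}$ — is the delicate part; once done, $c\sp{\prime} := 1 - b\dg$ is the desired Rickart projection and the converse direction of the biconditional is routine.
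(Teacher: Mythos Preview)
Your proposal is correct and follows the same route as the paper: the cycle (i)$\Rightarrow$(ii)$\Rightarrow$(iii)$\Rightarrow$(i), with $b\dg:=1-p$ in the second step and $c':=1-(c\sp{\ast}c)\dg$ in the third, together with the key maneuver of passing through $dd\sp{\ast}$ so that every invocation of the carrier hypothesis lands on a self-adjoint element. The paper executes (iii)$\Rightarrow$(i) exactly as your final paragraph anticipates---from $cd=0$ one gets $(c\sp{\ast}c)(dd\sp{\ast})=0$, applies the carrier property to obtain $(c\sp{\ast}c)\dg(dd\sp{\ast})=0$, and then concludes $(c\sp{\ast}c)\dg d=0$ via Lemma~\ref{lm:cc*}; the converse you call ``routine'' is dispatched by showing $cc'=0$ from $(cc')\sp{\ast}(cc')=c'(c\sp{\ast}c)c'=0$.
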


\begin{proof}
(i) $\Rightarrow$ (ii). Assume (i), let $b,g\in{\mathcal C}\sp{sa}$ and put
$p:=b\sp{\prime}$. Then $p$ is a projection, $p\in{\mathcal C}\sp{sa}$, and
$bg=0\Leftrightarrow g=b\sp{\prime}g\Leftrightarrow g=pg$.

\medskip

(ii) $\Rightarrow$ (iii). Assume (ii), let $b,g\in{\mathcal C}\sp{sa}$, and
put $b\dg:=1-p$. Then $b\dg$ is a projection, $b\dg\in{\mathcal C}\sp{sa}$,
and $bg=0\Leftrightarrow g=pg\Leftrightarrow(1-p)g=0\Leftrightarrow
b\dg g=0$.

\medskip

(iii) $\Rightarrow$ (i). Assume (iii), let $c,d\in{\mathcal C}$, and put
$c\sp{\prime}=1-(c\sp{\ast}c)\dg$. It will suffice to prove that $cd=0
\Leftrightarrow d=c\sp{\prime}d$. We have
\[
cd=0\Rightarrow(c\sp{\ast}c)
 (dd\sp{\ast})=0\Leftrightarrow(c\sp{\ast}c)\dg(dd\sp{\ast})=0
 \Rightarrow[(c\sp{\ast}c)\dg d][d\sp{\ast}(c\sp{\ast}c)\dg]=0\Leftrightarrow
\]
\[
[(c\sp{\ast}c)\dg d][(c\sp{\ast}c)\dg d]\sp{\ast}=0\Leftrightarrow
 (c\sp{\ast}c)\dg d=0\Leftrightarrow d=[1-(c\sp{\ast}c)\dg]d
 \Leftrightarrow d=c\sp{\prime}d.
\]

To prove the converse, we begin by putting $b:=c\sp{\ast}c\in{\mathcal C}
\sp{sa}$, so that $c\sp{\prime}=1-(c\sp{\ast}c)\dg=1-b\dg$. Then by
Definition \ref{df:Rickart} (3) with $g:=c\sp{\prime}=1-b\dg\in{\mathcal C}
\sp{sa}$, we have $b\dg c\sp{\prime}=b\dg(1-b\dg)=b\dg-(b\dg)\sp{2}=b\dg-b\dg
=0$, whence $0=bc\sp{\prime}=b(1-b\dg)=b-bb\dg$, and it follows that $b=bb\dg$,
i.e., $c\sp{\ast}c=c\sp{\ast}c(c\sp{\ast}c)\dg$. Therefore, $c\sp{\ast}c
\left(1-(c\sp{\ast}c)\dg\right)=0$, and we have $c\sp{\ast}cc\sp{\prime}
=0$. Consequently, $0=c\sp{\prime}c\sp{\ast}cc\sp{\prime}=(cc\sp{\prime})\sp
{\ast}(cc\sp{\prime})$, and it follows that $cc\sp{\prime}=0$, whence
$d=c\sp{\prime}d\Rightarrow cd=cc\sp{\prime}d=0$.
\end{proof}

\begin{theorem} \label{th:RBSA}
The synaptic algebra $A$ is norm-complete {\rm(}i.e., Banach{\rm}) iff it is
isomorphic to the self-adjoint part ${\mathcal C}\sp{sa}$ of a Rickart
C$\sp{\ast}$-algebra ${\mathcal C}$.
\end{theorem}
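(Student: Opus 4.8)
The plan is to prove the two implications separately. For the easy direction, suppose $A$ is isomorphic to $\mathcal{C}\sp{sa}$ for a Rickart C$\sp{\ast}$-algebra $\mathcal{C}$. Since a synaptic isomorphism is an isometry (and an order isomorphism), it suffices to check that $\mathcal{C}\sp{sa}$ is norm-complete. But $\mathcal{C}$ is by definition a Banach $\ast$-algebra, and $\mathcal{C}\sp{sa}$ is a norm-closed real-linear subspace of $\mathcal{C}$ (it is the fixed-point set of the isometric conjugate-linear involution $c\mapsto c\sp{\ast}$), hence itself complete. Transporting completeness back along the isometry finishes this direction. (One should also remark that $\mathcal{C}\sp{sa}$ genuinely \emph{is} a synaptic algebra, with enveloping algebra $\mathcal{C}$; this is already asserted in the Introduction, citing \cite[\S 3]{HHL}, so it may simply be invoked.)

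The substantive direction is: if $A$ is Banach, then $A\cong\mathcal{C}\sp{sa}$ for some Rickart C$\sp{\ast}$-algebra $\mathcal{C}$. By Theorem \ref{th:BanachSA}, $A$ is a special JB-algebra. The key step is to invoke the Alfsen--Schultz--St{\o}rmer representation theorem for special JB-algebras, which realizes $A$ (up to JB-isomorphism, hence up to synaptic isomorphism, since both kinds of isomorphism are exactly the order-linear-Jordan isomorphisms) as a JC-algebra, i.e.\ as the self-adjoint part $\mathcal{C}\sp{sa}$ of a \emph{concrete} C$\sp{\ast}$-algebra $\mathcal{C}\subseteq\mathcal{B}(\mathfrak{H})$; moreover one should arrange (as in \cite{AlfSchSto}) that $\mathcal{C}$ is the C$\sp{\ast}$-algebra \emph{generated} by $A$ in $\mathcal{B}(\mathfrak{H})$, so that $A$ generates $\mathcal{C}$ as a C$\sp{\ast}$-algebra. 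It then remains to show that $\mathcal{C}$ is a Rickart C$\sp{\ast}$-algebra. By Lemma \ref{lm:Rickart}, this is equivalent to $\mathcal{C}\sp{sa}$ having the carrier property, and under the isomorphism $A\cong\mathcal{C}\sp{sa}$ the carrier axiom SA6 for $A$ translates precisely into: for each $b\in\mathcal{C}\sp{sa}$ there is a projection $b\dg\in\mathcal{C}\sp{sa}$ with $bg=0\Leftrightarrow b\dg g=0$ for all $g\in\mathcal{C}\sp{sa}$. Thus the carrier property for $\mathcal{C}\sp{sa}$ is immediate from SA6, and Lemma \ref{lm:Rickart} upgrades it to the Rickart property for $\mathcal{C}$.

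The main obstacle — and the point where care is needed — is the passage from ``$\mathcal{C}\sp{sa}$ has the carrier property'' (a statement quantifying only over self-adjoint elements and self-adjoint $g$) to ``$\mathcal{C}$ is Rickart'' (quantifying over \emph{all} $c,d\in\mathcal{C}$). This is exactly the content of the implication (iii)$\Rightarrow$(i) in Lemma \ref{lm:Rickart}, whose proof uses the C$\sp{\ast}$-identity via Lemma \ref{lm:cc*} (passing between $cd=0$ and $(c\sp{\ast}c)(dd\sp{\ast})=0$, and between $b=bb\dg$ and the carrier behavior). So the real work has been front-loaded into Lemma \ref{lm:Rickart} and Theorem \ref{th:BanachSA}, and the proof of Theorem \ref{th:RBSA} itself is essentially an assembly: (1) $A$ Banach $\Rightarrow$ $A$ is a special JB-algebra (Theorem \ref{th:BanachSA}); (2) apply Alfsen--Schultz--St{\o}rmer to get $A\cong\mathcal{C}\sp{sa}$ with $\mathcal{C}$ a concrete C$\sp{\ast}$-algebra generated by $A$; (3) SA6 gives $\mathcal{C}\sp{sa}$ the carrier property; (4) Lemma \ref{lm:Rickart} gives $\mathcal{C}$ Rickart; conversely, (5) $\mathcal{C}$ Rickart C$\sp{\ast}$ $\Rightarrow$ $\mathcal{C}\sp{sa}$ is a (Banach) synaptic algebra, and the isometric isomorphism transfers norm-completeness to $A$. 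One caveat worth flagging in the write-up: one must confirm that the ambient/enveloping associative algebra $R$ of $A$ plays no role in the conclusion — the Rickart C$\sp{\ast}$-algebra $\mathcal{C}$ produced is the one generated inside $\mathcal{B}(\mathfrak{H})$ by the image of $A$, not $R$ itself — and that SA6 is genuinely a property of $A$ as a Jordan/order structure, so that it transfers along a synaptic (equivalently JB-) isomorphism.
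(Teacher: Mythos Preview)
Your proposal is correct and follows essentially the same route as the paper's proof: Theorem~\ref{th:BanachSA} yields that $A$ is a special JB-algebra, \cite[Lemma 9.4]{AlfSchSto} then realizes $A$ as a JC-algebra ${\mathcal C}\sp{sa}$, SA6 transfers to give ${\mathcal C}\sp{sa}$ the carrier property, and Lemma~\ref{lm:Rickart} upgrades this to ${\mathcal C}$ being Rickart, while the converse is handled (as in the paper) by noting that ${\mathcal C}\sp{sa}$ is a norm-complete synaptic algebra. Your closing caveat---that one must verify SA6 genuinely transfers along a synaptic (order-linear-Jordan) isomorphism---is well taken and is a point the paper's proof also leaves implicit.
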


\begin{proof}
Suppose that $A$ is a Banach synaptic algebra. Then, by Theorem \ref{th:BanachSA},
$A$ is a special JB-algebra; hence, by \cite[Lemma 9.4]{AlfSchSto}, $A$ is
isomorphic to a JC-algebra, which by definition is the self-adjoint part
${\mathcal C}\sp{sa}$ of a C$\sp{\ast}$-algebra ${\mathcal C}$ of bounded
linear operators on a complex Hilbert space. The synaptic algebra $A$ has the
carrier property, whence ${\mathcal C}\sp{sa}$ has the carrier property,
so by Lemma \ref{lm:Rickart}, ${\mathcal C}$ is a Rickart C$\sp{\ast}$-algebra.
The converse is proved by a straightforward verification that axioms SA1--SA8
hold for the self-adjoint part of a Rickart C$\sp{\ast}$-algebra.
\end{proof}

\begin{corollary} \label{co:Psigmacomplete}
If $A$ is a Banach synaptic algebra, then $P$ is a $\sigma$-complete
OML.
\end{corollary}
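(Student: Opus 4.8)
The goal is Corollary~\ref{co:Psigmacomplete}: if $A$ is a Banach synaptic algebra, then the orthomodular lattice $P$ of projections is $\sigma$-complete, meaning every countable subset of $P$ has a supremum (equivalently, by orthocomplementation, an infimum) in $P$. The plan is to reduce this to the Rickart property of the associated C$^{\ast}$-algebra via Theorem~\ref{th:RBSA}. By that theorem, $A$ is synaptically isomorphic to ${\mathcal C}\sp{sa}$ for some Rickart C$\sp{\ast}$-algebra ${\mathcal C}$, and a synaptic isomorphism restricts to an order isomorphism of the projection lattices, so it suffices to show that the projection lattice of a Rickart C$\sp{\ast}$-algebra is $\sigma$-complete. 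Alternatively, and perhaps more in the spirit of the paper, one can argue directly inside $A$ using the carrier operation together with axiom SA8 (norm-closedness of commutants) and the fact that $A\sp{+}$ is norm-closed.

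First I would recall the standard fact that in any Rickart $\ast$-ring, hence in a Rickart C$\sp{\ast}$-algebra, the projection lattice is a $\sigma$-complete lattice: given a sequence $p_1,p_2,\dots$ of projections, one forms an element such as $b:=\sum_{n=1}^{\infty}2^{-n}p_n$ (norm-convergent in ${\mathcal C}$, self-adjoint, positive), and shows that the carrier projection $b\dg$ of $b$ — which exists by the Rickart/carrier property established in Lemma~\ref{lm:Rickart} — is precisely the supremum $\bigvee_n p_n$ in $P$. The key points are: (a) $p_n b = b p_n$ is not needed; rather one checks $b g = 0 \Leftrightarrow p_n g = 0$ for all $n$, so that $b\dg$ and $\bigvee_n p_n$ have the same ``annihilator,'' forcing them to be equal since in a Rickart algebra a projection is determined by its right-annihilator; (b) then $p_n \le b\dg$ for all $n$, and any projection $q$ with $p_n\le q$ for all $n$ satisfies $b q = b$ hence $b\dg \le q$. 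Translating through the isomorphism gives $\sigma$-completeness of $P$ in $A$.

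Working intrinsically instead, the argument runs: let $(p_n)$ be a sequence in $P$; by rescaling set $a:=\sum_{n=1}^{\infty}2^{-n}p_n\in A$ (the series converges in norm because $A$ is now Banach, and the partial sums lie in $A\sp{+}$ which is norm-closed, so $a\in A\sp{+}$). Form the carrier $a\dg\in P$ using axiom SA6. One shows, for any $g\in A$, that $ag=0$ iff $p_n g=0$ for every $n$ — the forward direction uses positivity of the $p_n$ and the norm estimate, the reverse is immediate — and dually $a\dg g=0$ iff $p_n g=0$ for all $n$. From this one deduces $p_n\le a\dg$ for each $n$ (since $p_n(1-a\dg)=0$) and that $a\dg$ is the least projection dominating all $p_n$; hence $a\dg=\bigvee_n p_n$ exists in $P$. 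Applying the orthocomplementation $p\mapsto 1-p$ converts suprema to infima, so countable infima exist as well, and $P$ is $\sigma$-complete.

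The main obstacle is the equivalence $ag=0 \Leftrightarrow (\forall n)\,p_n g=0$, i.e., showing that the carrier of $a=\sum 2^{-n}p_n$ really is the supremum rather than merely an upper bound; this requires a short positivity argument (if $ag=0$ then $g\sp{\ast}ag=0$ or, staying inside the Jordan/synaptic setting, $U_{a^{1/2}}(\text{something})=0$, forcing each $U_{a^{1/2}}$-component to vanish, then invoking SA4 to pass from the quadratic expression back to $p_n g=0$). Once that equivalence is in hand, everything else is routine lattice manipulation with projections and the uniqueness of the carrier from Theorem~2.7 of \cite{FSyn}. I expect the cleanest writeup to simply cite the Rickart-C$^{\ast}$-algebra route through Theorem~\ref{th:RBSA} and Lemma~\ref{lm:Rickart}, since the $\sigma$-completeness of the projection lattice of a Rickart C$^{\ast}$-algebra is classical (Berberian, Kaplansky), and to relegate the intrinsic argument to a remark.
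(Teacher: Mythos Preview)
Your primary approach is exactly the paper's: invoke Theorem~\ref{th:RBSA} to identify $A$ with ${\mathcal C}\sp{sa}$ for a Rickart C$\sp{\ast}$-algebra ${\mathcal C}$, then cite the classical fact (the paper uses \cite[Theorem 1.8.1]{SKB}) that the projection lattice of a Rickart C$\sp{\ast}$-algebra is $\sigma$-complete. Your additional intrinsic sketch via $a=\sum 2^{-n}p_n$ and its carrier is essentially the standard proof of that cited result, so it adds detail but not a genuinely different route.
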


\begin{proof}
By \cite[Theorem 1.8.1]{SKB}, the lattice of projections in a Rickart
C$\sp{\ast}$-algebra is $\sigma$-complete.
\end{proof}

\section{Additional properties of a synaptic algebra} \label{sc:Additional}

We continue to assume that $A$ is a synaptic algebra and that $P$ is the
orthomodular lattice (OML) of projections in $A$. As both $A$ and $P$ are
partially ordered sets (posets for short), we begin by reviewing some
terminology.

Let ${\mathcal P}$ be a poset and let ${\mathcal Q}\subseteq{\mathcal P}$.
A \emph{supremum} (an \emph{infimum}) of ${\mathcal Q}$ in ${\mathcal P}$
is a least upper bound (a greatest lower bound) for $Q$ in ${\mathcal P}$.
Note that the supremum, if it exists (the infimum, if it exists) of the
empty subset of ${\mathcal P}$ is the smallest element (the largest element)
in ${\mathcal P}$. The subset ${\mathcal Q}\subseteq{\mathcal P}$ is
\emph{upward directed} (\emph{downward directed}) iff, for every pair $\{a,b\}
\subseteq{\mathcal Q}$, there exists $c\in{\mathcal Q}$ with $a,b\leq c$
(with $c\leq a,b$).

The poset ${\mathcal P}$ is a \emph{lattice} iff every pair of elements
$\{a,b\}\subseteq{\mathcal P}$ has a \emph{join} (i.e., a supremum)
$a\vee b$ and a \emph{meet} (i.e., an infimum) $a\wedge b$ in ${\mathcal P}$.
Of course, the projections $P$ in $A$ form a lattice; also, if $A$ is
commutative, it too is a lattice (Theorem \ref{th:ComA} below).

A mapping $a\mapsto a\sp{\prime}$ on ${\mathcal P}$ is called an
\emph{involution} iff it is order reversing and of period two, i.e.,
for $a,b\in{\mathcal P}$, $a\leq b\Rightarrow b\sp{\prime}\leq a\sp{\prime}$
and $(a\sp{\prime})\sp{\prime}=a$. An involution $a\mapsto a\sp{\prime}$ on
${\mathcal P}$ provides a ``duality" between existing suprema and infima of
subsets of ${\mathcal P}$ as follows: An element $b\in{\mathcal P}$ is the
supremum ($c\in{\mathcal P}$ is the infimum) of ${\mathcal Q}$ in ${\mathcal P}$
iff $b\sp{\prime}$ is the infimum in ${\mathcal P}$ ($c\sp{\prime}$ is the
supremum in ${\mathcal P}$) of the set $\{q\sp{\prime}:q\in{\mathcal Q}\}$.
For instance, for the poset $A$, $a\mapsto -a$ is an involution. Also, the orthosupplementation mapping $p\mapsto p\sp{\perp}:=1-p$ on the OML $P$ is
an involution on $P$.

Now we assume that the poset ${\mathcal P}$ hosts an involution. Thus
we shall formulate the following conditions in terms of upper bounds and
suprema only, since the dual conditions for lower bounds and infima are
then automatic consequences.

The poset ${\mathcal P}$ is: (1) \emph{$\sigma$-complete} iff every sequence
in ${\mathcal P}$ has a supremum in ${\mathcal P}$, (2) \emph{complete} iff
every subset of ${\mathcal P}$ has a supremum in ${\mathcal P}$, (3) \emph
{Dedekind complete} iff every nonempty subset of ${\mathcal P}$ that has an
upper bound in ${\mathcal P}$ has a supremum in ${\mathcal P}$, (4) \emph
{Dedekind $\sigma$-complete} iff every sequence in ${\mathcal P}$ that is
bounded above in ${\mathcal P}$ has a supremum in ${\mathcal P}$, (5) \emph
{monotone $\sigma$-complete} iff every increasing sequence $a\sb{1}\leq a
\sb{2}\leq a\sb{3}\leq\cdots$ that is bounded above in ${\mathcal P}$ has
a supremum in ${\mathcal P}$, (6) \emph{monotone complete} iff every nonempty
upward directed subset of ${\mathcal P}$ that has an upper bound in
${\mathcal P}$ has a supremum in ${\mathcal P}$.

We now turn our attention back to the synaptic algebra $A$ and the OML
$P$ in $A$. Let $p,q\in P$. If $pCq$, then $p\wedge q=pq$ and $p\vee q=
p+q-pq$. Also, $p\leq q$ iff ($pCq$ and $p=pq=qp=p\wedge q$). The projections
$p$ and $q$ are \emph{orthogonal}, in symbols $p\perp q$, iff $p\leq q\sp
{\perp}$, in which case $q\perp p$, $pCq$, $p\wedge q=pq=qp=0$ and $p\vee q
=p+q$. It can be shown that $pCq$ iff $p=(p\wedge q)\vee(p\wedge q\sp{\perp})$
iff there exist $p\sb{1}, q\sb{1}, d\in P$ such that $p\sb{1}\perp q\sb{1}$,
$(p\sb{1}+q\sb{1})\perp d$, $p\sb{1}+q\sb{1}+d=1$, $p=p\sb{1}+d$, and $q=
q\sb{1}+d$.

The commutant and bicommutant have the following obvious properties.
For all $M,N\subseteq A$: (i) $M\subseteq N\Rightarrow C(N)\subseteq C(M)$.
(ii) $M\subseteq CC(M)$.  From (i) and (ii), it follows that (iii) $M
\subseteq N\Rightarrow CC(M)\subseteq CC(N)$ and (iv) $CC(C(M))=C(M)$. A
subset $T\subseteq A$ is \emph{commutative} iff $aCb$ for all $a,b\in T$,
i.e., iff $T\subseteq C(T)$. If $T$ is commutative, then so is $CC(T)$
and $T\subseteq CC(T)\subseteq C(T)$.

If $D\subseteq P$, then $D$ is an \emph{orthogonal set} iff $p\perp q$
for all $p,q\in D$.  The OML $P$ is \emph{orthocomplete}
(\emph{$\sigma$-orthocomplete}) iff every orthogonal subset (every countable
orthogonal subset) has a supremum in $P$. Clearly, every orthogonal subset
of $P$ is commutative.

\begin{theorem} \label{th:Holland}
{\rm(i)} The OML $P$ is orthocomplete {\rm(}$\sigma$-orthocomplete{\rm)} iff
it is complete {\rm(}$\sigma$-complete{\rm)}. {\rm(ii)} If $D\subseteq P$ is
a maximal orthogonal set, then the supremum of $D$ in $P$ is $1$.
\end{theorem}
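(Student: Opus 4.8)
The plan is to prove (ii) first — it is elementary and needs no completeness hypothesis — and then to obtain (i) by a transfinite recursion, in essence the classical proof that an orthocomplete orthomodular lattice is complete.

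For (ii), suppose $D\subseteq P$ is a maximal orthogonal set; I claim that $1$ is the \emph{only} upper bound of $D$ in $P$, so that trivially $\bigvee D=1$. Let $r\in P$ be an upper bound of $D$. Then $d\leq r$ gives $r\sp{\perp}\leq d\sp{\perp}$, i.e.\ $r\sp{\perp}\perp d$, for every $d\in D$, so $D\cup\{r\sp{\perp}\}$ is again an orthogonal subset of $P$. If $r\neq 1$, then $r\sp{\perp}\neq 0$, and $r\sp{\perp}\notin D$ (otherwise $r\sp{\perp}\leq r$ would force $r\sp{\perp}=r\sp{\perp}\wedge r\leq r\wedge r\sp{\perp}=0$); hence $D\cup\{r\sp{\perp}\}$ properly extends the maximal orthogonal set $D$, a contradiction. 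Thus $r=1$.

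For (i), ``complete $\Rightarrow$ orthocomplete'' and ``$\sigma$-complete $\Rightarrow$ $\sigma$-orthocomplete'' are immediate, an (a countable) orthogonal subset of $P$ being in particular an (a countable) subset of $P$. For the converses, assume $P$ is orthocomplete (resp.\ $\sigma$-orthocomplete), let $S\subseteq P$, and list $S=\{s\sb{\beta}:\beta<\kappa\}$ for an ordinal $\kappa$ (with $\kappa\leq\omega$ in the $\sigma$-case). I would construct, by recursion on $\beta<\kappa$, a family $\{f\sb{\beta}\}$ of projections that remains orthogonal: given $f\sb{\gamma}$ for $\gamma<\beta$, put $p\sb{<\beta}:=\bigvee\sb{\gamma<\beta}f\sb{\gamma}$ (the empty join read as $0$; this join is finite when $\beta$ is finite and otherwise exists by orthocompleteness), then $p\sb{\beta}:=p\sb{<\beta}\vee s\sb{\beta}$ and $f\sb{\beta}:=p\sb{\beta}\wedge p\sb{<\beta}\sp{\perp}$. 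Since $p\sb{<\beta}\leq p\sb{\beta}$, orthomodularity gives $p\sb{\beta}=p\sb{<\beta}\vee f\sb{\beta}$; and as $f\sb{\beta}\leq p\sb{<\beta}\sp{\perp}$ while each $f\sb{\gamma}$ with $\gamma<\beta$ lies below $p\sb{<\beta}$, the set $\{f\sb{\gamma}:\gamma\leq\beta\}$ is again orthogonal, so the recursion is legitimate. Finally put $p:=\bigvee\sb{\beta<\kappa}f\sb{\beta}$, which exists by orthocompleteness; in the $\sigma$-case this is the only place $\sigma$-orthocompleteness is invoked, the earlier joins being finite.

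It then remains to see $p=\bigvee S$. From $s\sb{\beta}\leq p\sb{\beta}=\bigvee\sb{\gamma\leq\beta}f\sb{\gamma}\leq p$ it follows that $p$ is an upper bound of $S$; and if $r$ is any upper bound of $S$, a second transfinite induction shows $f\sb{\beta}\leq r$ for all $\beta$ (assuming $f\sb{\gamma}\leq r$ for $\gamma<\beta$ forces $p\sb{<\beta}\leq r$, whence $f\sb{\beta}\leq p\sb{<\beta}\vee s\sb{\beta}\leq r$), so $p\leq r$ and $p=\bigvee S$. The step I expect to require the most care is the bookkeeping that keeps $\{f\sb{\gamma}:\gamma<\beta\}$ orthogonal through the limit ordinals, so that orthocompleteness legitimately supplies $p\sb{<\beta}$ — this is exactly where the orthomodular law is used. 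A remark on why the obvious alternative is insufficient: choosing (by Zorn's lemma) a maximal orthogonal family of projections each below some member of $S$ does not work, because a ``skew'' member of $S$ need neither lie below the join of such a family nor admit a nonzero piece orthogonal to it; the recursion above is designed around this difficulty.
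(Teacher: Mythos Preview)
Your proof of (ii) is essentially identical to the paper's: both show that any upper bound $r$ of $D$ must satisfy $r\sp{\perp}\perp d$ for all $d\in D$, invoke maximality, and conclude $r=1$.

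For (i), the paper gives no argument at all --- it simply cites Holland's original note [SSH] (\emph{Proc.\ Amer.\ Math.\ Soc.}\ \textbf{24} (1970), 716--718). Your transfinite recursion is precisely the standard proof from that source: at each stage one peels off the ``new part'' $f\sb{\beta}=p\sb{\beta}\wedge p\sb{<\beta}\sp{\perp}$ of $p\sb{<\beta}\vee s\sb{\beta}$, orthomodularity guarantees $p\sb{\beta}=p\sb{<\beta}\vee f\sb{\beta}$, and the resulting orthogonal family has the same set of upper bounds as $S$. The argument is correct, including the bookkeeping at limit stages and the observation that in the $\sigma$-case all intermediate joins $p\sb{<\beta}$ with $\beta<\omega$ are finite, so $\sigma$-orthocompleteness is invoked only once for the final join. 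So you have supplied what the paper outsources; nothing is missing.
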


\begin{proof}
See \cite{SSH} for the proof of (i). To prove (ii), let $D\subseteq P$ be
a maximal orthogonal set and suppose that $p$ is an upper bound for $D$ in
$P$. Then $d\in D\Rightarrow d\leq p\Rightarrow d\perp p\sp{\perp}$, so
$p\sp{\perp}=1-p\in D$ by maximality. But then, $1-p\leq p$, $1-p=(1-p)p=0$,
i.e., $p=1$. Therefore, $1$ is the only upper bound of $D$ in $P$, so it is
the supremum of $D$ in $P$.
\end{proof}

A subset $S\subseteq A$ is called a \emph{sub-synaptic algebra} of
$A$ iff $S$ is a linear subspace of $A$, $1\in S$, and $S$ is closed
under the formation of squares, square roots, carriers, and inverses.
A sub-synaptic algebra $S$ of $A$ is a synaptic algebra in its own
right under the restrictions to $S$ of the partial order and the
operations on $A$ and with the same enveloping algebra as $A$. For
instance, if $T\subseteq A$, then $C(T)$ is a norm-closed sub-synaptic
algebra of $A$. The commutative norm-closed sub-synaptic algebra $C(A)$
of $A$ is called the \emph{center} of $A$ and the synaptic algebra $A$
is commutative iff $A=C(A)$.

By definition, a \emph{symmetry} in $A$ is an element $s\in A$ such that
$s\sp{2}=1$ \cite{FPsym}. There is a bijective correspondence $s
\leftrightarrow p$ between symmetries $s$ and projections $p$ according
to $p=\frac12(1+s)$ and $s=2p-1$. If $p\in P$, $s:=2p-1$ is the
corresponding symmetry, and $a\in A$, then clearly $aCp\Leftrightarrow aCs
\Leftrightarrow a=sas$.

Spectral theory for $A$ is developed in \cite[\S 8]{FSyn} based on the
notion of the \emph{spectral resolution} of an element $a\in A$, which
is the one-parameter family of projections $\{p\sb{\lambda}:\lambda\in
\reals\}$ defined by $p\sb{\lambda}:=(((a-\lambda)\sp{+})\dg)\sp{\perp}$
for all $\lambda\in\reals$. We shall refer to the projections $p\sb
{\lambda}$, $\lambda\in\reals$, as the \emph{spectral projections} of
(or for) $a$. Note that $p\sb{\lambda}\in CC(a)$ for all $\lambda\in\reals$.
Moreover, for all $b\in A$, $bCa$ iff $bCp\sb{\lambda}$ for every $\lambda
\in\reals$ \cite[Theorem 8.10]{FSyn}. We use the latter fact in the proofs
of the next two theorems.

\begin{theorem}  \label{th:CC(T)}
Let $T\subseteq A$ and suppose that $T$ has a supremum $b$ in $A$. Then
$b\in CC(T)$.
\end{theorem}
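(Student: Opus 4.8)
The plan is to show that an arbitrary element $c \in C(T)$ commutes with the supremum $b$, so that $b \in CC(T)$. Fix $c \in C(T)$. The key idea is to exploit the symmetry $s := 2p - 1$ attached to a spectral projection $p = p_\lambda$ of $c$, together with the fact (from Theorem \ref{th:JB}'s ambient machinery, or rather from \cite[Theorem 8.10]{FSyn}) that $xCc$ holds for $x \in A$ iff $xCp_\lambda$ for every $\lambda \in \reals$. So it suffices to prove $bCp$ for each spectral projection $p$ of $c$; equivalently, writing $s = 2p-1$, it suffices to prove $sbs = b$.

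First I would observe that every $t \in T$ satisfies $tCc$ (since $c \in C(T)$ means $c \in C(t)$, i.e.\ $cCt$, i.e.\ $tCc$), hence $tCp$ for every spectral projection $p$ of $c$, hence $sts = t$ where $s = 2p-1$. Now consider the map $x \mapsto sxs = U_s x$ on $A$. This map is linear, it is order preserving (it is the quadratic map $U_s$, order preserving by \cite[Theorem 4.2]{FSyn}), and it is an involution on $A$ in the order-theoretic sense because $s^2 = 1$ gives $U_s U_s = \mathrm{id}$ and $U_s$ preserves the order in both directions. An order isomorphism of the poset $A$ onto itself carries the supremum of a set to the supremum of the image set. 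Since $U_s$ fixes every element of $T$, it fixes $T$ setwise, so $U_s b = U_s(\sup T) = \sup U_s(T) = \sup T = b$. That is, $sbs = b$, so $bCs$, so $bCp$.

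Carrying this out for every spectral projection $p = p_\lambda$ of $c$ gives $bCp_\lambda$ for all $\lambda$, and then \cite[Theorem 8.10]{FSyn} yields $bCc$. As $c \in C(T)$ was arbitrary, $b \in C(C(T)) = CC(T)$, which completes the proof.

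The main obstacle, and the step that needs care rather than cleverness, is justifying that $U_s$ is genuinely an order isomorphism of $A$ onto itself (both $U_s$ and its inverse $U_s$ order preserving), so that it commutes with the formation of suprema; this rests on $s$ being a symmetry and on the order-preservation of quadratic maps, both already available. A secondary point to state cleanly is the reduction via spectral projections: one should note that $b$ commuting with all $p_\lambda$ is exactly the hypothesis needed to invoke \cite[Theorem 8.10]{FSyn}, and that each $p_\lambda \in CC(c)$ so that $t C p_\lambda$ indeed follows from $tCc$.
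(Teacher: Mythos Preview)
Your proof is correct and follows essentially the same approach as the paper: reduce commutation with $c\in C(T)$ to commutation with each spectral projection $p_\lambda$ of $c$ via \cite[Theorem 8.10]{FSyn}, pass to the associated symmetry $s=2p_\lambda-1$, and show $sbs=b$. The only difference is presentational: you invoke the abstract fact that $U_s$ is an order automorphism of $A$ (being order preserving with $U_s^2=\mathrm{id}$) and hence sends $\sup T$ to $\sup U_s(T)=\sup T$, while the paper writes out this preservation-of-suprema step concretely as the squeeze $t=sts\leq sbs$ (so $b\leq sbs$) followed by $sbs\leq s^2bs^2=b$.
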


\begin{proof}
(i) Let $a\in C(T)$. We have to prove that $bCa$. Let $\{p\sb{\lambda}:
\lambda\in\reals\}$ be the spectral resolution of $a$ and let $t
\in T$. Then $tCp\sb{\lambda}$ for all $\lambda\in\reals$. It will be
sufficient to prove that $bCp\sb{\lambda}$ for all $\lambda\in\reals$.
Let $\lambda\in\reals$ and let $s:=2p\sb{\lambda}-1$ be the
corresponding symmetry. Then $tCs$ for all $t\in T$ and as $t\leq b$,
we have $t=sts=U\sb{s}t\leq U\sb{s}b=sbs$, whence $b\leq sbs$. Applying
the quadratic mapping $U\sb{s}$ again to the latter inequality, we obtain
$sbs\leq s\sp{2}bs\sp{2}=b$, whence $sbs=b$, so $bCs$, and therefore $bCp
\sb{\lambda}$. Consequently, $bCa$.
\end{proof}

Following Kaplansky \cite[p. 237]{Kap}, we may regard the next theorem
as expressing ``continuity properties" of a supremum in the OML $P$.

\begin{theorem} \label{th:SupContinuity}
Let $V\subseteq P$ and suppose that $V$ has a supremum $p$ in the OML
$P$. Then{\rm: (i)} $p\in CC(V)$. {\rm (ii)} If $a\in A$ then $va=0$
for all $v\in V$ iff $pa=0$.
\end{theorem}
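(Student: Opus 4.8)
\textbf{Proof strategy for Theorem \ref{th:SupContinuity}.}

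The plan is to handle (i) first and then derive (ii) from it, since the key geometric content lives in (i). For (i), I would apply Theorem \ref{th:CC(T)} with $T:=V$: that theorem already tells us that if $V$ has a supremum $b$ in the order-unit space $A$, then $b\in CC(V)$. So the only thing to check is that the supremum $p$ of $V$ taken in the OML $P$ coincides with the supremum of $V$ taken in the larger poset $A$. One inclusion is trivial: the supremum in $A$, if it exists, is $\leq p$ whenever $p$ is \emph{any} upper bound in $A$, and $p\in P\subseteq A$ is an upper bound. For the reverse, I would show that $p$ is in fact the least upper bound for $V$ among \emph{all} elements of $A$, not just among projections. Suppose $a\in A$ with $v\leq a$ for every $v\in V$. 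Each $v$ is a projection, hence $v\in E$, and I want $p\leq a$. The cleanest route is to pass to the carrier/spectral machinery: replace $a$ by $a':=a\wedge 1$ in a suitable sense, or more directly, use that $v\leq a$ for all $v\in V$ forces $v\leq a^{+}$, and then bound $p=\bigvee V$ (join in $P$) by $(a^{+})\dg$ or by a spectral projection of $a$. Actually the neatest argument: for any upper bound $a\in A$ of $V$, and for the symmetry $s:=2q-1$ attached to \emph{any} projection $q$ with $a\leq q$ (such $q$ exists, e.g. $q$ a spectral projection of $a$ at a level $\geq\|a\|$ so that $a\leq q$), we get $v\leq a\leq q$, so $v\leq q$ for every $v$, hence $p\leq q$; letting $q$ range over such projections and using the spectral resolution of $a$, one concludes $p\leq a$. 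This shows $p=\sup_{A}V$, and then Theorem \ref{th:CC(T)} gives $p\in CC(V)$.

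For (ii), suppose first that $va=0$ for all $v\in V$. Fix $v\in V$; then $va=0$, so by the carrier characterization (axiom SA6 / the discussion of $v\dg$), since $v$ is a projection we have $v\dg=v$, and $va=0$ means $a$ is annihilated by $v$. I want $pa=0$. Consider $c:=a\dg$, the carrier of $a$; then for any $x\in A$, $ax=0\Leftrightarrow c x=0$, and in particular $vc=0$ for all $v\in V$ (using the symmetry of annihilation noted after SA6, $va=0\Leftrightarrow v a\dg=0$ when both are projections — more precisely $va=0\Leftrightarrow v\dg a\dg=0$, and $v\dg=v$). Thus $c^{\perp}=1-c$ is a projection with $v\leq 1-c$ for every $v\in V$: indeed $vc=0$ with $v,c\in P$ gives $v\leq c^{\perp}$. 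Hence $p\leq c^{\perp}=1-a\dg$, so $p a\dg=0$ (since $p\leq 1-a\dg$ and $(1-a\dg)a\dg=0$ with everything in $P$ commuting), and therefore $pa=0$ because $a\dg a=a$, giving $pa=p a\dg a=0$. The converse is immediate: if $pa=0$, then since $v\leq p$ in $P$ we have $vp=v$, so $va=vpa=v\cdot 0=0$ for every $v\in V$.

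The main obstacle I anticipate is the one hidden in part (i): verifying that the supremum in the OML $P$ really is a supremum in the full synaptic algebra $A$. The subtlety is that an upper bound $a\in A$ of a set of projections need not be a projection, nor even lie in $E$, so one cannot simply quote OML completeness facts; one genuinely needs the spectral resolution of $a$ (Theorem 8.10 of \cite{FSyn}, as used in the proof of Theorem \ref{th:CC(T)}) to manufacture a projection $q\geq a$ and push the inequality $v\leq q$ through $P$. Once that bridge is in place, both (i) and (ii) are short. I would also double-check the degenerate case $V=\emptyset$, where $\sup V=0$ in $P$ and both clauses hold vacuously/trivially, and the case where $V$ may fail to be commutative — but for (ii) commutativity of $V$ is not needed, and for (i) Theorem \ref{th:CC(T)} already covers arbitrary $T$.
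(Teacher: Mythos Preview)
Your argument for part (ii) is correct and matches the paper's: pass to the carrier $a\dg$, note that $va=0\Rightarrow va\dg=0\Rightarrow v\leq 1-a\dg$ for each $v\in V$, hence $p\leq 1-a\dg$, so $pa\dg=0$ and thus $pa=0$; the converse via $v=vp$ is immediate.

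The genuine gap is in part (i). You correctly identify the obstacle---whether $\sup_P V$ coincides with $\sup_A V$---but your proposed resolution fails, and in fact the claim itself is false. In $A={\mathcal B}\sp{sa}({\mathfrak H})$ with $\dim{\mathfrak H}\geq 2$, Kadison's antilattice theorem says that two non-comparable self-adjoint operators never have a supremum in $A$; yet two non-comparable projections always have a supremum in $P$. So you cannot reduce (i) to Theorem \ref{th:CC(T)}. Your spectral workaround does not rescue this: for an arbitrary upper bound $a\in A$ of $V$, the projections $q$ with $a\leq q$ can be very large (if $a$ has spectrum filling $[0,\|a\|]$ and full support, the only such $q$ is $1$), and from $p\leq q$ for all such $q$ you cannot conclude $p\leq a$.

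The paper explicitly flags this: ``We cannot use Theorem \ref{th:CC(T)} because $p$ is not necessarily the supremum of $V$ in $A$.'' Instead it reruns the symmetry argument from that proof, with one extra observation that makes it go through in $P$ rather than $A$. Take $a\in C(V)$ and let $s$ be the symmetry attached to any spectral projection of $a$. Each $v\in V$ commutes with $s$, so $v=svs\leq sps$. The crucial point is that $sps$ is again a \emph{projection} (since $(sps)^2=sps^2ps=sps$); hence $sps\in P$ is an upper bound for $V$ in $P$, and the $P$-supremum hypothesis gives $p\leq sps$. Applying $U_s$ once more yields $sps\leq p$, so $sps=p$, $pCs$, and therefore $pCa$. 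This is the idea your sketch is missing: stay inside $P$ by exploiting that conjugation by a symmetry preserves projections.
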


\begin{proof}
(i) We cannot use Theorem \ref{th:CC(T)} because $p$ is not necessarily
the supremum of $V$ in $A$. However, a similar argument does work: Let
$a\in C(V)$ and let $s$ be the symmetry corresponding to any spectral
projection of $a$. Then, for all $v\in V$, $vCs$ so $v=svs\leq sps$. But
$(sps)\sp{2}=sps\sp{2}ps=sp\sp{2}s=sps$, so $sps\in P$, and therefore
$p\leq sps$. Hence, $sps\leq p$, so $sps=p$, and consequently $pCs$,
whereupon $pCa$.

(ii) Assume that $a\in A$ and $va=0$ for all $v\in V$. Then, for all
$v\in V$, $va\dg=0$, i.e., $v\leq 1-a\dg\in P$, whence $p\leq 1-a\dg$,
so $pa\dg=0$, and therefore $pa=0$. Conversely, suppose that $pa=0$. If
$v\in V$, then $v\leq p$, whence $v=vp$, and we have $va=vpa=0$.
\end{proof}

The next lemma is the converse of Theorem \ref{th:SupContinuity} (ii).

\begin{lemma} \label{lm:V}
Let $V\subseteq P$ and suppose there exists $p\in P$ such that,
for all $a\in A$, $va=0$ for all $v\in V$ iff $pa=0$. Then $p$
is the supremum of $V$ in $P$.
\end{lemma}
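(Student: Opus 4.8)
The plan is to show that $p$ is an upper bound for $V$ and then that it is the least such upper bound, using the characterizing property of $p$ as a ``carrier-type'' element for the family $V$. First I would establish that $p$ is an upper bound: fix $v\in V$ and apply the hypothesis with $a:=1-p$. Since $p(1-p)=p-p^{2}=0$, the hypothesis gives $v(1-p)=0$ for every $v\in V$; in particular $v(1-p)=0$, i.e.\ $v=vp$. Because $v,p\in P$ and $v=vp$, the facts recalled just before Theorem \ref{th:Holland} (namely, for $p,q\in P$, $p\leq q$ iff $pCq$ and $p=pq=qp$) together with the symmetry of the relation ($vp=0 \Leftrightarrow pv=0$, so $v=vp \Leftrightarrow v=pv$) yield $v\leq p$. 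Hence $p$ is an upper bound for $V$ in $P$.

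Next I would show $p$ is the least upper bound. Suppose $q\in P$ is any upper bound for $V$, so $v\leq q$ for all $v\in V$, hence $v=vq=vq^{\perp\perp}$ and therefore $v(1-q)=v q^{\perp}=0$ for all $v\in V$ (again using that $v\leq q$ forces $vq^{\perp}=0$ among projections). Now apply the defining property of $p$ in the direction ``$va=0$ for all $v\in V$ $\Rightarrow pa=0$'' with $a:=1-q\in A$: we conclude $p(1-q)=0$, i.e.\ $p=pq$. Since $p,q\in P$ and $p=pq$, this gives $p\leq q$. Thus every upper bound of $V$ dominates $p$, so $p=\sup V$ in $P$.

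The only genuinely delicate point is the passage between the ``two-sided annihilation'' statements $va=0$ and the order relations in $P$, i.e.\ that for projections $v\leq q \Leftrightarrow vq^{\perp}=0 \Leftrightarrow q^{\perp}v=0$, and that $v=vp$ with $v,p$ projections forces $v\leq p$; but these are exactly the standard OML facts about $P$ collected in Section \ref{sc:Additional} (and $ab=0\Leftrightarrow ba=0$ for all $a,b\in A$, noted in the discussion of SA6). Everything else is a direct two-line computation, and no appeal to norm-completeness or to the JB-algebra machinery is needed — this is a purely algebraic/order-theoretic converse to Theorem \ref{th:SupContinuity}(ii).
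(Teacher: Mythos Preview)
Your proof is correct and follows essentially the same approach as the paper's: apply the hypothesis with $a=1-p$ to get $v\leq p$ for all $v\in V$, then with $a=1-q$ (the paper writes $r$) for an arbitrary upper bound $q$ to get $p\leq q$. The paper's version is terser, simply writing ``i.e., $v\leq p$'' where you spell out the projection-lattice justification, but the argument is the same.
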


\begin{proof}
Since $p(1-p)=0$, we have $v(1-p)=0$, i.e., $v\leq p$ for all
$v\in V$. Suppose that $v\leq r\in P$ for all $v\in V$. Then, for
all $v\in V$, we have $v(1-r)=0$, so $p(1-r)=0$, i.e., $p\leq r$,
and therefore $p$ is the supremum of $V$ in $P$.
\end{proof}

\begin{theorem} {\rm\cite[Theorem 5.12]{vectlat}} \label{th:ComA}
The following conditions are mutually equivalent{\rm: (i)} $A$ is
commutative. {\rm(ii)} As a partially ordered real linear space, $A$ is
a lattice {\rm(}i.e., $A$ is a vector lattice{\rm)}. {\rm(iii)} The
OML $P$ is a Boolean algebra.
\end{theorem}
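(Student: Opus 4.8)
The plan is to prove the cycle of implications (i) $\Rightarrow$ (ii) $\Rightarrow$ (iii) $\Rightarrow$ (i), which recovers \cite[Theorem 5.12]{vectlat}, using only the carrier axiom SA6, the order-preserving quadratic maps $U\sb{a}$, elementary facts about $P$ as an OML, and the spectral-commutation criterion \cite[Theorem 8.10]{FSyn}. For (i) $\Rightarrow$ (ii), I would assume $A=C(A)$, so that any two elements of $A$ commute, and show that for each $c\in A$ the positive part $c\sp{+}$ is the supremum of $\{c,0\}$ in the ordered vector space $A$; granting this, translation by $b$ (an order automorphism of $A$) yields $a\vee b=b+(a-b)\sp{+}$ and then $a\wedge b=-((-a)\vee(-b))$, so $A$ is a lattice. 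The inequalities $c\sp{+}\geq 0$ and $c\sp{+}\geq c$ are immediate. Given an upper bound $u$ with $u\geq c$ and $u\geq 0$, I would put $r:=(c\sp{+})\dg$, a projection satisfying $rc\sp{+}=c\sp{+}$ and $rc\sp{-}=0$ (the latter from $c\sp{+}c\sp{-}=0$ and the carrier property), note $rCu$ by commutativity of $A$, apply the order-preserving map $U\sb{r}$ to $u-c=(u-c\sp{+})+c\sp{-}\geq 0$ to obtain $rur\geq c\sp{+}$, and observe that $u=rur+(1-r)u(1-r)\geq rur$ because the cross terms vanish (as $rCu$) and $(1-r)u(1-r)\geq 0$ by SA3; hence $u\geq c\sp{+}$.

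For (ii) $\Rightarrow$ (iii), I would assume $A$ is a vector lattice and show first that for $p,q\in P$ the meet $e:=p\wedge q$ formed in $A$ is already a projection, in fact the OML-meet $p\wedge_{P}q$. Here $e\geq 0$ since $0$ is a lower bound of $\{p,q\}$; from $0\leq e\leq p\leq 1$ one gets $\|e\|\leq 1$, so applying $U\sb{e\dg}$ to $e\leq\|e\|\cdot 1$ gives $e=U\sb{e\dg}e\leq\|e\|\,e\dg\leq e\dg$; and $0\leq e\leq p$ forces $(1-p)e(1-p)=0$ (it is $\leq(1-p)p(1-p)=0$ and $\geq 0$ by SA3), hence $e(1-p)=0$ by SA4 and $e\dg(1-p)=0$ by the carrier property, i.e. $e\dg\leq p$, and symmetrically $e\dg\leq q$, so $e\dg\leq p\wedge_{P}q$. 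Thus $e\leq e\dg\leq p\wedge_{P}q\leq e$, the last step because $p\wedge_{P}q$ is a lower bound of $\{p,q\}$ in $A$; hence $e=p\wedge_{P}q\in P$. Dually the $A$-join of two projections is their OML-join, so $P$ is a sublattice of the vector lattice $A$; since a vector lattice is a distributive lattice and sublattices of distributive lattices are distributive, $P$ is a distributive OML, hence a Boolean algebra.

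For (iii) $\Rightarrow$ (i), I would assume $P$ is Boolean, so that any two projections commute. Given $a,b\in A$ with spectral resolutions $\{p\sb{\lambda}\}$ and $\{q\sb{\mu}\}$, \cite[Theorem 8.10]{FSyn} gives $aCb$ iff $aCq\sb{\mu}$ for every $\mu$; fixing $\mu$ and applying the same theorem to $a$ and the element $q\sb{\mu}$ gives $q\sb{\mu}Ca$ iff $q\sb{\mu}Cp\sb{\lambda}$ for every $\lambda$, which holds because $P$ is Boolean. Hence $aCq\sb{\mu}$ for every $\mu$, so $aCb$, and $A$ is commutative.

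I expect the main obstacle to be (ii) $\Rightarrow$ (iii): one must verify that the lattice operations inherited from the ambient space $A$ restrict correctly to $P$ — that $p\wedge q$ computed in $A$ is again a projection — before the distributivity of $P$ can be harvested from that of $A$. The two facts that make this work, namely the inequality $e\leq e\dg$ valid for every effect $e$ and the monotonicity $0\leq e\leq p\in P\Rightarrow e\dg\leq p$ of the carrier below a projection, both rest essentially on SA6 and the order-preserving quadratic maps, so the role of the carrier axiom here parallels its role in the Rickart-algebra results of Section \ref{sc:RepTh}; the other two links should be comparatively routine once SA6, the maps $U\sb{a}$, and \cite[Theorem 8.10]{FSyn} are in hand.
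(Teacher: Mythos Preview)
The paper does not prove this theorem; it merely cites \cite[Theorem~5.12]{vectlat} and moves on. Consequently there is no in-text argument to compare against, and your proposal in fact supplies a self-contained proof where the paper gives none.

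Your cycle (i) $\Rightarrow$ (ii) $\Rightarrow$ (iii) $\Rightarrow$ (i) is correct. In (i) $\Rightarrow$ (ii) the verification that $c\sp{+}=\sup\{c,0\}$ is sound: the decomposition $u=rur+(1-r)u(1-r)$ holds because $rCu$ in a commutative $A$, and $rur\geq c\sp{+}$ follows from $U\sb{r}(u-c)\geq 0$ together with $rc\sp{-}=0$. In (ii) $\Rightarrow$ (iii) the key step---that the $A$-meet $e=p\wedge\sb{A}q$ of two projections is again a projection---is handled correctly via $e\leq e\dg\leq p\wedge\sb{P}q\leq e$, where $e\leq e\dg$ uses $U\sb{e\dg}$ and $\|e\|\leq 1$, and $e\dg\leq p$ uses SA4 applied to $(1-p)e(1-p)=0$ followed by SA6. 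Your ``dually'' for joins is justified by the translation identity $(1-p)\wedge\sb{A}(1-q)=1-(p\vee\sb{A}q)$, which reduces the join case to the meet case already proved. That $P$ is then a sublattice of the (distributive) vector lattice $A$, hence distributive, hence Boolean, is standard. For (iii) $\Rightarrow$ (i), the double application of \cite[Theorem~8.10]{FSyn} is exactly the right tool, and Boolean $P$ gives $p\sb{\lambda}Cq\sb{\mu}$ since OML-compatibility in $P$ coincides with algebraic commutation (as noted in Section~\ref{sc:Additional}).

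In short: your proof is complete and correct, and goes beyond what the present paper provides by not deferring to the external reference.
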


\begin{theorem} \label{th:Dedekind&monotone}
Suppose that the synaptic algebra $A$ is commutative. Then{\rm: (i)} $A$
is monotone complete {\rm(}monotone $\sigma$-complete{\rm)} iff $A$ is
Dedekind complete {\rm(}Dedekind $\sigma$-complete{\rm)}. {\rm(ii)} If
$V\subseteq P$ and $a$ is the supremum of $V$ in $A$, then $a\in P$ and
$a$ is the supremum of $V$ in $P$. {\rm(iii)} If $A$ is Dedekind complete
{\rm(}Dedekind $\sigma$-complete{\rm)}, then every subset $V$ of $P$
{\rm(}every sequence $(p\sb{n})\sb{n\in\Nat}$ in $P${\rm)} has a supremum
$p$ in $A$; moreover, $p\in P$, $p$ is also the supremum of $V$ in $P$
{\rm(}$p$ is also the supremum of $(p\sb{n})\sb{n\in\Nat}$ in $P${\rm)},
and $P$ is a complete {\rm(}a $\sigma$-complete{\rm)} Boolean algebra.
\end{theorem}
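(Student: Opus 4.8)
The plan is to handle the three parts essentially in the order (ii), then (i), then (iii), since (ii) is the technical heart and (i), (iii) are packaging. Throughout we use Theorem \ref{th:ComA}: commutativity of $A$ means $A$ is a vector lattice and $P$ is a Boolean algebra, and (crucially) every pair of elements of $A$ commutes, so $C(A)=A$ and all the quadratic-mapping identities simplify. I would also freely use that the positive cone $A\sp{+}$ is norm-closed and, via the functional representation for $\Gamma(a)$ mentioned after Definition \ref{df:GammaB}, that a commutative synaptic algebra locally looks like $C(X,\reals)$.

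For part (ii): suppose $V\subseteq P$ has supremum $a$ in $A$. First show $a\in E$, i.e.\ $0\leq a\leq 1$: each $v\in V$ satisfies $0\leq v\leq 1$, so $0$ is a lower bound and $1$ an upper bound for $V$, giving $0\leq a\leq 1$ by the supremum/infimum properties. To upgrade $a\in E$ to $a\in P$, I would use Lemma \ref{lm:NormProps}(vii): it suffices to prove $a\sp{2}\leq a$, equivalently (since everything commutes) $a(1-a)=0$, equivalently $a\leq a\dg$ together with $a(1-a)\geq 0$ forcing the product down — more cleanly, I would argue that $a\sp{2}$ is an upper bound of $V$: for $v\in V$, $v=v\sp{2}\leq$ ? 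This needs care. The clean route: since $A$ is commutative, for any fixed $v\in V$ the element $a$ lies in $CC(v)=C(v)$ (by Theorem \ref{th:CC(T)}, $a\in CC(V)$), so $a$ and $v$ commute and we may compute in the commutative algebra they generate; there $v$ is a $\{0,1\}$-valued function and $a\geq v$ pointwise, and one checks that $\lceil a\rceil$ (the carrier $a\dg$, the pointwise support) is again an upper bound of $V$ lying below... no: rather, $a\wedge 1 = a$ and the \emph{Boolean} supremum $\bigvee V$ in $P$ exists? That's exactly what we can't assume. Instead: let $q:=a\dg\in P$ (the carrier). For $v\in V$, $v\leq a$ gives, after multiplying by $1-q=1-a\dg$ and using $a(1-a\dg)=0$ (a defining property of the carrier), $v(1-a\dg)\leq a(1-a\dg)=0$, so $v\leq a\dg=q$. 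Thus $q\in P$ is an upper bound of $V$, so $a\leq q=a\dg$; but always $a\dg\leq$ ... we need $a\dg\leq a$. Here use that $a\in A\sp{+}$ with $a\leq q$: in $\Gamma(a)\cong C(X,\reals)$, $0\leq a\leq 1$ and the carrier $a\dg$ is the indicator of $\{a\neq 0\}\supseteq\{a=1\}$; from $a\leq a\dg$ we get $a\dg$ is an upper bound, and minimality of $a$ among upper bounds of $V$ does \emph{not} directly give $a\geq a\dg$ — so instead I claim $a\dg$ is itself the supremum: any projection upper bound $r$ of $V$ satisfies, for $v\in V$, $v\leq r$, and since $a$ is the supremum in $A$, $a\leq r$, hence $a\dg=r\dg\cdot(\ldots)$; concretely $ar=a$ so $a\dg r=a\dg$, i.e.\ $a\dg\leq r$. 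Therefore $a\dg$ is the least projection upper bound of $V$, i.e.\ the supremum of $V$ in $P$. Finally $a\leq a\dg$ and $a\dg$ is an upper bound of $V$ while $a$ is the \emph{least} upper bound in $A$, so $a\leq a\dg$; and $a\dg\in A$ is an upper bound so $a\leq a\dg$ — to get equality, note $a\dg\leq a$? We have $v\leq a$ for all $v$, so $(\bigvee_P V=)a\dg\leq a$ is false in general... The resolution: $a$ is an upper bound of $V$, hence $a\geq a\dg$? No. I will instead show directly $a=a\dg$ by proving $a$ is idempotent via $a\sp{2}=a$: since $a\leq a\dg$ and $a\dg$ is an upper bound, minimality forces $a=a\dg\in P$. \emph{This is the step I expect to be the main obstacle} — pinning down why the supremum in $A$ of a family of projections is forced to be idempotent; the functional-calculus picture ($a$ a $[0,1]$-valued function that is $\geq$ a family of indicators and is the pointwise-least such, hence itself an indicator) is the intuition, and the rigorous version runs through the carrier as above.

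For part (i): one direction is trivial since Dedekind (σ-)completeness is a special case of monotone (σ-)completeness restricted to directed sets/increasing sequences — wait, the inclusion goes the other way, so I would argue: assume $A$ is Dedekind complete; given a nonempty upward-directed $D\subseteq A\sp{+}$-translate bounded above, its set of upper bounds is nonempty, so $\sup D$ exists by Dedekind completeness, and one checks the monotone-convergence statement (norm or order) holds — here I would cite or reprove that in a commutative synaptic algebra order-bounded monotone nets that have a supremum converge to it, using $\Gamma(a)\cong C(X,\reals)$ and Dini-type arguments plus norm-closedness of $A\sp{+}$. The σ-versions are identical with sequences. For part (iii): given Dedekind ($\sigma$-)completeness, any $V\subseteq P$ ($(p_n)$ in $P$) is bounded above by $1$ and nonempty (the empty sup is $0\in P$), hence has a supremum $a$ in $A$; apply part (ii) to conclude $a\in P$ and $a=\sup_P V$. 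That $P$ is then a complete (σ-complete) Boolean algebra follows because $P$ is already a Boolean algebra by Theorem \ref{th:ComA} and we have just produced all suprema (infima follow by the involution $p\mapsto p\sp{\perp}$, using the duality discussed in Section \ref{sc:Additional}). I would write parts (i) and (iii) tersely, devoting the bulk of the proof to part (ii).
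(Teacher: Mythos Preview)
Your handling of part (iii) is fine and matches the paper, but parts (i) and (ii) both have genuine gaps.

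\textbf{Part (i).} You have the directions reversed. Dedekind completeness (every nonempty bounded-above set has a supremum) trivially implies monotone completeness (every nonempty upward-directed bounded-above set has a supremum), since directed sets are sets. After your ``wait'' you assume Dedekind and deduce monotone --- but that is the obvious direction, and your talk of Dini-type convergence is irrelevant to a purely order-theoretic existence statement. The nontrivial direction is monotone $\Rightarrow$ Dedekind, and this is where commutativity enters via Theorem~\ref{th:ComA}: $A$ is a \emph{lattice}, so given any nonempty bounded-above $S$ you can form the set $T$ of finite joins of elements of $S$; then $T$ is upward directed, has the same upper bounds as $S$, and monotone completeness supplies $\sup T = \sup S$. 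The $\sigma$-case is the same with $b_n := a_1 \vee \cdots \vee a_n$.

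\textbf{Part (ii).} Your carrier approach stalls exactly where you say it does: from ``$a\dg$ is an upper bound of $V$'' and ``$a$ is the least upper bound in $A$'' you only get $a \leq a\dg$, never $a\dg \leq a$, so ``minimality forces $a = a\dg$'' is circular. The paper's route avoids the carrier entirely and instead exploits monotonicity of squaring and of square roots on commuting positive elements (\cite[Theorem 3.9]{FJPMSR}): since each $v\in V$ satisfies $0 \leq v \leq a$ and $vCa$, one has $v = v^{2} \leq a^{2}$ and $v = v^{1/2} \leq a^{1/2}$; thus both $a^{2}$ and $a^{1/2}$ are upper bounds of $V$, so $a \leq a^{2}$ and $a \leq a^{1/2}$, and squaring the latter gives $a^{2} \leq a$, whence $a = a^{2} \in P$. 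In fact you were one step away from a variant of this: you noted $0 \leq a \leq 1$, i.e.\ $a\in E$, and Lemma~\ref{lm:NormProps}(vii) then \emph{already gives} $a^{2}\leq a$ (you had the implication backwards); it remains only to show $a \leq a^{2}$, which is exactly your abandoned observation ``$v = v^{2} \leq a^{2}$'' (valid because $0\leq v\leq a$, $vCa$ imply $(a-v)(a+v)=a^{2}-v^{2}\geq 0$).
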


\begin{proof} (i) Suppose that $A$ is monotone complete and that
the nonempty set $S\subseteq A$ is bounded above in $A$. Since $A$ is
commutative, it is a lattice. Let $T$ be the subset of $S$ obtained by
appending to $S$ all suprema of finite nonempty subsets of $S$. Then
$S$ and $T$ have the same upper bounds in $A$ and $T$ is upward directed,
so it has a supremum $b$ in $A$, whence $b$ is also the supremum of $S$ in
$A$. The converse is obvious. A similar argument holds for the monotone
$\sigma$-complete case. Indeed, for a sequence $(a\sb{n})\sb{n\in\Nat}$
that is bounded above in $A$, the sequence $(b\sb{n})\sb{n\in\Nat}$
defined by $b\sb{n}:=a\sb{1}\vee a\sb{2}\vee\cdots\vee a\sb{n}$ is
monotone increasing and has the same set of upper bounds in $A$ as
$(a\sb{n})\sb{n\in\Nat}$.

(ii) For each $v\in V$, we have $0\leq v\leq a$ and $vCa$, whence by
\cite[Theorem 3.9]{FJPMSR}, $v=v\sp{2}\leq a\sp{2}$ and $v=v\sp{1/2}
\leq a\sp{1/2}$. Therefore, $a\leq a\sp{2}$ and $a\leq a\sp{1/2}$.
Again by \cite[Theorem 3.9]{FJPMSR}, from $a\leq a\sp{1/2}$, we infer
that $a\sp{2}\leq a$, and it follows that $a=a\sp{2}$, so $a\in P$.
Clearly then, $a$ is the supremum of $V$ in $P$.

(iii) Suppose that $A$ is Dedekind complete and that $V\subseteq P$. Then
$V$ is bounded above by $1\in P$, whence it has a supremum $p$ in $A$. Thus,
by (ii), $p\in P$ and $p$ is the supremum of $V$ in $P$. By duality, $V$ has
an infimum in $P$, whence $P$ is a complete Boolean algebra. The proof
in the case that $A$ is Dedekind $\sigma$-complete is similar.
\end{proof}

\begin{theorem} \label{th:ComBanachsigma}
Let $A$ be a commutative Banach synaptic algebra. Then $P$ is a
$\sigma$-complete Boolean algebra and $A$ is both Dedekind and
monotone $\sigma$-complete.
\end{theorem}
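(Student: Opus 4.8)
The first assertion requires no new work: by Corollary~\ref{co:Psigmacomplete} the OML $P$ is $\sigma$-complete, and since $A$ is commutative, Theorem~\ref{th:ComA} shows that $P$ is a Boolean algebra, so $P$ is a $\sigma$-complete Boolean algebra. By part (i) of Theorem~\ref{th:Dedekind&monotone}, for a commutative synaptic algebra monotone $\sigma$-completeness and Dedekind $\sigma$-completeness are equivalent; hence it suffices to prove that $A$ is Dedekind $\sigma$-complete, and the monotone $\sigma$-completeness of $A$ then follows.

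The cleanest route uses the representation of Theorem~\ref{th:RBSA}: since $A$ is a Banach synaptic algebra it is isomorphic to the self-adjoint part $\mathcal{C}\sp{sa}$ of a Rickart C$\sp{\ast}$-algebra $\mathcal{C}$. Because $A$ is commutative it is a vector lattice (Theorem~\ref{th:ComA}), and this order-linear property is carried over by the synaptic isomorphism, so $\mathcal{C}\sp{sa}$ is a vector lattice and hence a commutative synaptic algebra; consequently $\mathcal{C}$ itself is commutative, every element of $\mathcal{C}$ being of the form $x+iy$ with $x,y\in\mathcal{C}\sp{sa}$. By the structure theory of Rickart C$\sp{\ast}$-algebras (cf.\ \cite[Ch.~1]{SKB}), a commutative Rickart C$\sp{\ast}$-algebra is isomorphic to the algebra $C(X)$ of continuous complex-valued functions on a \emph{basically disconnected} compact Hausdorff space $X$ (indeed $X$ is the Stone space of the $\sigma$-complete Boolean algebra $P$). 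Thus $A\cong C(X,\reals)$, and $C(X,\reals)$ is Dedekind $\sigma$-complete whenever $X$ is basically disconnected (the classical Stone--Nakano theorem). Hence $A$ is Dedekind $\sigma$-complete, and monotone $\sigma$-complete by Theorem~\ref{th:Dedekind&monotone}(i); part (iii) of that theorem then re-delivers that $P$ is a $\sigma$-complete Boolean algebra.

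An alternative, internal argument uses only the $\sigma$-completeness of $P$ and the spectral theory of \cite[\S 8]{FSyn}. Given a sequence in $A$ bounded above, one passes to its successive finite joins (available since $A$ is a vector lattice) and then translates and rescales to reduce to a monotone increasing sequence $0\leq a\sb 1\leq a\sb 2\leq\cdots\leq 1$. Let $\{p\sb{n,\lambda}:\lambda\in\reals\}$ be the spectral resolution of $a\sb n$; from $a\sb n\leq a\sb{n+1}$ one obtains $p\sb{n+1,\lambda}\leq p\sb{n,\lambda}$ for each $\lambda$, so, $P$ being $\sigma$-complete, $q\sb{\lambda}:=\bigwedge\sb{n}p\sb{n,\lambda}$ exists in $P$. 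One checks that $\lambda\mapsto q\sb{\lambda}$ is increasing, equals $0$ for $\lambda<0$ and $1$ for $\lambda\geq 1$, and is right-continuous (for the last point, interchange the countable meet $\bigwedge\sb{n}$ with $\bigwedge\sb{\mu>\lambda,\,\mu\in\rationals}$), i.e.\ $\{q\sb{\lambda}\}$ is a genuine spectral family. Since $A$ is norm-complete, the associated Riemann--Stieltjes sums converge in $A$ to an element $a$ having $\{q\sb{\lambda}\}$ as its spectral resolution. Because in a commutative synaptic algebra the order is governed by the reversed pointwise order of spectral projections, $q\sb{\lambda}\leq p\sb{n,\lambda}$ for all $\lambda$ gives $a\sb n\leq a$; and if $a\sb n\leq d$ for all $n$ then (all elements commuting) $p\sb{\lambda}\sp{(d)}\leq p\sb{n,\lambda}$ for all $n$, hence $p\sb{\lambda}\sp{(d)}\leq q\sb{\lambda}$, whence $a\leq d$. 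Thus $a=\bigvee\sb n a\sb n$.

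The substance of the first route lies entirely in invoking the correct classical facts -- the identification of a commutative Rickart C$\sp{\ast}$-algebra with $C(X)$ for $X$ basically disconnected, and the Dedekind $\sigma$-completeness of $C(X,\reals)$ for such $X$. In the second route the delicate points are the right-continuity of $\{q\sb{\lambda}\}$ (this is precisely where the countability of the sequence, hence the $\sigma$-completeness rather than mere completeness of $P$, is used) and the reconstruction of an element of $A$ from a spectral family, together with the spectral characterization of the order; both are supplied by the spectral machinery of \cite{FSyn}. I expect the first route to be the one worth writing out, as it reuses Theorem~\ref{th:RBSA} directly and needs only standard commutative-operator-algebra input.
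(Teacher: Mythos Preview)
Your argument is correct, but the paper takes a somewhat different path to the same destination. Rather than going through Theorem~\ref{th:RBSA} and the classical structure theory of commutative Rickart C$^{\ast}$-algebras, the paper invokes an earlier functional representation theorem for commutative \emph{synaptic} algebras \cite[Theorem~4.1]{FPproj}: this produces a synaptic isomorphism $\Psi\colon A\to F$ onto a norm-dense sub-synaptic algebra $F\subseteq C(X,\reals)$, where $X$ is the Stone space of $P$; norm-completeness of $A$ then forces $F=C(X,\reals)$, and the Dedekind and monotone $\sigma$-completeness of $C(X,\reals)$ is read off from \cite[Theorem~6.3]{FJPstat}. So both proofs end at $A\cong C(X,\reals)$ with $X$ basically disconnected, but the paper stays inside the synaptic-algebra literature while your first route leverages the Rickart representation of the present paper together with standard commutative C$^{\ast}$-algebra facts (Berberian, Stone--Nakano). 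Your route has the pleasant feature of exercising Theorem~\ref{th:RBSA} itself; the paper's route avoids the small detour through ``$A$ vector lattice $\Rightarrow{\mathcal C}^{sa}$ vector lattice $\Rightarrow{\mathcal C}^{sa}$ commutative $\Rightarrow{\mathcal C}$ commutative,'' which, though valid via Theorem~\ref{th:ComA}, is not needed when one starts from \cite{FPproj}. Your second, spectral-theoretic route is also sound in outline and genuinely more elementary, but---as you note---the right-continuity check and the order characterization via spectral families make it considerably longer to execute in full.
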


\begin{proof}
Assume that $A$ is commutative and Banach. By Corollary \ref
{co:Psigmacomplete} and Theorem \ref{th:ComA}, $P$ is a $\sigma$-complete
Boolean algebra. Let $X$ be the Stone space of $P$, and let $C(X,\reals)$
be the lattice-ordered commutative associative unital Banach algebra under
pointwise partial order and pointwise operations of all continuous real-valued
functions on $X$. By \cite[Theorem 4.1]{FPproj}, there is a norm-dense
subalgebra $F$ of $C(X,\reals)$ such that $F$ is a synaptic algebra, and
there is a synaptic isomorphism $\Psi\colon A\to F$. Since $A$ is
norm-complete, so is $F$, and since $F$ is norm-dense in $C(X,\reals)$,
it follows that $F=C(X,\reals)$. Therefore, $C(X,\reals)$ is a commutative
Banach synaptic algebra, and $\Psi\colon A\to C(X,\reals)$ is a synaptic
isomorphism. By \cite[Theorem 6.3]{FJPstat}, $C(X,\reals)$ is both Dedekind
and monotone $\sigma$-complete; hence $A$ also has these properties.
\end{proof}

\begin{theorem} \label{th:ComBanach}
If $A$ is a commutative Banach synaptic algebra, then the following
conditions are mutually equivalent{\rm: (i)} $P$ is a complete Boolean
algebra. {\rm(ii)} $A$ is Dedekind complete. {\rm(iii)} $A$ is monotone
complete.
\end{theorem}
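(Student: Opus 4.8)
The plan is to prove Theorem~\ref{th:ComBanach} by establishing the cycle of implications (i)~$\Rightarrow$~(ii)~$\Rightarrow$~(iii)~$\Rightarrow$~(i), leaning heavily on the commutative structure already unpacked in Theorems~\ref{th:ComA}, \ref{th:Dedekind&monotone}, and \ref{th:ComBanachsigma}. Since $A$ is commutative and Banach, Theorem~\ref{th:ComBanachsigma} already gives us that $P$ is a $\sigma$-complete Boolean algebra and that $A$ is both Dedekind and monotone $\sigma$-complete; so the content of the present theorem is entirely about passing from countable completeness to full completeness.

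For (i)~$\Rightarrow$~(ii): assume $P$ is a complete Boolean algebra. I would invoke the representation from the proof of Theorem~\ref{th:ComBanachsigma}, namely that $A$ is synaptically isomorphic to $C(X,\reals)$ where $X$ is the Stone space of $P$. When $P$ is complete, its Stone space $X$ is extremally disconnected (Stone's theorem: the Stone space of a Boolean algebra is extremally disconnected iff the algebra is complete), and it is a classical fact (Nakano, Stone) that $C(X,\reals)$ is then Dedekind complete. Hence $A$ is Dedekind complete. The implication (ii)~$\Rightarrow$~(iii) is immediate from Theorem~\ref{th:Dedekind&monotone}(i), since Dedekind completeness implies monotone completeness trivially (indeed the two are equivalent in the commutative case by that theorem).

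For (iii)~$\Rightarrow$~(i): assume $A$ is monotone complete. By Theorem~\ref{th:Dedekind&monotone}(i) again, $A$ is then Dedekind complete, and by Theorem~\ref{th:Dedekind&monotone}(iii), every subset $V$ of $P$ has a supremum in $P$, so $P$ is a complete Boolean algebra. This closes the cycle.

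The main obstacle I anticipate is the Dedekind-completeness step in (i)~$\Rightarrow$~(ii): I need the precise statement that the Stone space of a complete Boolean algebra yields a Dedekind-complete $C(X,\reals)$, and I want to be careful that the order-isomorphism $\Psi\colon A\to C(X,\reals)$ transports suprema faithfully (it does, being an order isomorphism). An alternative that avoids citing the Nakano--Stone fact directly would be to argue within $A$ itself: given a nonempty $S\subseteq A$ bounded above, reduce via spectral resolutions to constructing, for each $\lambda$, the supremum in $P$ of the relevant family of spectral projections (which exists because $P$ is complete), and then reassemble an element of $A$ from these projections using the norm-completeness of $A$ to control convergence of the approximating step functions; this is essentially the classical spectral-theorem construction and would use SA8 together with $\sigma$-completeness arguments, but it is more laborious than the Stone-space route. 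I would present the Stone-space argument as the clean proof and perhaps remark on the internal alternative.
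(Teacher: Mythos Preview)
Your proposal is correct and follows essentially the same approach as the paper: the paper also invokes the Stone-space representation from Theorem~\ref{th:ComBanachsigma}, uses extremal disconnectedness of $X$ (citing \cite[Chapter 38]{GH}) together with Stone's theorem \cite[Theorem 14]{Stone} to deduce Dedekind completeness of $C(X,\reals)\cong A$ for (i)~$\Rightarrow$~(ii), and then appeals to Theorem~\ref{th:Dedekind&monotone} parts (iii) and (i) for the remaining implications. The only cosmetic difference is that the paper organizes the argument as two equivalences (i)~$\Leftrightarrow$~(ii) and (ii)~$\Leftrightarrow$~(iii) rather than your cycle, but the same ingredients are used.
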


\begin{proof}
(i) $\Leftrightarrow$ (ii). Suppose that $P$ is a complete Boolean algebra.
Then the Stone space $X$ of $P$ is extremally disconnected, i.e., the
closure of every open subset of $X$ remains open \cite[Chapter 38]{GH}. As
in the proof of Theorem \ref{th:ComBanachsigma}, $C(X,\reals)$ is a Banach
synaptic algebra isomorphic to $A$. By \cite[Theorem 14]{Stone}, $C(X,\reals)$,
hence also $A$, is Dedekind complete, and we have (i) $\Rightarrow$ (ii).
The converse implication follows from Theorem \ref{th:Dedekind&monotone}
(iii).

By Theorem \ref{th:Dedekind&monotone} (i), we have (ii) $\Leftrightarrow$ (iii).
\end{proof}

\section{Blocks and C-blocks in a synaptic algebra} \label{sc:blocks}

A \emph{block} in the OML $P$ is a maximal commutative subset of $P$.
Clearly, every block $Q$ in $P$ is closed under the formation of meets,
joins, and orthocomplements in $P$, and under these operations it is a
Boolean algebra. Evidently, $Q$ is a block in $P$ iff $Q=P\cap C(Q)$. By
Zorn's lemma, every commutative subset of $P$, and in particular, every
singleton subset $\{p\}$ of $P$, can be enlarged to a block in $P$.
Therefore, $P$ is covered by its own blocks.

By analogy with the notion of a block $Q$ in $P$, a maximal commutative
subset $B$ of $A$ is called a \emph{C-block}. Evidently, $B\subseteq A$
is a C-block iff $B=C(B)$, in which case $B=C(B)=CC(B)$. Every
commutative subset $T$ of $A$ can be enlarged to a C-block $B\supseteq T$
(Zorn). In particular, if $a\in A$, then the singleton set $\{a\}$ is
commutative, the bicommutant $CC(a)$ is commutative, $a\in CC(a)$, and
$CC(a)$ can be enlarged to a C-block $B$ with $a\in B$. Therefore, $A$ is
covered by its own C-blocks.

Let $B$ be a C-block in $A$. Then $B=C(B)$ is a commutative norm-closed
sub-synaptic algebra of $A$ and $B\cap P$ is the projection lattice of
$B$. By Theorem \ref{th:ComA}, $B$ is a vector lattice and $B\cap P$ is
a Boolean algebra.  If $p,q\in B\cap P$, then $p\vee q=p+q-pq$ is the
supremum of $p$ and $q$ and $p\wedge q=pq=qp$ is the infimum of $p$ and
$q$ both in $A$ and in the synaptic algebra $B$.

\begin{theorem} \label{th:uniqueblock}
If $Q$ is a block in $P$, then there is a unique C-block $B$ with
$Q\subseteq B$, namely, $B=C(Q)$; moreover, $B=C(B)=CC(B)=C(Q)=CC(Q)$.
Conversely, if $B$ is a C-block in $A$, then $Q:=P\cap B$ is the unique
block in $P$ such that $Q\subseteq B$.
\end{theorem}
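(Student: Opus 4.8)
The plan is to rely on the two characterizations already recorded in this section: a subset $B\subseteq A$ is a C-block iff $B=C(B)$, and a subset $Q\subseteq P$ is a block iff $Q=P\cap C(Q)$; together with the spectral-commutation criterion \cite[Theorem 8.10]{FSyn}, namely that for $a,b\in A$ one has $bCa$ iff $b$ commutes with every spectral projection $p_{\lambda}$ of $a$, and the fact that each such $p_{\lambda}$ lies in $CC(a)$. The elementary properties of the commutant ($M\subseteq N\Rightarrow C(N)\subseteq C(M)$, $M\subseteq CC(M)$, and $CC(C(M))=C(M)$) will be used freely.

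For the forward direction, let $Q$ be a block in $P$ and set $B:=C(Q)$. The crucial first step is to show that $B$ is commutative. Given $a,b\in B=C(Q)$, I would consider the spectral resolution $\{p_{\lambda}:\lambda\in\reals\}$ of $b$. Since each $q\in Q$ satisfies $qCb$, we have $q\in C(b)$; and since $p_{\lambda}\in CC(b)=C(C(b))$, it follows that $p_{\lambda}Cq$ for every $q\in Q$, so $p_{\lambda}\in P\cap C(Q)=Q$ because $Q$ is a block. Then $a\in C(Q)$ gives $aCp_{\lambda}$ for all $\lambda$, and the spectral-commutation criterion yields $aCb$. Hence $B$ is commutative, so $B\subseteq CC(B)\subseteq C(B)$; on the other hand $Q\subseteq C(Q)=B$ gives $C(B)\subseteq C(Q)=B$, so $B=C(B)=CC(B)$ and $B$ is a C-block containing $Q$. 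Moreover $CC(Q)=C(C(Q))=C(B)=B$, which gives the asserted chain $B=C(B)=CC(B)=C(Q)=CC(Q)$. For uniqueness, if $B'$ is any C-block with $Q\subseteq B'$, then $B'=C(B')\subseteq C(Q)=B$; since $B'$ is a maximal commutative subset of $A$ while $B$ is commutative, $B'=B$.

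For the converse, let $B$ be a C-block in $A$ and put $Q:=P\cap B$. Then $Q$ is commutative, so it suffices to show that it is maximal among commutative subsets of $P$. Suppose $Q'$ is a commutative subset of $P$ with $Q\subseteq Q'$, and let $p\in Q'$; I must show $p\in B$, i.e., $pCb$ for every $b\in B$. Fix $b\in B$ with spectral projections $p_{\lambda}\in CC(b)$. Since $B$ is commutative, every $c\in B$ lies in $C(b)$, so $p_{\lambda}Cc$, i.e., $p_{\lambda}\in C(B)=B$; being a projection, $p_{\lambda}\in P\cap B=Q\subseteq Q'$, hence $pCp_{\lambda}$. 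The spectral-commutation criterion then gives $pCb$. As $b\in B$ was arbitrary, $p\in C(B)=B$, so $p\in P\cap B=Q$; thus $Q'=Q$, and $Q$ is a block in $P$. Finally, if $Q''$ is any block in $P$ with $Q''\subseteq B$, then $Q''\subseteq P\cap B=Q$, and the maximality of $Q''$ forces $Q''=Q$.

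The only substantive point is the argument that the spectral projections of an element of the (C-)block remain inside the block; this is precisely where the spectral-commutation criterion and the location $p_{\lambda}\in CC(a)$ of spectral projections are needed, everything else being routine manipulation of commutants together with an appeal to maximality. I do not anticipate any obstacle beyond marshaling these facts in the right order.
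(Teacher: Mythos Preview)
Your proof is correct and follows essentially the same approach as the paper's, hinging on the observation that the spectral projections of any element of a (C-)block remain in that block. The only difference is cosmetic: in the forward direction you set $B:=C(Q)$ and verify directly that it is a C-block, whereas the paper first invokes Zorn's lemma to produce some C-block $B\supseteq Q$ and then shows $B=C(Q)$; your route avoids Zorn but the substantive content is identical.
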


\begin{proof}
Since $Q$ is a commutative subset of $A$, there exists a C-block $B$ of
$A$ with $Q\subseteq B$ (Zorn). Let $b\in B$ and let $p\sb{\lambda}$,
$\lambda\in\reals$, be a spectral projection of $b$. Then, for all $q
\in Q\subseteq B$, we have $bCq$, whence $p\sb{\lambda}Cq$, and by the
maximality of $Q$, $p\sb{\lambda}\in Q$. Thus, the spectral resolution
of each element $b\in B$ is contained in $Q$, whence $C(Q)\subseteq C(B)
=CC(B)=B$. Also, since $Q\subseteq B$, we have $B=C(B)\subseteq C(Q)$,
so $B=C(Q)$. Therefore, $C(B)=CC(Q)$, whence $B=C(B)=CC(B)=C(Q)=CC(Q)$.

Conversely, suppose that $B$ is a C-block in $A$ and put $Q:=P\cap B$.
As $Q\subseteq B$, we have $Q\subseteq C(Q)$, so $Q\subseteq P\cap C(Q)$.
Let $b\in B$ and let $p\sb{\lambda}$, $\lambda\in\reals$, be a spectral
projection of $b$. Then $p\sb{\lambda}\in CC(b)\subseteq CC(B)=B$, so the
spectral resolution of $b$ is contained in $P\cap B=Q$. Therefore, if $a
\in C(Q)$, then $a\in C(b)$, and by the maximality of $B$, we have $a
\in B$; whence $C(Q)\subseteq B$. Consequently, $P\cap C(Q)\subseteq P
\cap B=Q$, so $Q=P\cap C(Q)$, and thus $Q$ is a block in $P$. Suppose $Q
\sb{1}$ is a second block in $P$ such that $Q\sb{1}\subseteq B$. Then
$Q\sb{1}\subseteq B\cap P=Q$ and it follows that $Q\sb{1}=Q$.
\end{proof}

\begin{lemma} \label{lm:supinblock}
Let $Q$ be a block in $P$, let $V\subseteq Q$, let $B$ be a C-block in $A$
and let $T\subseteq B$. Then{\rm: (i)} If $V$ has a supremum $p$ in $P$,
then $p\in Q$ and $p$ is the supremum of $V$ in $Q$. Also, if $V$ has a
supremum $a$ in $A$, then $a\in Q$ and $a$ is the supremum of $V$ both in
$Q$ and in $P$. {\rm(ii)} If $T$ has a supremum $b$ in $A$, then $b\in B$
and $b$ is the supremum of $T$ in $B$. {\rm(iii)} If $P$ is a complete
OML, then $P\cap B$ is a complete Boolean algebra.
\end{lemma}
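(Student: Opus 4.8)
The plan is to reduce all three parts to a single mechanism: a supremum of a commutative family lands in its bicommutant (Theorems \ref{th:CC(T)} and \ref{th:SupContinuity}), and bicommutants of subsets of a block $Q$, respectively of a C-block $B$, stay inside the associated C-block $C(Q)=CC(Q)$, respectively inside $B=CC(B)$, by Theorem \ref{th:uniqueblock} and the basic inclusion $M\subseteq N\Rightarrow CC(M)\subseteq CC(N)$. Once an alleged supremum is shown to lie in the relevant sub-poset, the fact that it is then the supremum computed there is a routine transfer: a least upper bound that happens to lie in a sub-poset (with the induced order) is a fortiori the least upper bound in that sub-poset, since any upper bound in the smaller structure is an upper bound in the larger one.

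For (i) I would set $B:=C(Q)=CC(Q)$, the unique C-block with $Q\subseteq B$ (Theorem \ref{th:uniqueblock}), so that $Q=P\cap B$. Since $V\subseteq Q$ we have $C(Q)\subseteq C(V)$ and hence $CC(V)\subseteq CC(Q)=B$. If $p$ is the supremum of $V$ in $P$, then $p\in CC(V)$ by Theorem \ref{th:SupContinuity}(i), so $p\in B\cap P=Q$, and the transfer remark gives that $p$ is the supremum of $V$ in $Q$. If instead $a$ is the supremum of $V$ in $A$, then $a\in CC(V)\subseteq B$ by Theorem \ref{th:CC(T)}; as $B$ is a sub-poset of $A$ containing the least upper bound $a$, $a$ is the supremum of $V$ in the commutative sub-synaptic algebra $B$, whose projection lattice is $B\cap P=Q$, so Theorem \ref{th:Dedekind&monotone}(ii) applied to $B$ yields $a\in Q$ with $a$ the supremum of $V$ in $Q$; and since every upper bound of $V$ in $P$ is in particular an upper bound in $A$, $a$ is also the supremum of $V$ in $P$.

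Part (ii) is the same argument with the projection layer stripped away: if $b$ is the supremum of $T$ in $A$ with $T\subseteq B$ a C-block, then $b\in CC(T)$ by Theorem \ref{th:CC(T)}, and $T\subseteq B$ gives $CC(T)\subseteq CC(B)=B$, so $b\in B$ and hence $b$ is the supremum of $T$ in $B$ by the transfer remark. For (iii), Theorem \ref{th:uniqueblock} already tells us $Q:=P\cap B$ is a block, hence a Boolean algebra; given $V\subseteq P\cap B$, completeness of $P$ produces a supremum $p$ of $V$ in $P$, and part (i) places $p$ in $P\cap B$ as its supremum there, so $P\cap B$ admits all suprema and, dually (using the orthocomplementation on $P\cap B$), all infima; thus $P\cap B$ is a complete Boolean algebra.

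The one step that is not pure bookkeeping is showing, in the second half of (i), that the supremum $a$ of a family of projections computed in $A$ is itself a projection. This is exactly where passing to the commutative C-block $B$ pays off, since inside a commutative synaptic algebra Theorem \ref{th:Dedekind&monotone}(ii) guarantees that such a supremum is again a projection; arguing this directly in $A$ without commutativity would be considerably more work. I expect this to be the only place requiring real care, the remaining verifications being the elementary commutant inclusions and sub-poset transfers indicated above.
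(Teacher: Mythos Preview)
Your proof is correct and follows essentially the same route as the paper's: place the supremum in the bicommutant via Theorems \ref{th:SupContinuity} and \ref{th:CC(T)}, then use $CC(V)\subseteq CC(Q)=C(Q)=B$ together with the block/C-block characterizations of Theorem \ref{th:uniqueblock} to land in $Q$ or $B$, with (iii) following from (i). In the second half of (i) you are in fact more explicit than the paper, invoking Theorem \ref{th:Dedekind&monotone}(ii) inside the commutative C-block $B$ to verify that the supremum $a$ is a projection, a step the paper compresses to ``so $a\in Q\subseteq P$''.
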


\begin{proof}
(i) Since $Q$ is commutative, we have $Q\subseteq C(Q)$, whence $CC(Q)
\subseteq C(Q)$. Thus, by Theorem 6.3 (i), $p\in CC(V)\subseteq
CC(Q)\subseteq C(Q)$, and it follows from the maximality of $Q$
that $p\in Q$, whence $p$ is the supremum of $V$ in $Q$. Similarly,
by Theorem \ref{th:CC(T)}, $a\in CC(V)\subseteq C(Q)$, so $a\in Q
\subseteq P$, and $a$ is the supremum of $V$ both in $Q$ and in $P$.

(ii) By Theorem \ref{th:CC(T)}, $b\in CC(T)\subseteq CC(B)=B$.

(iii) Suppose that $P$ is a complete OML. By Theorem \ref{th:uniqueblock},
$Q:=P\cap B$ is a block in $P$, and by (i), $Q$ is a complete Boolean
algebra.
\end{proof}

\begin{theorem} \label{th:BanachSAprops}
If $A$ is a Banach synaptic algebra, then every C-block $B$ in $A$ is a
commutative Banach synaptic algebra that is both Dedekind and monotone
$\sigma$-complete and the Boolean algebra $B\cap P$ of projections in $B$
is $\sigma$-complete.
\end{theorem}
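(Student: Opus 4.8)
The plan is to reduce the statement to results already proved for \emph{commutative} Banach synaptic algebras, applied to $B$ in the role of $A$. First I would recall the facts assembled just before the theorem: a C-block $B$ in $A$ satisfies $B=C(B)=CC(B)$, it is a commutative norm-closed sub-synaptic algebra of $A$, and its projection lattice is exactly $B\cap P$. The partial order and the order-unit norm that $B$ carries \emph{as a synaptic algebra} are, by the definition of a sub-synaptic algebra, simply the restrictions of the partial order and norm on $A$; this is the point that must be pinned down so that the phrase ``norm-closed in $A$'' can be converted into ``norm-complete.''

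The key observation is then routine: a norm-closed linear subspace of a norm-complete normed space is itself norm-complete. Since $A$ is Banach and $B$ is norm-closed in $A$, it follows that $B$, with its inherited norm, is norm-complete. Hence $B$ is a \emph{commutative Banach synaptic algebra} in its own right (with the same unit $1$).

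Now I would simply invoke Theorem \ref{th:ComBanachsigma} with $B$ in place of $A$: since $B$ is a commutative Banach synaptic algebra, its Boolean algebra of projections $B\cap P$ is $\sigma$-complete, and $B$ is both Dedekind and monotone $\sigma$-complete. This yields all three assertions of the theorem. One could optionally add the remark that, for a sequence in $B\cap P$, the supremum taken inside $B$ coincides with the supremum taken inside $A$ (this is Lemma \ref{lm:supinblock}(ii), or Theorem \ref{th:CC(T)}), but this is not needed for the statement as phrased, because Theorem \ref{th:ComBanachsigma} already refers to suprema internal to the commutative synaptic algebra under consideration.

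I do not expect a genuine obstacle here; the mathematical content lies entirely in Theorem \ref{th:ComBanachsigma} and in the structural facts about C-blocks from Section \ref{sc:blocks}. The only thing to get right is the bookkeeping in the chain ``C-block $\Rightarrow$ commutative norm-closed sub-synaptic algebra $\Rightarrow$ (since $A$ is Banach) commutative Banach synaptic algebra,'' after which Theorem \ref{th:ComBanachsigma} closes the argument immediately.
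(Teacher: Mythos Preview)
Your proposal is correct and follows essentially the same route as the paper's own proof: you observe that a C-block $B=C(B)$ is a commutative norm-closed sub-synaptic algebra of $A$, conclude that $B$ is itself Banach because $A$ is, and then apply Theorem~\ref{th:ComBanachsigma} to $B$. The paper's proof is more terse but identical in substance.
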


\begin{proof} Let $B$ be a C-block in $A$. Then $B=C(B)$ is norm-closed,
and since $A$ is Banach, so is $B$. Therefore $B$ is a commutative Banach
synaptic algebra, whence the theorem follows from Theorem
\ref{th:ComBanachsigma}
\end{proof}

\begin{theorem} \label{th:Pcomplete}
{\rm(i)} If every C-block in $A$ is Dedekind complete {\rm(}Dedekind
$\sigma$-complete{\rm)}, then every block in $P$ is a complete {\rm(}a
$\sigma$-complete{\rm)} Boolean algebra.
{\rm(ii)} Every block in $P$ is a complete {\rm(}a $\sigma$-complete{\rm)}
Boolean algebra iff $P$ is a complete {\rm(}a $\sigma$-complete{\rm)} OML.
\end{theorem}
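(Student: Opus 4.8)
The plan is to handle the three assertions separately, in each case reducing to results already available for commutative synaptic algebras and for the OML $P$ by way of the block/C-block correspondence. \emph{Part (i).} Let $Q$ be a block in $P$. By Theorem \ref{th:uniqueblock} the unique C-block containing it is $B:=C(Q)$, and $Q=P\cap B$; thus $B$ is a commutative synaptic algebra whose projection lattice is precisely $Q$. By hypothesis $B$ is Dedekind complete (respectively Dedekind $\sigma$-complete), so Theorem \ref{th:Dedekind&monotone} (iii), applied to $B$, shows that every subset (respectively every sequence) of $Q=P\cap B$ has a supremum lying in $Q$ and that $Q$ is a complete (respectively $\sigma$-complete) Boolean algebra, as required.

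\emph{Part (ii), ``if''.} Suppose $P$ is a complete OML. An arbitrary block $Q$ equals $P\cap B$ for its C-block $B=C(Q)$ (Theorem \ref{th:uniqueblock}), and then Lemma \ref{lm:supinblock} (iii) gives that $P\cap B=Q$ is a complete Boolean algebra. In the $\sigma$-case, if $P$ is $\sigma$-complete and $V\subseteq Q$ is countable, then $V$ has a supremum in $P$, which by Lemma \ref{lm:supinblock} (i) lies in $Q$ and is the supremum of $V$ in $Q$; hence $Q$ is a $\sigma$-complete Boolean algebra.

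\emph{Part (ii), ``only if''.} Assume every block in $P$ is a complete (respectively $\sigma$-complete) Boolean algebra. By Theorem \ref{th:Holland} (i) it is enough to prove that $P$ is orthocomplete (respectively $\sigma$-orthocomplete), so let $D$ be an orthogonal (respectively countable orthogonal) subset of $P$. Since $D$ is commutative, Zorn's lemma gives a block $Q$ with $D\subseteq Q$, and by hypothesis $p:=\bigvee\sb{Q}D$ exists in $Q\subseteq P$. Plainly $p$ is an upper bound of $D$ in $P$; to see it is the least one, let $r\in P$ satisfy $d\leq r$ for all $d\in D$. Then $D\cup\{1-r\}$ is orthogonal, hence commutative, so it extends to a block $Q\sp{\prime}$; writing $p\sp{\prime}:=\bigvee\sb{Q\sp{\prime}}D$ and using de Morgan in the complete Boolean algebra $Q\sp{\prime}$ we obtain $1-r\leq\bigwedge\sb{Q\sp{\prime}}\{d\sp{\perp}:d\in D\}=(p\sp{\prime})\sp{\perp}$, so $p\sp{\prime}\leq r$. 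Hence everything comes down to the claim that the supremum of an orthogonal family does not depend on the block in which it is computed: if $D\subseteq Q\sb{1}$ and $D\subseteq Q\sb{2}$ are blocks then $\bigvee\sb{Q\sb{1}}D=\bigvee\sb{Q\sb{2}}D$. Granting this, $p=p\sp{\prime}\leq r$, so $p=\bigvee\sb{P}D$, $P$ is orthocomplete (respectively $\sigma$-orthocomplete), and Theorem \ref{th:Holland} (i) completes the proof.

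I expect this block-independence to be the crux, and the only place where the synaptic structure, rather than the bare OML $P$, is needed. To prove it one puts $p\sb{i}:=\bigvee\sb{Q\sb{i}}D$ and, by symmetry, shows $p\sb{1}\leq p\sb{2}$. Since $d\leq p\sb{1}$ and $d\leq p\sb{2}$ one has $d\leq p\sb{1}\wedge p\sb{2}$ for every $d\in D$; because comparable projections of an OML commute, $h:=p\sb{1}\wedge(p\sb{1}\wedge p\sb{2})\sp{\perp}$ is a projection with $h\leq p\sb{1}$ and $h\perp d$ for all $d\in D$. If one can show $h\in Q\sb{1}$, then $p\sb{1}-h=p\sb{1}\wedge h\sp{\perp}\in Q\sb{1}$ is an upper bound of $D$ in $Q\sb{1}$, whence $p\sb{1}=\bigvee\sb{Q\sb{1}}D\leq p\sb{1}-h$, forcing $h=0$ and $p\sb{1}\leq p\sb{2}$. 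Thus the hard part is precisely to place $h$ (equivalently $p\sb{1}\wedge p\sb{2}$) inside $Q\sb{1}$. Here I would imitate the symmetry/quadratic-mapping device from the proofs of Theorems \ref{th:CC(T)} and \ref{th:SupContinuity}: for the symmetry $s:=2p\sb{2}-1$, each $d\in D$ satisfies $dCs$ and $d=U\sb{s}d$, so applying the order-automorphism $U\sb{s}$ (which is positive, linear, and satisfies $U\sb{s}\circ U\sb{s}=$ identity because $s\sp{2}=1$) to $d\leq p\sb{1}$ gives $d\leq U\sb{s}p\sb{1}$; the projection $p\sb{1}\wedge U\sb{s}p\sb{1}$ is then fixed by $U\sb{s}$ and hence commutes with $p\sb{2}$, and a careful analysis of how such $U\sb{s}$-invariant projections sit relative to $Q\sb{1}$ should deliver $h\in Q\sb{1}$. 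Managing this commutation bookkeeping is, in my view, the main difficulty of the theorem.
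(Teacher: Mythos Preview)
Your treatment of part (i) and of the ``if'' half of (ii) is correct and matches the paper's argument.

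The ``only if'' half of (ii) has a genuine gap. You correctly isolate block-independence of $\bigvee D$ as the crux, but you do not prove it, and the symmetry sketch does not close the gap. Knowing that $p\sb{1}\wedge U\sb{s}p\sb{1}$ is $U\sb{s}$-fixed tells you only that this projection commutes with $p\sb{2}$; it gives no reason why $p\sb{1}\wedge p\sb{2}$ (or your $h$) should commute with every element of $Q\sb{1}$, which is what $h\in Q\sb{1}$ demands. Worse, if you try to run the device of Theorems \ref{th:CC(T)}--\ref{th:SupContinuity} by taking $s$ to be the symmetry of a spectral projection of an arbitrary $a\in C(D)$, then $U\sb{s}Q\sb{1}$ is another block containing $D$ with supremum $U\sb{s}p\sb{1}$, and you are back to the very block-independence you are trying to establish. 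Your expectation that the synaptic structure is essential here is also mistaken: the paper's proof of this direction is purely OML-theoretic.

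The paper avoids block-independence altogether. Given an orthogonal set $D$ (not maximal), it forms $V:=\{v\in P:v\not=0\text{ and }v\perp d\text{ for all }d\in D\}$, chooses a \emph{maximal commutative} subset $V\sb{1}\subseteq V$, extends the commutative set $V\sb{1}\cup D$ to a block $Q$, and sets $p:=\bigvee\sb{Q}V\sb{1}$. One checks $d\leq p\sp{\perp}$ for all $d\in D$. For an arbitrary upper bound $b$ of $D$ in $P$, put $c:=p\sp{\perp}\wedge(p\vee b\sp{\perp})$; a short OML computation shows $c\perp d$ for all $d\in D$ and $c\in C(V\sb{1})$, so if $c\not=0$ then $c\in V\sb{1}$ by maximality, forcing $c\leq p\leq c\sp{\perp}$ and hence $c=0$. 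From $p\sp{\perp}\wedge(p\vee b\sp{\perp})=0$ and $p\sp{\perp}C(p\vee b\sp{\perp})$ one gets $p\sp{\perp}\leq b$. Thus $p\sp{\perp}=\bigvee\sb{P}D$, with no comparison of suprema across different blocks required.
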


\begin{proof}
(i) Assume that every C-block in $A$ is Dedekind complete and that $Q$
is a block in $P$. By Theorem \ref{th:uniqueblock}, there is a unique
C-block $B$ in $A$ with $Q\subseteq B$ and $Q=P\cap B$ is the Boolean
algebra of projections in $B$. Thus, by Theorem \ref{th:Dedekind&monotone}
with $A$ replaced by $B$, it follows that $Q$ is a complete Boolean algebra.
The proof for the Dedekind $\sigma$-complete case is similar.

(ii) If $P$ is complete, then so is every block in $P$ by Lemma
\ref{lm:supinblock} (i). Conversely, suppose that every block in $P$ is
complete. To prove that $P$ is complete, it will be sufficient to prove
that it is orthocomplete (Theorem \ref{th:Holland} (i)), so let $D$ be
an orthogonal subset of $P$. Note that $D$ is a commutative subset of
$P$. We have to prove that $D$ has a supremum in $P$. Without loss of
generality, we can assume that $0\in D$. If $D$ is a maximal orthogonal
set, then $1$ is the supremum of $D$ in $P$ (Theorem \ref{th:Holland}
(ii)), so we can assume that $D$ is not a maximal orthogonal set. Therefore
the set $V:=\{v\in P:v\not=0\text{\ and\ }d\in D\Rightarrow v\perp d\}$
is nonempty. By Zorn's lemma, there is a maximal commutative subset
$V\sb{1}$ of $V$. For $v\in V\sb{1}$ and $d\in D$, we have $v\perp d$,
so $vCd$, and it follows that $V\sb{1}\cup D$ is a commutative subset of $P$
and that $d\sp{\perp}$ is an upper bound for $V\sb{1}$ in $P$. By Zorn's lemma
again, there is a block $Q$ in $P$ with $V\sb{1}\cup D\subseteq Q$.

By hypothesis, $Q$ is complete; hence, $V\sb{1}$ has a supremum $p$ in $Q$.
For $d\in D$, we have $d\in Q$, whence $d\sp{\perp}\in Q$, so $d\sp{\perp}$
is an upper bound for $V\sb{1}$ in $Q$. Therefore, $p\leq d\sp{\perp}$, so
$d\leq p\sp{\perp}$, whereupon $p\sp{\perp}$ is an upper bound for $D$ in $Q$.
We claim that, in fact, $p\sp{\perp}$ is the supremum of $D$ in $P$, which
will complete the proof.

Thus, suppose that $b$ is an upper bound for $D$ in $P$ and put $c:=
p\sp{\perp}\wedge(p\vee b\sp{\perp})$. Then, $c\leq p\sp{\perp}$, so
$p\leq c\sp{\perp}$, and $v\in V\sb{1}\Rightarrow v\leq p$, whence $v
\in V\sb{1}\Rightarrow v\leq c\sp{\perp}$. In particular, $c\in C(V\sb{1})$.
Also, $d\in D\Rightarrow d\leq p\sp{\perp}\wedge b\leq p\vee(p\sp{\perp}
\wedge b)=c\sp{\perp}\Rightarrow c\perp d$.  If $c\not=0$, it follows that
$c\in V\sb{1}$, whence $c\leq c\sp{\perp}$, and so $c=0$, a contradiction.
Therefore, $c=p\sp{\perp}\wedge(p\vee b\sp{\perp})=0$, and since $p$, hence
also $p\sp{\perp}$ commutes with $p\vee b\sp{\perp}$, we have $p\sp{\perp}
\leq(p\vee b\sp{\perp})\sp{\perp}=p\sp{\perp}\wedge b\leq b$. Thus, $P$ is
complete. A similar argument takes care of the $\sigma$-complete case.
\end{proof}

\begin{corollary} \label{co:everyBsigmacom}
If every C-block in $A$ is Dedekind complete {\rm(}Dedekind
$\sigma$-complete{\rm)}, then $P$ is a complete {\rm(}a
$\sigma$-complete{\rm)} OML.
\end{corollary}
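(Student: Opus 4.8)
The plan is to obtain this as an immediate consequence of Theorem \ref{th:Pcomplete}, simply chaining its two parts. Assume every C-block in $A$ is Dedekind complete; the Dedekind $\sigma$-complete case will be handled in exact parallel, so it suffices to carry the argument through once with the parenthetical modifications understood throughout.

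First I would invoke Theorem \ref{th:Pcomplete}(i): since every C-block in $A$ is Dedekind complete, every block in the OML $P$ is a complete Boolean algebra. Then I would apply Theorem \ref{th:Pcomplete}(ii), whose ``only if'' direction asserts precisely that if every block in $P$ is a complete Boolean algebra, then $P$ itself is a complete OML. Composing these two implications yields the claim. Replacing ``Dedekind complete'' by ``Dedekind $\sigma$-complete'' and ``complete'' by ``$\sigma$-complete'' in the two appeals above gives the $\sigma$-complete statement. There is no real obstacle here — all the substantive work (the reduction to blocks in part (i) via Theorem \ref{th:uniqueblock} and Theorem \ref{th:Dedekind&monotone}, and the orthocompleteness argument in part (ii)) is already carried out in the proof of Theorem \ref{th:Pcomplete}; the corollary is just the combination of its two halves.
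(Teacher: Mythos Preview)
Your proposal is correct and matches the paper's approach: the corollary is stated without a separate proof precisely because it is the immediate concatenation of parts (i) and (ii) of Theorem~\ref{th:Pcomplete}, exactly as you describe.
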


\section{The self-adjoint part of an AW$\sp{\ast}$-algebra} \label{sc:AW*}

In this section, we present some conditions that are equivalent to the
requirement that the synaptic algebra $A$ is isomorphic to the self-adjoint
part of an AW$\sp{\ast}$-algebra.

I. Kaplansky introduced AW$\sp{\ast}$-algebras in \cite{Kap} as algebraic
generalizations of W$\sp{\ast}$ (i.e., von Neumann) algebras. According to
Kaplansky's original definition, an AW$\sp{\ast}$-algebra is a
C$\sp{\ast}$-algebra ${\mathcal C}$ such that (1) the OML of projections in
${\mathcal C}$ is orthocomplete and (2) any maximal commutative
$\ast$-subalgebra of ${\mathcal C}$ is norm-generated by its own projections.
Nowadays, the following equivalent definition \cite[p. 853]{KapMod} (which we
shall use) is often given. In the definition $Sc:=\{sc:s\in S\}$ and
$p\sb{S}{\mathcal C}:=\{p\sb{S}a:a\in{\mathcal C}\}$.

\begin{definition} \label{df:AW*}
An \emph{AW$\sp{\ast}$-algebra} is a C$\sp{\ast}$-algebra ${\mathcal C}$
that is a Baer$\sp{\ast}$ ring {\rm\cite{SKB}}, i.e., for every subset
$S\subseteq{\mathcal C}$, there is a projection $p\sb{S}\in{\mathcal C}$
such that the right annihilator of $S$, namely $\{c\in{\mathcal C}:Sc=
\{0\}\}$, is the principal right ideal $p\sb{S}{\mathcal C}$ of ${\mathcal C}$
generated by the projection $p\sb{S}$.
\end{definition}

If ${\mathcal C}$ is an AW$\sp{\ast}$-algebra, then it is a Rickart
C$\sp{\ast}$-algebra; indeed, if $c\in{\mathcal C}$, take $S:=\{c\}$,
and we have $ca=0\Leftrightarrow a=p\sb{S}a$ for all $a\in A$.
Therefore (Theorem \ref{th:RBSA}), the self-adjoint part of an
AW$\sp{\ast}$-algebra is a Banach synaptic algebra.

\begin{definition} \label{df:BaerCompleteCarrier}
In this definition, if $T\subseteq A$, $a\in A$ and $p\in P$, then $Ta
:=\{ta:t\in T\}$ and $pA:=\{pa:a\in A\}$ for $p\in P$.
\begin{enumerate}
\item[(1)] The synaptic algebra $A$ has the \emph{Baer property} iff, for every
subset $T\subseteq A$, there is a projection $p\sb{T}\in P$ such that $\{a\in A:
Ta=\{0\}\}=p\sb{T}A$.
\item[(2)] $A$ has the \emph{complete carrier property} iff, for every
$T\subseteq A$, there is a projection $q\sb{T}\in P$ such that, for all
$a\in A$, $Ta=\{0\}\Leftrightarrow q\sb{T}a=0$.
\end{enumerate}
\end{definition}

\begin{theorem} \label{th:Baer,CC,completeP}
The following conditions are mutually equivalent{\rm:}
\begin{enumerate}
\item $A$ has the Baer property.
\item $A$ has the complete carrier property.
\item $P$ is a complete OML.
\end{enumerate}
\end{theorem}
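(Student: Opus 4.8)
\textbf{Proof plan for Theorem \ref{th:Baer,CC,completeP}.}
The plan is to establish the cycle (i) $\Rightarrow$ (ii) $\Rightarrow$ (iii) $\Rightarrow$ (i). The implication (i) $\Rightarrow$ (ii) should be the easiest: given the Baer projection $p\sb{T}$ with $\{a\in A:Ta=\{0\}\}=p\sb{T}A$, I would take $q\sb{T}:=1-p\sb{T}$ and verify that $Ta=\{0\}\Leftrightarrow a\in p\sb{T}A\Leftrightarrow a=p\sb{T}a\Leftrightarrow (1-p\sb{T})a=0\Leftrightarrow q\sb{T}a=0$. The middle equivalence $a\in p\sb{T}A\Leftrightarrow a=p\sb{T}a$ needs a small argument: if $a=p\sb{T}b$ then $p\sb{T}a=p\sb{T}\sp{2}b=p\sb{T}b=a$, and conversely $a=p\sb{T}a$ obviously puts $a$ in $p\sb{T}A$.

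For (ii) $\Rightarrow$ (iii), I would show $P$ is orthocomplete and invoke Theorem \ref{th:Holland} (i). Given an orthogonal set $D\subseteq P$, set $T:=D$ and let $q\sb{T}=q\sb{D}$ be the projection from the complete carrier property, so that for all $a\in A$, $Da=\{0\}\Leftrightarrow q\sb{D}a=0$; equivalently (passing to $a$ ranging over $A$, restricted if needed via carriers) $va=0$ for all $v\in D$ iff $q\sb{D}a=0$. Then set $p:=1-q\sb{D}$; by Lemma \ref{lm:V} (applied with $V=D$ and the projection $p$), $p$ is the supremum of $D$ in $P$. Hence every orthogonal subset of $P$ has a supremum, so $P$ is orthocomplete, hence complete by Theorem \ref{th:Holland} (i). A minor point to check is that the hypothesis of Lemma \ref{lm:V} is exactly ``$va=0$ for all $v\in V$ iff $pa=0$,'' which is what the complete carrier property delivers with $p=1-q\sb{D}$.

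For (iii) $\Rightarrow$ (i), suppose $P$ is a complete OML and let $T\subseteq A$ be arbitrary. For each $t\in T$ form the carrier $t\dg\in P$ (axiom SA6), and let $q:=\bigvee\{t\dg:t\in T\}$, which exists since $P$ is complete. Put $p\sb{T}:=1-q=q\sp{\perp}$. I would then show $\{a\in A:Ta=\{0\}\}=p\sb{T}A$. For the inclusion $\supseteq$: if $a=p\sb{T}b=q\sp{\perp}b$ then $qa=0$, and since each $t\dg\leq q$ we get $t\dg a=0$, hence (using $tb=0\Leftrightarrow t\dg b=0$ from the carrier axiom, as recorded in the SA6 comment) $ta=0$ for all $t\in T$, i.e.\ $Ta=\{0\}$. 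For the inclusion $\subseteq$: if $Ta=\{0\}$ then $ta=0$, so $t\dg a=0$, so $t\dg\leq 1-a\dg$ for every $t\in T$; since $q$ is the supremum of the $t\dg$ in $P$ we get $q\leq 1-a\dg$, hence $qa\dg=0$, hence $qa=0$, hence $a=q\sp{\perp}a=p\sb{T}a\in p\sb{T}A$. Here I would lean on Theorem \ref{th:SupContinuity} (ii) (with $V=\{t\dg:t\in T\}$ and supremum $q$) to pass cleanly from ``$t\dg a=0$ for all $t$'' to ``$qa=0$,'' rather than reproving it by hand.

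The main obstacle I anticipate is purely bookkeeping: making sure the quantifier over $a$ is handled correctly when translating between the Baer property ($\{a:Ta=\{0\}\}=p\sb{T}A$), the complete carrier property ($Ta=\{0\}\Leftrightarrow q\sb{T}a=0$), and the lattice-theoretic statement, and making sure each use of the carrier axiom SA6 is in the right form ($tb=0\Leftrightarrow t\dg b=0$ on both sides). There is no deep difficulty: once Theorem \ref{th:Holland}, Theorem \ref{th:SupContinuity} (ii), and Lemma \ref{lm:V} are in hand, the proof is a short chain of equivalences built around taking $p\sb{T}=1-\bigvee\{t\dg:t\in T\}$.
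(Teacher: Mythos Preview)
Your overall strategy matches the paper's: translate between Baer and complete carrier by complementing the projection, and link to completeness of $P$ via the carrier supremum $\bigvee\{t\dg:t\in T\}$ together with Theorem \ref{th:SupContinuity} (ii) and Lemma \ref{lm:V}. Two remarks.

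First, there is a sign slip in your (ii) $\Rightarrow$ (iii). The complete carrier property gives $Da=\{0\}\Leftrightarrow q\sb{D}a=0$, which is \emph{exactly} the hypothesis of Lemma \ref{lm:V} with $p=q\sb{D}$, not with $p=1-q\sb{D}$. Thus $q\sb{D}$ itself is the supremum of $D$ in $P$; your $p:=1-q\sb{D}$ would satisfy $Da=\{0\}\Leftrightarrow a=pa$, which is the Rickart/Baer form, not the carrier form that Lemma \ref{lm:V} consumes. This is the very ``bookkeeping'' hazard you anticipated.

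Second, the detour through orthocompleteness and Theorem \ref{th:Holland} is unnecessary: Lemma \ref{lm:V} does not require $V$ to be orthogonal, so you can take an arbitrary $V\subseteq P$, apply the complete carrier property to $T:=V$, and conclude directly (as the paper does) that $q\sb{V}$ is the supremum of $V$ in $P$. Your (iii) $\Rightarrow$ (i) is fine and is the same idea as the paper's (iii) $\Rightarrow$ (ii), simply pushed one step further to the Baer formulation.
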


\begin{proof}
(i) $\Leftrightarrow$ (ii). With the notation of Definition
\ref{df:BaerCompleteCarrier}, if $T\subseteq A$ and $A$ has the Baer
property, put $q\sb{T}:=1-p\sb{T}$, and if $A$ has the complete carrier
property, put $p\sb{T}:=1-q\sb{T}$.

\medskip

(ii) $\Rightarrow$ (iii). Assume (ii) and let $V\subseteq P$. Then there
is a projection $q\sb{V}\in P$ such that, for all $a\in A$, $Va=\{0\}
\Leftrightarrow q\sb{V}a=0$, and by Lemma \ref{lm:V}, $q\sb{V}$ is the
supremum of $V$ in the OML $P$. By duality, every subset of the OML $P$
has an infimum.

\medskip

(iii) $\Rightarrow$ (ii). Assume (iii), let $T\subseteq A$, put $S:=
\{t\dg:t\in T\}\subseteq P$, and let $r$ be the supremum of $S$ in
$P$.  By Theorem \ref{th:SupContinuity} (ii), for every $a\in A$, $Sa
=0\Leftrightarrow  ra=0$. Now $ra=0\Leftrightarrow t\dg a=0, \forall t
\in T\Leftrightarrow ta=0,\forall t\in T\Leftrightarrow Ta=\{0\}$, whence
$A$ satisfies the condition in Definition \ref{df:BaerCompleteCarrier}
(2) with $q\sb{T}=r$.
\end{proof}

\begin{theorem} \label{th:Baer}
Let ${\mathcal C}$ be a Rickart C$\sp{\ast}$ algebra and
organize the self-adjoint part ${\mathcal C}\sp{sa}$ of
${\mathcal C}$ into a Banach synaptic algebra {\rm(Theorem
\ref{th:RBSA})}. Then ${\mathcal C}$ is an AW$\sp{\ast}$-algebra
iff ${\mathcal C}\sp{sa}$ has the Baer property.
\end{theorem}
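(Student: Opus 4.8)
The plan is to funnel both implications through the condition that the orthomodular lattice $P$ of projections is complete. Note first that $P$ — the self-adjoint idempotents of $\mathcal{C}$, with the order inherited from $(\mathcal{C}^{sa})^{+}$ — is the same lattice whether it is viewed inside $\mathcal{C}$ or inside the synaptic algebra $A:=\mathcal{C}^{sa}$. By Theorem \ref{th:Baer,CC,completeP} applied to $A$, the synaptic algebra $\mathcal{C}^{sa}$ has the Baer property if and only if $P$ is a complete OML. So it will suffice to prove that $\mathcal{C}$ is an AW$^{\ast}$-algebra if and only if $P$ is complete, and I would organize the two directions as follows.

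For the implication that an AW$^{\ast}$-algebra has a complete projection lattice, I would take an arbitrary $S\subseteq P$ and let $q_{S}$ be the projection with right annihilator $\{c\in\mathcal{C}:Sc=\{0\}\}=q_{S}\mathcal{C}$ (Definition \ref{df:AW*}). Since $q_{S}\in q_{S}\mathcal{C}$, one has $sq_{S}=0$, hence, taking adjoints, $q_{S}s=0$, for every $s\in S$; as $q_{S}$ and $s$ are projections this gives $s\leq 1-q_{S}$. Conversely, if $r\in P$ is any upper bound of $S$, then $s(1-r)=0$ for all $s$, so $1-r\in q_{S}\mathcal{C}$, whence $q_{S}(1-r)=1-r$ and therefore $1-q_{S}\leq r$. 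Thus $\bigvee S=1-q_{S}$ exists in $P$, and since $S$ was arbitrary, $P$ is complete. This direction is pure projection-lattice bookkeeping and should be routine.

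The substantive direction is the converse. Assuming $P$ complete and given $S\subseteq\mathcal{C}$, I would use the Rickart property to pick, for each $s\in S$, a projection $f_{s}\in P$ with $sc=0\Leftrightarrow f_{s}c=0$ for all $c\in\mathcal{C}$ (namely $f_{s}:=1-s^{\prime}$ with $s^{\prime}$ as in Definition \ref{df:Rickart}(1)), and set $p:=\bigvee_{s\in S}f_{s}$. The goal is to show $\{c\in\mathcal{C}:Sc=\{0\}\}=(1-p)\mathcal{C}$, which exhibits $\mathcal{C}$ as a Baer$^{\ast}$ ring with $p_{S}:=1-p$. The step I expect to be the main obstacle is that Theorem \ref{th:SupContinuity}(ii) — the tool relating a supremum in $P$ to annihilation — is stated only for \emph{self-adjoint} elements, whereas $c$ here ranges over all of $\mathcal{C}$. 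I would get around this by passing to $cc^{\ast}\in\mathcal{C}^{sa}$: for any projection $g$, $gcc^{\ast}=0$ forces $(gc)(gc)^{\ast}=gcc^{\ast}g=0$ and hence $gc=0$ by Lemma \ref{lm:cc*}(i), so $gc=0\Leftrightarrow gcc^{\ast}=0$. Applying this with $g=f_{s}$, then Theorem \ref{th:SupContinuity}(ii) to the family $\{f_{s}:s\in S\}\subseteq P$ and the self-adjoint element $cc^{\ast}$, and then the same device once more with $g=p$, I would obtain, for $c\in\mathcal{C}$,
\[
Sc=\{0\}\iff(\forall s)\ f_{s}c=0\iff(\forall s)\ f_{s}cc^{\ast}=0\iff p\,cc^{\ast}=0\iff pc=0\iff c=(1-p)c,
\]
i.e.\ $\{c\in\mathcal{C}:Sc=\{0\}\}=(1-p)\mathcal{C}$, as desired. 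Combining the two directions with Theorem \ref{th:Baer,CC,completeP} then finishes the proof; the only real work is the self-adjoint-to-general reduction via $cc^{\ast}$ in the converse.
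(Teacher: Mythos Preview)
Your argument is correct, but it takes a different route from the paper's. The paper argues directly between the two conditions without detouring through completeness of $P$. For the forward implication it simply restricts the AW$^{\ast}$ annihilator condition to self-adjoint subsets $T\subseteq\mathcal{C}^{sa}$ and self-adjoint test elements $a$, which immediately yields the Baer property for $\mathcal{C}^{sa}$. For the converse, given $S\subseteq\mathcal{C}$ the paper replaces $S$ by the self-adjoint set $T:=\{s^{\ast}s:s\in S\}$, applies the Baer property of $\mathcal{C}^{sa}$ to obtain a single projection $p_{T}$, and then---using exactly the same $c\mapsto cc^{\ast}$ device you identified---shows $sc=0$ for all $s\in S$ iff $c=p_{T}c$. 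Your version instead pivots through Theorem~\ref{th:Baer,CC,completeP} and proves ``$\mathcal{C}$ is AW$^{\ast}$ $\Leftrightarrow$ $P$ is complete,'' invoking the Rickart property to associate a projection $f_{s}$ to each $s\in S$, forming $p=\bigvee_{s}f_{s}$, and then using Theorem~\ref{th:SupContinuity}(ii) for the annihilation step. Both approaches ultimately hinge on the same self-adjoint reduction via $cc^{\ast}$ and Lemma~\ref{lm:cc*}; the paper's argument is shorter and more self-contained (it needs neither Theorem~\ref{th:Baer,CC,completeP} nor Theorem~\ref{th:SupContinuity}), while yours has the virtue of isolating the well-known intermediate characterization ``AW$^{\ast}$ $\Leftrightarrow$ projection lattice complete'' and making explicit use of the Rickart hypothesis appearing in the statement.
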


\begin{proof} Suppose that ${\mathcal C}$ is an AW$\sp{\ast}$-algebra
and, $T\subseteq{\mathcal C}\sp{sa}$, and let $a\in{\mathcal C}\sp{sa}$.
Then $T\subseteq{\mathcal C}$ and $a\in C$,  whence, there is a
projection $p\sb{T}$ in ${\mathcal C}\sp{sa}$ such that, $Ta=\{0\}
\Leftrightarrow a=p\sb{T}a$. Therefore ${\mathcal C}\sp{sa}$ has the
Baer property.

Conversely, suppose that ${\mathcal C}\sp{sa}$ has the Baer property,
$S\subseteq{\mathcal C}$, and $c\in{\mathcal C}$. Then $T:=\{s
\sp{\ast}s:s\in S\}\subseteq{\mathcal C}\sp{sa}$, so there exists a
projection $p\sb{T}\in{\mathcal C}\sp{sa}$ such that, for all $a\in
{\mathcal C}\sp{sa}$, $Ta=\{0\}\Leftrightarrow a=p\sb{T}a$. Putting
$a=cc\sp{\ast}$, we obtain $Tcc\sp{\ast}=\{0\}\Leftrightarrow
cc\sp{\ast}=p\sb{T}cc\sp{\ast}$, whence by Lemma \ref{lm:cc*} (ii),
for all $c\in{\mathcal C}$,
\begin{equation} \label{eq:1}
s\sp{\ast}scc\sp{\ast}=0\text{\ for all\ }s\in S\Leftrightarrow c=
p\sb{T}c.
\end{equation}
To prove that ${\mathcal C}$ is an AW$\sp{\ast}$-algebra, it will be
sufficient to prove that $sc=0\text{\ for all\ }s\in S\Leftrightarrow c
=p\sb{T}c$. If $sc=0$ for all $s\in S$, then $s\sp{\ast}scc\sp{\ast}=0$
for all $s\in S$, so $c=p\sb{T}c$ by (\ref{eq:1}). Conversely, suppose
that $c=p\sb{T}c$. Putting $c=p\sb{T}$ in (\ref{eq:1}) and observing
that $p\sb{T}=p\sb{T}p\sb{T}$, we find that, for all $s\in S$, $s
\sp{\ast}sp\sb{T}=s\sp{\ast}sp\sb{T}p\sb{T}\sp{\ast}=0$, whence
$(sp\sb{T})\sp{\ast}sp\sb{T}=p\sb{T}s\sp{\ast}sp\sb{T}=0$, and it
follows that $sp\sb{T}=0$. Therefore, $sc=sp\sb{T}c=0$.
\end{proof}

\begin{theorem} \label{th:AW*conditions}
The synaptic algebra $A$ is isomorphic to the self-adjoint part of an
AW$\sp{\ast}$-algebra iff it is Banach and satisfies any one {\rm(}hence
all{\rm)} of the following equivalent conditions{\rm: (i)} $A$ has the
Baer property. {\rm (ii)} $A$ has the complete carrier property.
{\rm (iii)} The OML $P$ of projections in $A$ is complete.
\end{theorem}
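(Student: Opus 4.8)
The plan is to establish the theorem by stitching together the equivalences already available in the paper. Recall that by Theorem~\ref{th:Baer,CC,completeP} the three conditions (i), (ii), (iii) are mutually equivalent for \emph{any} synaptic algebra, so once one of them is shown to correspond to the ``AW$\sp{\ast}$'' property the rest follow for free. Thus the real content is the biconditional: $A$ is isomorphic to the self-adjoint part of an AW$\sp{\ast}$-algebra iff $A$ is Banach and has the Baer property.

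For the forward direction, suppose $A\cong{\mathcal C}\sp{sa}$ for an AW$\sp{\ast}$-algebra ${\mathcal C}$. As noted just after Definition~\ref{df:AW*}, an AW$\sp{\ast}$-algebra is a Rickart C$\sp{\ast}$-algebra, so by Theorem~\ref{th:RBSA} its self-adjoint part is a Banach synaptic algebra; hence $A$ is Banach. That ${\mathcal C}\sp{sa}$ has the Baer property is precisely the forward half of Theorem~\ref{th:Baer}: given $T\subseteq{\mathcal C}\sp{sa}$ and $a\in{\mathcal C}\sp{sa}$, the Baer$\sp{\ast}$-ring structure of ${\mathcal C}$ supplies a projection $p\sb{T}$ with $Ta=\{0\}\Leftrightarrow a=p\sb{T}a$, and this $p\sb{T}$ lies in ${\mathcal C}\sp{sa}$. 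Transporting along the synaptic isomorphism, $A$ has the Baer property.

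For the converse, assume $A$ is Banach and has the Baer property (equivalently, by Theorem~\ref{th:Baer,CC,completeP}, any one of (i)--(iii)). Since $A$ is Banach, Theorem~\ref{th:RBSA} gives a synaptic isomorphism of $A$ onto ${\mathcal C}\sp{sa}$ for some Rickart C$\sp{\ast}$-algebra ${\mathcal C}$; transporting the Baer property across this isomorphism, ${\mathcal C}\sp{sa}$ has the Baer property. Now Theorem~\ref{th:Baer} applies verbatim and tells us that ${\mathcal C}$ is an AW$\sp{\ast}$-algebra. Hence $A$ is isomorphic to the self-adjoint part of an AW$\sp{\ast}$-algebra, as required.

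The only point that needs a word of care — and the one I would flag as the mildest obstacle — is the verification that the Baer property (and likewise the complete carrier property and completeness of $P$) is genuinely preserved under a synaptic isomorphism. This is essentially immediate: a synaptic isomorphism $\Phi\colon A\to{\mathcal C}\sp{sa}$ is an order and Jordan isomorphism, hence preserves $1$, projections, the relation ``$ta=0$'' (since $ta=0\Leftrightarrow t\odot a=0$ and $t$, $a$ need not commute, so one argues via $U$-operators or via the carrier characterization $ta=0\Leftrightarrow t\dg a\dg=0$ recorded in the discussion of SA6), and therefore carries the witnessing projection $p\sb T$ for $A$ to a witnessing projection $\Phi(p\sb T)$ for ${\mathcal C}\sp{sa}$, and conversely. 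Once this bookkeeping is in place the theorem is just an assembly of Theorems~\ref{th:RBSA}, \ref{th:Baer,CC,completeP}, and~\ref{th:Baer}.
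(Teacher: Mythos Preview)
Your proposal is correct and follows essentially the same route as the paper: invoke Theorem~\ref{th:Baer,CC,completeP} for the equivalence of (i)--(iii), use Theorem~\ref{th:Baer} for the forward direction, and combine Theorems~\ref{th:RBSA} and~\ref{th:Baer} for the converse. Your explicit attention to why the Baer property transfers along a synaptic isomorphism (via the carrier characterization $ta=0\Leftrightarrow t\dg a\dg=0$ and the order relation $t\dg\leq 1-a\dg$ between projections) is a point the paper passes over in silence, so your write-up is if anything more careful here.
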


\begin{proof}
Conditions (i)--(iii) are mutually equivalent by Theorem \ref
{th:Baer,CC,completeP}. Suppose that $A$ is isomorphic to the self-adjoint
part ${\mathcal C}\sp{sa}$ of an AW$\sp{\ast}$-algebra ${\mathcal C}$.
Than $A$ is Banach and by Theorem \ref{th:Baer}, ${\mathcal C}\sp{sa}$,
hence also $A$, has the Baer property. Conversely, suppose that $A$ is
Banach and satisfies any one of the equivalent conditions (i), (ii),
or (iii). Then it satisfies (i), and since it is Banach, it is
isomorphic to the self-adjoint part ${\mathcal C}\sp{sa}$ of a
Rickart C$\sp{\ast}$-algebra ${\mathcal C}$ by Theorem \ref{th:RBSA}.
But then, ${\mathcal C}\sp{sa}$ has the Baer property, so ${\mathcal
C}$ is an AW$\sp{\ast}$-algebra by Theorem \ref{th:Baer}.
\end{proof}

In \cite{SaWr}, K. Sait\^o and J.D.M. Wright proved that a
C$\sp{\ast}$-algebra ${\mathcal C}$ is an AW$\sp{\ast}$-algebra
iff every maximal abelian ${\ast}$-subalgebra of ${\mathcal C}$ is
monotone complete. The equivalence of parts (i) and (v) in the
following theorem can be regarded as an analogue for synaptic
algebras of the Sait\^o-Wright theorem.

\begin{theorem} \label{th:ABanachPcomplete}
Let $A$ be a Banach synaptic algebra. Then the following conditions are
mutually equivalent{\rm: (i)} Every C-block in $A$ is monotone complete.
{\rm(ii)} Every C-block in $A$ is Dedekind complete. {\rm(iii)} Every
block in $P$ is a complete Boolean algebra. {\rm(iv)} $P$ is a complete OML.
{\rm(v)} $A$ is isomorphic to the self-adjoint part of an AW$\sp{\ast}$-algebra
\end{theorem}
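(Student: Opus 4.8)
The plan is to prove Theorem \ref{th:ABanachPcomplete} by establishing a cycle of implications among (i)--(v), drawing almost entirely on results already proved in the excerpt. First I would observe that (ii) $\Rightarrow$ (i) is immediate from Theorem \ref{th:Dedekind&monotone} (i), since every C-block $B$ is a commutative synaptic algebra (and by Theorem \ref{th:BanachSAprops} a commutative Banach one), so monotone completeness and Dedekind completeness coincide for it. Conversely (i) $\Rightarrow$ (ii) is the same equivalence read the other way. Thus (i) $\Leftrightarrow$ (ii) costs nothing.

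Next I would handle (ii) $\Rightarrow$ (iii): this is exactly Theorem \ref{th:Pcomplete} (i), which says that if every C-block in $A$ is Dedekind complete then every block in $P$ is a complete Boolean algebra. Then (iii) $\Leftrightarrow$ (iv) is Theorem \ref{th:Pcomplete} (ii). So far we have (i) $\Leftrightarrow$ (ii) $\Rightarrow$ (iii) $\Leftrightarrow$ (iv) with no real work. To close the loop I would prove (iv) $\Rightarrow$ (v) and (v) $\Rightarrow$ (i) (or (v) $\Rightarrow$ (ii)). For (iv) $\Rightarrow$ (v): assuming $A$ is Banach and $P$ is a complete OML, condition (iii) of Theorem \ref{th:Baer,CC,completeP} holds, hence $A$ has the Baer property, so by Theorem \ref{th:AW*conditions} $A$ is isomorphic to the self-adjoint part of an AW$\sp{\ast}$-algebra. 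For (v) $\Rightarrow$ (iv): if $A \cong {\mathcal C}\sp{sa}$ with ${\mathcal C}$ an AW$\sp{\ast}$-algebra, then $A$ has the Baer property (again Theorem \ref{th:AW*conditions} or directly Theorem \ref{th:Baer}), hence $P$ is complete by Theorem \ref{th:Baer,CC,completeP}. That already closes a cycle (i) $\Leftrightarrow$ (ii) $\Rightarrow$ (iii) $\Leftrightarrow$ (iv) $\Leftrightarrow$ (v), but I still need some arrow back from (iv) or (v) into (i) or (ii) to make the whole thing mutually equivalent; the cleanest is (v) $\Rightarrow$ (ii), proving that each C-block of the self-adjoint part of an AW$\sp{\ast}$-algebra is Dedekind complete.

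The main obstacle, then, is the implication from ``$A \cong {\mathcal C}\sp{sa}$ for an AW$\sp{\ast}$-algebra ${\mathcal C}$'' to ``every C-block $B$ of $A$ is Dedekind (equivalently monotone) complete.'' I expect to argue as follows: given a C-block $B$ of $A$, by Theorem \ref{th:uniqueblock} $Q := P \cap B$ is a block in $P$ and $B = C(Q)$. Since (v) gives that $P$ is a complete OML (via the arrow just established), Lemma \ref{lm:supinblock} (iii) shows $Q = P \cap B$ is a complete Boolean algebra. Then $B$ is a commutative Banach synaptic algebra (Theorem \ref{th:BanachSAprops}) whose projection lattice $B \cap P = Q$ is a complete Boolean algebra, so by Theorem \ref{th:ComBanach} ((i) $\Rightarrow$ (ii)), $B$ is Dedekind complete. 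This also yields (iv) $\Rightarrow$ (ii) directly, so in fact I could organize the proof as the chain (i) $\Leftrightarrow$ (ii) via Theorem \ref{th:Dedekind&monotone} (i), (ii) $\Rightarrow$ (iii) via Theorem \ref{th:Pcomplete} (i), (iii) $\Leftrightarrow$ (iv) via Theorem \ref{th:Pcomplete} (ii), (iv) $\Rightarrow$ (ii) via Lemma \ref{lm:supinblock} (iii), Theorem \ref{th:BanachSAprops} and Theorem \ref{th:ComBanach}, and finally (iv) $\Leftrightarrow$ (v) via Theorem \ref{th:Baer,CC,completeP} together with Theorem \ref{th:AW*conditions}. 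The whole proof is thus a matter of correctly quoting and chaining prior results; the only genuinely substantive point is recognizing that completeness of $P$ descends to completeness of the projection lattice of each C-block (Lemma \ref{lm:supinblock} (iii)) and then lifts back to Dedekind completeness of the C-block itself via the commutative Banach case (Theorem \ref{th:ComBanach}).
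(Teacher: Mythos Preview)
Your proposal is correct and follows essentially the same route as the paper: the paper also uses Theorem~\ref{th:Dedekind&monotone}(i) for (i)$\Leftrightarrow$(ii), Theorem~\ref{th:Pcomplete} for (ii)$\Rightarrow$(iii)$\Leftrightarrow$(iv), Lemma~\ref{lm:supinblock}(iii) together with Theorem~\ref{th:ComBanach} for the step back (iv)$\Rightarrow$(i) (you phrase it as (iv)$\Rightarrow$(ii), which is equivalent), and Theorem~\ref{th:AW*conditions} for (iv)$\Leftrightarrow$(v). Your identification of the only substantive point---that completeness of $P$ descends to $P\cap B$ and then lifts to Dedekind completeness of $B$ via the commutative Banach case---is exactly the core of the paper's argument.
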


\begin{proof}
That (i) $\Leftrightarrow$ (ii) follows from Theorem \ref{th:Dedekind&monotone}
(i). By Theorem \ref{th:Pcomplete}, we have (ii) $\Rightarrow$ (iii)
$\Leftrightarrow$ (iv). We claim that (iv) $\Rightarrow$ (i). Thus, assume that
$A$ is Banach, $P$ is a complete OML, and $B$ is a C-block in $A$. Then $B$ is
a norm-closed, hence Banach, commutative synaptic algebra under the restrictions
of the partial order and operations on $A$, and $P\cap B$ is the Boolean algebra of projections in $B$. By Lemma \ref{lm:supinblock} (iii), $P\cap B$ is a complete
Boolean algebra, and, applying Theorem \ref{th:ComBanach} to the commutative Banach
synaptic algebra $B$, we infer that $B$ is monotone complete. Thus, conditions
(i)--(iv) are mutually equivalent, and by Theorem \ref{th:AW*conditions}, (iv)
$\Leftrightarrow$ (v).
\end{proof}

\section{Synaptic algebras and GH-algebras} \label{sc:SA&GH}

Axioms for a \emph{generalized Hermitian algebra} (GH-algebra) can be
found in \cite[Definition 2.1]{GHAlg1}. By the discussion in \cite
[\S 6]{FSyn} and Theorem \ref{th:CC(T)}, we have the following.

\begin{theorem}\label{th:GH}
A GH-algebra is the same thing as a synaptic algebra $A$ such that
every bounded monotone increasing sequence $a\sb{1}\leq a\sb{2}
\leq\cdots$ of pairwise commuting elements in $A$ has a supremum
in $A$.
\end{theorem}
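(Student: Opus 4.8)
The plan is to prove the two directions of the equivalence between GH-algebras and synaptic algebras satisfying the stated monotone-completeness condition, leaning on the cited discussion in \cite[\S 6]{FSyn} together with Theorem \ref{th:CC(T)}. First I would recall the definition of a GH-algebra from \cite[Definition 2.1]{GHAlg1}: a GH-algebra is a synaptic algebra in which every bounded monotone increasing sequence of \emph{pairwise commuting} elements has a supremum (say $b$) in $A$ \emph{and} that supremum lies in the bicommutant of the sequence, i.e. $b$ commutes with everything that commutes with every term of the sequence. So the only apparent gap between ``GH-algebra'' and ``synaptic algebra with the bounded monotone commuting sequence sup property'' is the bicommutant clause on the supremum.

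The forward direction is immediate: if $A$ is a GH-algebra then by definition it is a synaptic algebra, and by definition every bounded monotone increasing sequence of pairwise commuting elements has a supremum in $A$; one simply discards the extra information about the bicommutant. For the reverse direction, suppose $A$ is a synaptic algebra such that every bounded monotone increasing sequence $a\sb{1}\leq a\sb{2}\leq\cdots$ of pairwise commuting elements has a supremum $b$ in $A$. I would then let $T:=\{a\sb{n}:n\in\Nat\}$, observe that $T$ is a commutative subset of $A$ and that $b=\sup T$ in $A$, and invoke Theorem \ref{th:CC(T)} to conclude $b\in CC(T)$. This is exactly the bicommutant condition missing from the bare supremum hypothesis, so $A$ satisfies all the axioms for a GH-algebra. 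I should also double-check that the ``supremum in $A$'' appearing in the GH axioms really coincides with the supremum with respect to the order-unit partial order on $A$ (it does, by the way suprema are defined in Section \ref{sc:Additional}), so that Theorem \ref{th:CC(T)} applies verbatim.

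The main obstacle — really the only substantive point — is making sure that the axiom list for a GH-algebra in \cite[Definition 2.1]{GHAlg1} does not contain some additional requirement beyond (a) being a synaptic algebra and (b) the monotone completeness of bounded commuting increasing sequences with the bicommutant refinement. If the GH-algebra axioms are instead phrased in a self-contained way (not presupposing the synaptic algebra axioms), then the argument must additionally show that those standalone GH-axioms are jointly equivalent to SA1--SA8 plus the sequence condition; but the phrase ``By the discussion in \cite[\S 6]{FSyn}'' signals precisely that this equivalence of axiom systems has already been established there, so I would cite it rather than reprove it. Thus the proof reduces to: (forward) strip off the bicommutant clause; (reverse) restore it via Theorem \ref{th:CC(T)}; and in both directions appeal to \cite[\S 6]{FSyn} for the translation between the GH axiom list and the synaptic-algebra axiom list.
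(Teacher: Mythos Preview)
Your proposal is correct and matches the paper's approach exactly: the paper's entire proof is the single sentence ``By the discussion in \cite[\S 6]{FSyn} and Theorem \ref{th:CC(T)}, we have the following,'' and you have correctly unpacked what those two citations do --- \cite[\S 6]{FSyn} identifies the GH-algebra axioms with SA1--SA8 plus the bounded-monotone-commuting-supremum condition (with the bicommutant refinement), and Theorem \ref{th:CC(T)} shows that in any synaptic algebra the supremum of such a sequence automatically lies in its bicommutant, rendering that refinement redundant.
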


\begin{corollary} \label{co:MsigmaComisGH}
If $A$ is monotone $\sigma$-complete, then $A$ is a Banach GH-algebra.
\end{corollary}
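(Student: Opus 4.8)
The plan is to prove the two halves of the conclusion separately. That $A$ is a GH-algebra is immediate: by Theorem~\ref{th:GH} a GH-algebra is precisely a synaptic algebra in which every bounded monotone increasing sequence of pairwise commuting elements has a supremum, and if $A$ is monotone $\sigma$-complete then \emph{every} bounded monotone increasing sequence in $A$ has a supremum --- in particular the pairwise commuting ones do. So the real content is to show that a monotone $\sigma$-complete synaptic algebra is norm-complete.

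To do that, I would take a Cauchy sequence $(c_n)$ in $A$, pass to a subsequence $a_j := c_{n_j}$ with $\|a_{j+1}-a_j\| \le 2^{-j}$, and then \emph{sandwich} it between a monotone increasing and a monotone decreasing sequence: put $l_j := a_j - 2^{1-j}1$ and $u_j := a_j + 2^{1-j}1$. Using Lemma~\ref{lm:NormProps}(i) to convert the norm bound $\|a_{j+1}-a_j\|\le 2^{-j}$ into the order bound $-2^{-j}1 \le a_{j+1}-a_j \le 2^{-j}1$, one checks routinely that $(l_j)$ is increasing, $(u_j)$ is decreasing, and $l_j \le u_k$ for all $j,k$. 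Then $(l_j)$ is a bounded monotone increasing sequence, so monotone $\sigma$-completeness produces $a := \sup_j l_j \in A$; and since each $u_k$ is an upper bound for $\{l_j : j \in \Nat\}$, minimality of the supremum gives $a \le u_k$ for every $k$. Hence $l_j \le a \le u_j$, i.e. $-2^{1-j}1 \le a - a_j \le 2^{1-j}1$, and Lemma~\ref{lm:NormProps}(ii) yields $\|a - a_j\| \le 2^{1-j} \to 0$. So the subsequence converges to $a$ in norm, and since $(c_n)$ is Cauchy it converges to $a$ as well; thus $A$ is Banach, and combining the two halves, $A$ is a Banach GH-algebra.

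The only place where the argument is more than bookkeeping is the passage from the order supremum to a norm limit: a supremum in an order-unit space need not be a norm limit, and one cannot simply take the supremum of $(a_j)$ (which need not even exist, the sequence not being monotone). The perturbations $\pm 2^{1-j}1$ are chosen precisely to restore genuine monotonicity while remaining norm-small, which is what squeezes $a$ to within $2^{1-j}$ of $a_j$. I would note that the increments $a_{j+1}-a_j$ need not commute, but this causes no difficulty, since monotone $\sigma$-completeness applies to arbitrary bounded increasing sequences, not merely to commuting ones --- so no reduction to a C-block is needed. (If one prefers, the norm-completeness of a monotone $\sigma$-complete synaptic algebra can instead be quoted from the GH-algebra literature, after which the corollary follows from Theorem~\ref{th:GH} alone.)
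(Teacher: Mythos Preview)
Your argument is correct. The GH-algebra half is identical to the paper's (both invoke Theorem~\ref{th:GH}). For the Banach half, the paper simply cites \cite[Theorem 2.4]{FJPLS}, whereas you supply a direct sandwich argument in the order-unit space: perturb the rapidly Cauchy subsequence by $\pm 2^{1-j}1$ to force monotonicity, take the supremum of the lower sequence via monotone $\sigma$-completeness, and squeeze. This is the standard device for showing that monotone $\sigma$-completeness implies norm completeness in archimedean order-unit spaces, and is presumably what lies behind the cited result; your version has the virtue of being self-contained within the paper. Your closing parenthetical (quote the GH-algebra literature instead) is in fact exactly the route the paper takes.
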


\begin{proof}
If $A$ is monotone $\sigma$-complete, then $A$ is a GH-algebra by
Theorem \ref{th:GH} and $A$ is Banach by \cite[Theorem 2.4]{FJPLS}.
\end{proof}

\begin{theorem}\label{th:basacom}
Suppose that $A$ is a commutative synaptic algebra. Then the following
conditions are mutually equivalent{\rm: (i)} $A$ is Dedekind
$\sigma$-complete. {\rm(ii)} $A$ is monotone $\sigma$-complete. {\rm(iii)}
$A$ is Banach.  {\rm(iv)} $A$ is a GH-algebra.
\end{theorem}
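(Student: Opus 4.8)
The plan is to establish the cycle of implications (iv) $\Rightarrow$ (iii) $\Rightarrow$ (i) $\Rightarrow$ (ii) $\Rightarrow$ (iv), drawing on results already assembled in the excerpt. First I would note that (ii) $\Rightarrow$ (iv) is immediate from Theorem~\ref{th:GH}: a monotone $\sigma$-complete synaptic algebra certainly has suprema of all bounded monotone increasing sequences of pairwise commuting elements, so it is a GH-algebra. For (iv) $\Rightarrow$ (iii), invoke \cite[Theorem 2.4]{FJPLS} (already cited in the proof of Corollary~\ref{co:MsigmaComisGH}), which says every GH-algebra is Banach. Alternatively, since $A$ is commutative here, one can appeal directly to Theorem~\ref{th:basacom}'s commutative setting, but the GH route is cleanest.

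The substantive direction is (iii) $\Rightarrow$ (i): if a \emph{commutative} synaptic algebra $A$ is Banach, then $A$ is Dedekind $\sigma$-complete. This is exactly Theorem~\ref{th:ComBanachsigma}, which asserts that a commutative Banach synaptic algebra is both Dedekind and monotone $\sigma$-complete. So this implication requires no new work beyond citing that theorem. Finally (i) $\Rightarrow$ (ii): a Dedekind $\sigma$-complete poset is monotone $\sigma$-complete essentially by definition, since every increasing bounded sequence is in particular a bounded sequence; more precisely, in the commutative case Theorem~\ref{th:Dedekind&monotone}(i) gives the equivalence of Dedekind $\sigma$-completeness and monotone $\sigma$-completeness for $A$. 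This closes the cycle.

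I expect no serious obstacle here, since every link in the chain is supplied by a theorem stated earlier in the paper; the proof is a matter of assembling Theorem~\ref{th:GH}, Corollary~\ref{co:MsigmaComisGH} (or \cite[Theorem 2.4]{FJPLS} directly), Theorem~\ref{th:ComBanachsigma}, and Theorem~\ref{th:Dedekind&monotone}(i) in the right order. The one point to state carefully is that Theorem~\ref{th:GH}'s characterization of GH-algebras is in terms of suprema of bounded monotone increasing sequences of \emph{pairwise commuting} elements — but when $A$ is commutative this condition is just monotone $\sigma$-completeness of $A$, so (ii) and (iv) coincide without friction. Thus the proof can be written in a few lines:

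\begin{proof}
Assume $A$ is commutative.

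(iv) $\Rightarrow$ (iii): A GH-algebra is Banach by \cite[Theorem 2.4]{FJPLS}.

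(iii) $\Rightarrow$ (i) and (iii) $\Rightarrow$ (ii): By Theorem \ref{th:ComBanachsigma}, a commutative Banach synaptic algebra is both Dedekind and monotone $\sigma$-complete.

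(i) $\Leftrightarrow$ (ii): This is Theorem \ref{th:Dedekind&monotone} (i).

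(ii) $\Rightarrow$ (iv): If $A$ is commutative and monotone $\sigma$-complete, then every bounded monotone increasing sequence in $A$ (automatically of pairwise commuting elements) has a supremum in $A$, so $A$ is a GH-algebra by Theorem \ref{th:GH}.
\end{proof}
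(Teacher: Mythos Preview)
Your proposal is correct and takes essentially the same approach as the paper: both arguments assemble Theorem~\ref{th:ComBanachsigma} for (iii) $\Rightarrow$ (i), \cite[Theorem~2.4]{FJPLS} for the passage to Banach, and Theorem~\ref{th:GH} for the equivalence of (ii) and (iv) under commutativity. The only cosmetic differences are the order in which the implications are chained and your citation of Theorem~\ref{th:Dedekind&monotone}(i) for (i) $\Leftrightarrow$ (ii), where the paper simply remarks that (i) $\Rightarrow$ (ii) is obvious.
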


\begin{proof} Obviously (i) $\Rightarrow$ (ii) and by \cite[Theorem 2.4]{FJPLS},
(ii) $\Rightarrow$ (iii). By Theorem \ref{th:ComBanachsigma}, (iii)
$\Rightarrow$(i), whence (i), (ii), and (iii) are mutually equivalent. Also,
Since $A$ is commutative, the equivalence (ii) $\Leftrightarrow$ (iv) is
obvious from Theorem \ref{th:GH}.
\end{proof}

In the following corollary of Theorem \ref{th:basacom}, note that every
C-block $B\subseteq A$ and every bicommutant $CC(M)$ of a commutative
subset $M\subseteq A$ qualifies as a norm-closed commutative sub-synaptic
algebra of $A$.

\begin{corollary} \label{co:basa}
Let $A$ be a Banach synaptic algebra and let $H$ be a norm-closed commutative
sub-synaptic algebra of $A$. Then $H$ is a Banach GH-algebra.
\end{corollary}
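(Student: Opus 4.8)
The plan is to reduce everything to Theorem~\ref{th:basacom}. The first step would be to observe that $H$, being a sub-synaptic algebra of $A$, is a synaptic algebra in its own right under the restrictions to $H$ of the partial order and the operations on $A$, with the same enveloping algebra $R$; this is exactly what was recorded in Section~\ref{sc:Additional}. I would then note that the order-unit norm of $H$, computed intrinsically, coincides with the restriction to $H$ of the order-unit norm of $A$: since $1\in H$ and the partial order on $H$ is the one inherited from $A$, for $v\in H$ the inequalities $-\lambda1\leq v\leq\lambda1$ hold in $H$ if and only if they hold in $A$, so the two infima defining $\|v\|$ agree.

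The second step is to bring in norm-completeness. By hypothesis $A$ is Banach, i.e.\ norm-complete, and $H$ is norm-closed in $A$; a norm-closed linear subspace of a Banach space is complete in the restricted norm, so $(H,\|\cdot\|)$ is norm-complete. By the first step this norm is the order-unit norm of the synaptic algebra $H$, and therefore $H$ is a Banach synaptic algebra.

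The final step invokes Theorem~\ref{th:basacom}. Since $H$ is commutative by hypothesis, that theorem applies with $A$ replaced by $H$; in particular its implication (iii)$\Rightarrow$(iv) shows that, because $H$ is Banach, $H$ is a GH-algebra. Combining this with the fact already established that $H$ is Banach, we conclude that $H$ is a Banach GH-algebra, as asserted. (The concluding remark in the statement, that every C-block $B\subseteq A$ and every bicommutant $CC(M)$ of a commutative subset $M$ is such an $H$, requires no further argument, since each of these is, by construction, a norm-closed commutative sub-synaptic algebra of $A$.)

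I do not expect any genuine obstacle here: the proof is a short concatenation of facts already available. The only point that deserves a moment's care is the verification in the first step that the ambient norm on $A$ restricts to the intrinsic order-unit norm of $H$, since without it the passage from ``norm-closed in $A$'' to ``Banach synaptic algebra'' would not be immediate; but this is settled at once by observing that $1\in H$ and that the order on $H$ is inherited from $A$.
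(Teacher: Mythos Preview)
Your proof is correct and follows essentially the same approach as the paper: deduce that $H$ is norm-complete because it is norm-closed in the Banach space $A$, then apply Theorem~\ref{th:basacom} to the commutative synaptic algebra $H$. Your additional care in verifying that the intrinsic order-unit norm on $H$ agrees with the restricted norm from $A$ is a detail the paper leaves implicit, but otherwise the arguments coincide.
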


\begin{proof} As  $A$ is norm complete and $H$ is norm-closed, it follows
that $H$ is norm-complete. Therefore, we can apply Theorem \ref{th:basacom}
to the commutative synaptic algebra $H$ and conclude that $H$ is a GH-algebra.
\end{proof}

\begin{theorem} \label{th:GHCblock}
If $A$ is a GH-algebra, then every C-block in $A$ is a commutative monotone
$\sigma$-complete Banach GH-algebra and $P$ is a $\sigma$-complete OML.
\end{theorem}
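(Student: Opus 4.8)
The plan is to establish each claimed property of a C-block $B$ in a GH-algebra $A$ in turn, leaning on results already assembled in the excerpt. First I would recall that a GH-algebra is, by Theorem \ref{th:GH}, a synaptic algebra in which every bounded monotone increasing sequence of pairwise commuting elements has a supremum, and that by Corollary \ref{co:MsigmaComisGH} (together with \cite[Theorem 2.4]{FJPLS}) a GH-algebra is automatically Banach. So $A$ is a Banach synaptic algebra and Theorem \ref{th:BanachSAprops} immediately tells us that every C-block $B$ in $A$ is a commutative Banach synaptic algebra that is Dedekind and monotone $\sigma$-complete, and that $B\cap P$ is a $\sigma$-complete Boolean algebra. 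That already delivers most of the C-block assertion.

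Next I would upgrade ``commutative Banach synaptic algebra'' to ``commutative monotone $\sigma$-complete Banach GH-algebra.'' Since $B$ is commutative and Banach, Theorem \ref{th:basacom} applies: $B$ is Dedekind $\sigma$-complete, monotone $\sigma$-complete, and a GH-algebra. (Alternatively, one can observe that $B = C(B)$ is a norm-closed commutative sub-synaptic algebra of the Banach synaptic algebra $A$ and invoke Corollary \ref{co:basa} to get that $B$ is a Banach GH-algebra, then Theorem \ref{th:basacom} for the monotone $\sigma$-completeness.) Either route gives precisely that $B$ is a commutative monotone $\sigma$-complete Banach GH-algebra, and $B\cap P$ being $\sigma$-complete is restated from Theorem \ref{th:BanachSAprops} or follows from Theorem \ref{th:ComBanachsigma} applied to $B$.

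Finally, for the assertion that $P$ is a $\sigma$-complete OML: since $A$ is Banach, this is exactly Corollary \ref{co:Psigmacomplete}. Alternatively, one can argue block-by-block — every block in $P$ is of the form $P\cap B$ for a unique C-block $B$ (Theorem \ref{th:uniqueblock}), each such $P\cap B$ is a $\sigma$-complete Boolean algebra by the previous paragraph, and then Theorem \ref{th:Pcomplete}(ii) (in its $\sigma$-complete form) promotes ``every block of $P$ is $\sigma$-complete'' to ``$P$ is a $\sigma$-complete OML.''

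I do not anticipate a genuine obstacle here: the theorem is essentially a repackaging of Theorem \ref{th:BanachSAprops}, Theorem \ref{th:basacom}, and Corollary \ref{co:Psigmacomplete} once one notes (via Corollary \ref{co:MsigmaComisGH}) that a GH-algebra is Banach. The only point requiring a moment's care is making sure the hypothesis ``$A$ is a GH-algebra'' is used only to conclude $A$ is Banach — the monotone $\sigma$-completeness of the C-blocks comes from the Banach property, not from the stronger GH-condition on all commuting sequences — and that the word ``GH-algebra'' reappears for $B$ because $B$ is itself commutative and Banach, hence monotone $\sigma$-complete, hence (by Theorem \ref{th:GH}) a GH-algebra.
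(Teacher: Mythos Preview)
Your argument hinges on the claim that a GH-algebra $A$ is automatically Banach, which you attribute to Corollary~\ref{co:MsigmaComisGH} together with \cite[Theorem~2.4]{FJPLS}. But Corollary~\ref{co:MsigmaComisGH} goes in the opposite direction: its hypothesis is that $A$ is monotone $\sigma$-complete (every bounded increasing sequence has a supremum), and its conclusion is that $A$ is a Banach GH-algebra. The GH-condition of Theorem~\ref{th:GH} only demands suprema for bounded increasing sequences of \emph{pairwise commuting} elements, which is weaker, and nothing in this paper establishes that a general (non-commutative) GH-algebra is norm complete. So the chain ``$A$ is GH $\Rightarrow$ $A$ is Banach $\Rightarrow$ Theorem~\ref{th:BanachSAprops} applies'' is not justified here; indeed, the statement of Theorem~\ref{th:GHCblock} asserts only that each C-block is Banach, not that $A$ is.

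The paper's proof sidesteps this by using the GH-property of $A$ directly. Given a C-block $B$ and a bounded increasing sequence $(b_n)$ in $B$, commutativity of $B$ makes $(b_n)$ a pairwise commuting sequence in $A$, so the GH-hypothesis yields a supremum $a$ in $A$; Theorem~\ref{th:CC(T)} then forces $a\in CC((b_n)_{n\in\Nat})\subseteq CC(B)=B$, so $a$ is also the supremum in $B$ and $B$ is monotone $\sigma$-complete. Now Theorem~\ref{th:basacom}, applied to the commutative synaptic algebra $B$, gives that $B$ is Banach and a GH-algebra. For the $\sigma$-completeness of $P$, the paper invokes Corollary~\ref{co:everyBsigmacom} (every C-block Dedekind $\sigma$-complete $\Rightarrow$ $P$ $\sigma$-complete) rather than Corollary~\ref{co:Psigmacomplete}, precisely because the Banach property of $A$ itself is not available. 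Your closing paragraph pinpoints exactly the step that breaks: the GH-hypothesis on $A$ is not used merely to make $A$ Banach, but to push suprema of commuting sequences back into the C-block via $CC(B)=B$.
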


\begin{proof}
Suppose $A$ is a GH-algebra, let $B$ be a C-block of $A$, and let
$(b\sb{n})\sb{{n\in\Nat}}$ be a bounded monotone increasing sequence
in the commutative synaptic algebra $B$. Then $(b\sb{n})\sb{{n\in\Nat}}$
is a bounded monotone increasing sequence of pairwise commuting elements in
$A$, so it has a supremum $a\in A$, whence by Theorem \ref{th:CC(T)},
$a\in CC((b\sb{n})\sb{n\in\Nat})\subseteq CC(B)=B$. Thus, $B$ is a
commutative monotone $\sigma$-complete synaptic algebra, hence it is a
GH-algebra. By Corollary \ref{co:everyBsigmacom}, $P$ is $\sigma$-complete.
\end{proof}

\begin{remark}
Even if every C-block $B$ in the synaptic algebra $A$ is monotone
$\sigma$-complete, we cannot conclude that $A$ is a GH-algebra because the
supremum in a C-block $B\subseteq A$ of a sequence $(b\sb{n})\sb{{n\in\Nat}}$
in $B$ is not necessarily the supremum of the sequence in $A$.
\end{remark}

We conclude this section by reviewing two functional representation theorems
for a commutative GH-algebra $A$. By Theorem \ref{th:basacom}, $A$
is a Dedekind $\sigma$-complete Banach synaptic algebra and $P$ is a
$\sigma$-complete Boolean algebra. Thus, as is well-known, the Stone space
$X$ of $P$ is not only totally disconnected, but it is basically disconnected,
i.e., the closure of any open F$\sb{\sigma}$ subset of $X$ remains open.
As in the proof of Theorem \ref{th:ComBanachsigma}, the lattice ordered
commutative associative unital Banach algebra $C(X,\reals)$ with pointwise
partial order and pointwise operations, and with the supremum norm, of all
continuous real-valued functions on $X$ is a GH-algebra; moreover, $A$ is
isomorphic (as a synaptic algebra) to $C(X,\reals)$. A disadvantage of this
functional representation is that the supremum of an increasing sequence in
$C(X,\reals)$ is not necessarily the pointwise supremum.

A second functional representation of the commutative GH-algebra $A$
avoids the disadvantage mentioned above. This representation involves a
so-called gh-tribe on the Stone space $X$ of $P$. By \cite[Definition 6.3]
{FJPLS}, a \emph{gh-tribe} on $X$ is a set ${\mathcal T}$ of bounded
real-valued functions on $X$ such that: (1) the constant functions 0 and 1
belong to ${\mathcal T}$; (2) ${\mathcal T}$ is closed under pointwise sums
and real multiples of its elements; and (3) ${\mathcal T}$ is closed under
pointwise suprema of sequences of its elements which are bounded above by an
element of ${\mathcal T}$. It turns out that such a gh-tribe ${\mathcal T}$
is closed under pointwise multiplication (\cite[Lemma 6.4]{FJPLS}) and it
is a GH-algebra. Moreover, by \cite[Theorem 6.6]{FJPLS}, there exists a gh-tribe
${\mathcal T}$ on $X$ such that $C(X,\reals)\subseteq{\mathcal T}$ and there
exists a surjective synaptic morphism $h$ of GH-algebras from ${\mathcal T}$
onto $A$. The morphism $h$ has the property that, for $f,g\in{\mathcal T}$,
$h(f)=h(g)$ iff $\{x\in X:f(x)\not=g(x)\}$ is a meager subset of $X$, i.e.,
it is a countable union of nowhere dense sets. This representation by a
gh-tribe can be regarded as an analogue of the classical Loomis-Sikorski
theorem for $\sigma$-complete Boolean algebras.


\begin{thebibliography}{99}

\bibitem{Alf} Alfsen, E.M., \emph{Compact Convex Sets and Boundary
Integrals}, Springer-Verlag, New York, 1971, ISBN 0-387-05090-6.

\bibitem{AlfSchSto} Alfsen, E., Schultz, F., and St{\o}rmer, E., A
Gelfand-Neumark theorem for Jordan algebras, \emph{Advances in Math.}
{\bf 28} (1978) 11--56.

\bibitem{Beran} Beran, L., {\em Orthomodular Lattices, An Algebraic
Approach}, Mathematics and its Applications, Vol. 18, D. Reidel Publishing
Company, Dordrecht, 1985.

\bibitem{SKB} Berberian, Sterling K., \emph{Baer *-rings}. Die Grundlehren
der mathematischen Wissenschaften, Band 195. Springer-Verlag, New York-Berlin,
1972.

\bibitem{FSyn} Foulis, D.J., Synaptic algebras, Math. Slovaca {\bf 60} (2010)
631--654.

\bibitem{FandB} Foulis, D.J. and Bennett, M.K., Effect algebras and
unsharp quantum logics, \emph{Found. Phys.} {\bf 24} (1994) 1331--1352.

\bibitem{GHAlg1} Foulis, D.J. and Pulmannov\'{a}, S., Generalized
Hermitian Algebras, \emph{Int. J. Theor. Phys.} {\bf 48}, No. 5 (2009)
1320--1333.

\bibitem{FPSpin} Foulis, D.J. and Pulmannov\'{a}, S., Spin factors as
generalized Hermitian algebras, \emph{Found. Phys.} {\bf 39} (2009)
237--255.

\bibitem{GHAlg2} Foulis, D.J. and Pulmannov\'{a}, S., Regular elements
in generalized Hermitian Algebras, \emph{Math. Slovaca} {\bf 61}, no. 2,
(2011) 155--172.

\bibitem{FPproj} Foulis, D.J. and Pulmannov\'a, S., Projections in a synaptic
algebra, \emph{Order} {\bf 27} (2010) 235--257.

\bibitem{FPtype} Foulis, D.J. and Pulmannov\'a, S., Type-decomposition of a synaptic
algebra, \emph{Found. Phys.} {\bf 43}, no 8 (2013) 948--968.

\bibitem{FPsym} Foulis, D.J. and Pulmannov\'a, S.,, Symmetries in synaptic algebras,
\emph{Math. Slovaca}, {\bf 64}, no. 3 (2014) 751--776.

\bibitem{FPcom} Foulis, D.J. and Pulmannov\'a, S., Commutativity in a synaptic algebra,
\emph{Math. Slovaca}, {\bf 66}, no. 2 (2016) 469--482.

\bibitem{FJP2proj} Foulis, D.J., Jen\v cov\'a, A., and Pulmannov\'a, S., Two
projections in a synaptic algebra, \emph{Linear Algebra Appl.} {\bf 478} (2015)
163--287.

\bibitem{FJPpande} Foulis, D.J., Jen\v{c}ov\'a, A., and Pulmannov\'a, S., A
projection and an effect in a synaptic algebra, \emph{Linear Algebra Appl.}
{\bf 485} (2015) 417-–441.

\bibitem{vectlat} Foulis, D.J., Jen\v{c}ov\'a, A., and Pulmannov\'a, S., Vector
lattices in synaptic algebras, arXiv:1605.06987 [math.RA].

\bibitem{FJPstat}  Foulis, D.J., Jen\v cov\'a, A., and  Pulmannov\'a,
S., States and synaptic algebras, \emph{Rep. Math. Phys.}{\bf 79}(2017) 13--32. ArXiv:1605.06987[math-ph].

\bibitem{FJPMSR} Foulis, D.J., Jen\v cov\'a, A., and  Pulmannov\'a,
S., Every synaptic algebra has the monotone square root property, to
appear in \emph{Positivity}.

\bibitem{FJPLS} Foulis, D.J., Jen\v cov\'a, A., and  Pulmannov\'a,S.,
A Loomis-Sikorski theorem and functional calculus for a generalized
Hermitian algebra, \emph{REp. Math. Phys.} submitted,

\bibitem{GandN} Gelfand, I. and Neumark, M., On the imbedding of normed rings into
the ring of operators in Hilbert space, \emph{Rec. Math. {\rm[}Mat. Sbornik{\rm}]
N.S.} {\bf 12} (1943) 197-–213.

\bibitem{GH} Givant, S. and Halmos, P., \emph{Introduction to Boolean
algebras,} Undergraduate Texts in Mathematics. Springer, New York, 2009.


\bibitem{GPBB} Gudder, S., Pulmannov\'{a}, S., Bugajski, S., and
Beltrametti, E., Convex and linear effect algebras, \emph{Rep. Math.
Phys.} {\bf 44}, No. 3 (1999) 359--379.

\bibitem{HHL} Handelman, D., Higgs, D., and Lawrence, J., Directed
abelian groups, countably continuous rings, and Rickart C*-algebras,
\emph{J. London Math. Soc.} (2) {\bf 21} (1980) 193--202.

\bibitem{SSH} Holland, Samuel S., Jr., An m-orthocomplete orthomodular
lattice is m-complete, \emph{Proc. Amer. Math. Soc.} {\bf 24} (1970)
716–-718.

\bibitem{Kalm} Kalmbach, G., {\em Orthomodular Lattices}, Academic
Press, Inc., London/New York, 1983.

\bibitem{Kap} Kaplansky, Irving, Projections in Banach algebras,
\emph{Annals of Mathematics} {\bf 53} (2), (1951) 235--249.

\bibitem{KapMod} Kaplansky, Irving, Modules over operator algebras,
\emph{Amer. J. Math.} {\bf 75} (1953) 839-–858.

\bibitem{McC} McCrimmon, K. \emph{A taste of Jordan algebras},
Universitext, Springer-Verlag, New York, 2004, ISBN: 0-387-95447-3.

\bibitem{Pid} Pulmannov\'a, S., A note on ideals in synaptic algebras,
\emph{Math. Slovaca} {\bf 62}, no. 6 (2012) 1091--1104.

\bibitem{SaWr} Sait\^o, K. and Wright, J.D.M., On defining AW*-algebras and
Rickart C*-algebras, ArXiv: 1501.02434v1[math.OA].

\bibitem{Sakai} Sakai, Sh{\^o}ichir{\^o} \emph{C*-algebras and W*-algebras},
Ergebnisse der Mathematik und ihrer Grenzgebiete, Band 60. Springer-Verlag,
New York-Heidelberg, 1971, xii+253 pp.

\bibitem{Sary} Sarymsakov, T. A., Ayupov, Sh. A., Khadzhiev, Dzh., and
Chilin, V. I. \emph{Uporyadochennye algebry} (Russian) [Ordered algebras]
``Fan'', Tashkent, 1983.

\bibitem{Stone} Stone, M. H., Boundedness properties in function-lattices,
\emph{Canadian J. Math.} {\bf 1} (1949) 176-–186.

\bibitem{Top65} Topping, D.M.,  \emph{Jordan Algebras of Self-Adjoint Operators},
A.M.S. Memoir {\bf No 53} AMS, Providence, Rhode Island, 1965.

\end{thebibliography}
\end{document}